	\def\?[#1]{\textbf{[#1]}\marginpar{\Large{\textbf{??}}}}%
\numberwithin{equation}{section}
\def\Im{\textrm{Im}}
\def\Re{\textrm{Re}}
\def\ov{\overline}
\def\11{{\rm 1~\hspace{-1.4ex}l} }
\def\R{\mathbb R}
\def\C{\mathbb C}
\def\Z{\mathbb Z}
\def\N{\mathbb N}
\def\T{\mathbb T}
\newcommand{\Id}{{\rm Id}}
\newcommand{\eps}{{\epsilon}}
\theoremstyle{plain}
\newtheorem{thm}{Theorem}
\newtheorem{prop}{Proposition}[section]
\newtheorem{cor}[prop]{Corollary}
\newtheorem{lemma}[prop]{Lemma}
\newtheorem{definition}[prop]{Definition}
\newtheorem{remark}[prop]{Remark}
\theoremstyle{definition}
\newtheorem{rem}[prop]{Remark}
\numberwithin{equation}{section}
\definecolor{darkred}{rgb}{0.7,0.1,0.1}
\definecolor{darkblue}{rgb}{0,0,0.7}
\title[Observability of Baouendi-Grushin-type equations]
{Observability of Baouendi-Grushin-type equations through resolvent estimates}
 \author[C. Letrouit]{Cyril Letrouit}
\address[C. Letrouit]{Sorbonne Universit\'e, Universit\'e Paris-Diderot SPC, CNRS, Inria, Laboratoire Jacques-Louis Lions, \'equipe CAGE, F-75005 Paris. \newline DMA, \'Ecole normale sup\'erieure, CNRS, PSL Research University, 75005 Paris.}
\email{letrouit@ljll.math.upmc.fr}
\author[C-M. Sun]{Chenmin Sun}
\address[C-M. Sun]{Universit\'e de Cergy-Pontoise, Laboratoire de Math\'ematiques AGM, UMR  8088 du CNRS, 2 av. Adolphe Chauvin
	95302 Cergy-Pontoise Cedex, France.}
\email{chenmin.sun@u-cergy.fr}
\begin{document}    

\maketitle

\begin{abstract}
In this article, we study the observability (or, equivalently, the controllability) of some subelliptic evolution equations depending on their step. This sheds light on the speed of propagation of these equations, notably in the ``degenerated directions'' of the subelliptic structure. \\
First, for any $\gamma\geq 1$, we establish a resolvent estimate for the Baouendi-Grushin-type operator $\Delta_\gamma=\partial_x^2+|x|^{2\gamma}\partial_y^2$, which has step $\gamma+1$. We then derive consequences for the observability of the Schr\"odinger type  equation $i\partial_tu-(-\Delta_\gamma)^{s}u=0$ where $s\in\N$. We identify three different cases: depending on the value of the ratio $(\gamma+1)/s$, observability may hold in arbitrarily small time, or only for sufficiently large times, or even fail for any time. \\
As a corollary of our resolvent estimate, we also obtain observability for heat-type equations $\partial_tu+(-\Delta_\gamma)^su=0$ and establish a decay rate for the damped wave equation associated with $\Delta_{\gamma}$. 
\bigskip

\textbf{Keywords:} Observability, Subelliptic equations, Schr\"odinger equation, Resolvent estimates.

\bigskip

\textbf{2020 Mathematics Subject Classification:} 93B07, 35H20, 35J10, 35P10, 81Q20.
\end{abstract}


\section{Introduction and main results}

\subsection{Motivation} 
This paper addresses some issues related to the controllability and observability properties of evolution equations built on subelliptic Laplacians (or sub-Laplacians). 

Given a manifold $M$, a small subset $\omega\subset M$, a time $T>0$ and an operator $P$ (which depends on $t\in\R$ and $x\in M$), the study of controllability consists in determining whether, for any initial state $u_0$ and any final state $u_1$, there exists $f$ such that the solution of the equation
\begin{equation} \label{e:control}
Pu=\mathbf{1}_\omega f, \qquad u_{|t=0}=u_0
\end{equation}
in $M$ is equal to $u_1$ at time $T$. Of course, the functional spaces in which $u_0,u_1,f$ and the solution $u$ live have to be specified. By duality (the Hilbert Uniqueness Method, see \cite{Li88}), this controllability property is generally equivalent to some inequality of the form
\begin{equation*}
\|u(T)\|^2_X\leq C_{T,\omega}\int_0^T \|\mathbf{1}_\omega u(t)\|^2_{X}dt, \qquad \forall u_0\in X
\end{equation*}
where $u$ is the solution of $Pu=0$ with initial datum $u_0\in X$. This is called an observability inequality. In other words, controllability holds if and only if any solution of \eqref{e:control} with $f=0$ can be detected from $\omega$, in a ``quantitative way'' which is measured by the constant $C_{T,\omega}$.

Observability inequalities can be established under various assumptions and using different techniques depending on the operator $P$. In this paper, $P$ will be either a wave, a heat or a Schr\"odinger operator associated to some subelliptic Laplacian $\Delta_\gamma$, and, to establish observability inequalities, we will mainly use resolvent estimates.  They consist in a quantitative measurement of how much approximate solutions (also named quasimodes) of $\Delta_\gamma$ can concentrate away from $\omega$, and in particular resolvent estimates do not involve the time variable, at least in this context. See for example \cite{BZ04} and \cite{Mi12} for detailed studies about the link between observability and resolvent estimates.

Since the study of the controllability/observability properties of evolution equations driven by sub-Laplacians in full generality seems out of reach, in this paper we focus on a particular family of models, which we now describe.

Let $M=(-1,1)_x\times\T$, where $\T$ is the 1D torus in the $y$-variable and let $\gamma\geq 0$. We consider the Baouendi-Grushin-type sub-Laplacian $\Delta_\gamma=\partial_x^2+|x|^{2\gamma}\partial_y^2$, together with the domain
\begin{equation*}
D(\Delta_\gamma)=\{u\in \mathcal{D}'(M) : \partial_x^2 u, |x|^{2\gamma}\partial_y^2u\in L^2(M) \text{ and } u_{|\partial M}=0\}.
\end{equation*}
By H\"ormander's theorem, in the case where $\gamma\in\N$, $\Delta_\gamma$ is subelliptic, since $\partial_y$ can be obtained by taking $\gamma$ times the bracket of $\partial_x$ with $x^\gamma\partial_y$.

The observation region $\omega$ that we consider is assumed to contain a horizontal strip $(-1,1)_x\times I_y$ where $I\subset \mathbb{T}$ is a non-empty open interval of the 1D-torus. This choice for $\omega$ is natural if one is interested in understanding the specific features of propagation in the subelliptic directions (here, the vertical $y$-axis), see Section \ref{s:speed} below; this choice for $\omega$ has already been made in different but related subelliptic frameworks, see for example \cite{K17}, \cite{BuSun}, \cite{FKL}. We note, and we will come back to this point later in our analysis, that $\omega$ does not satisfy the Geometric Control Condition, which is known to be equivalent to observability of elliptic waves (see \cite{BLR}) and to imply the observability of the elliptic Schr\"odinger equation in any time (see \cite{Le92}). Several other choices for $\omega$ could have been made (see \cite{BCG} for example).

\subsection{Main results.} \label{s:mainresultssection} Our first main result is a resolvent estimate in the case $\gamma\geq 1$, which reads as follows:
\begin{thm} \label{t:resgamma}
Let $\gamma\in\R$, $\gamma\geq 1$ and let $\omega$ contain a horizontal strip $(-1,1)\times I$.  There exist $C, h_0>0$ such that for any $v\in D(\Delta_\gamma)$ and any $0<h\leq h_0$, there holds 
\begin{equation} \label{e:resgamma}
 \|v\|_{L^2(M)}\leq C(\|v\|_{L^2(\omega)}+h^{-(\gamma+1)}\|(h^2\Delta_\gamma+1)v\|_{L^2(M)}).
\end{equation}
\end{thm}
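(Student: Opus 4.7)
The plan is to exploit the $y$-translation invariance of $\Delta_\gamma$ via the joint spectral decomposition in $y$-Fourier modes and $x$-eigenfunctions of the 1D family $P_n := -\partial_x^2 + n^2|x|^{2\gamma}$ on $(-1,1)$ with Dirichlet conditions. Writing $v = \sum_{n,k} c_{n,k} \varphi_{n,k}(x) e^{iny}/\sqrt{2\pi}$, where $\varphi_{n,k}$ is the $k$-th normalized eigenfunction of $P_n$ with eigenvalue $\lambda_{n,k}$, one has the Parseval identities $\|v\|_{L^2(M)}^2 = \sum|c_{n,k}|^2$ and $\|(h^2\Delta_\gamma+1)v\|_{L^2(M)}^2 = \sum(1-h^2\lambda_{n,k})^2|c_{n,k}|^2$. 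I would then split $v = w + z$, where $w$ is the projection of $v$ onto the \emph{resonant subspace} indexed by $R_h := \{(n,k) : |1 - h^2\lambda_{n,k}| < h^{\gamma+1}\}$. For the non-resonant part one automatically has $\|z\|_{L^2(M)} \le h^{-(\gamma+1)}\|(h^2\Delta_\gamma+1)v\|_{L^2(M)}$, which matches the last term in \eqref{e:resgamma}.

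It then suffices to prove an observability inequality $\|w\|_{L^2(M)} \le C \|w\|_{L^2(\omega)}$ uniformly in $h$, since $\|w\|_{L^2(\omega)} \le \|v\|_{L^2(\omega)} + \|z\|_{L^2(\omega)}$ and the $z$ term is absorbed into the resolvent term. The structure of $R_h$ is dictated by the subelliptic scaling of $P_n$: the rescaling $x = |n|^{-1/(\gamma+1)}\xi$ identifies $P_n$ with $|n|^{2/(\gamma+1)}(-\partial_\xi^2+|\xi|^{2\gamma})$, so $\lambda_{n,k} \sim |n|^{2/(\gamma+1)}\mu_k$, where $\{\mu_k\}_{k\ge 1}$ is the spectrum of the universal operator $-\partial_\xi^2 + |\xi|^{2\gamma}$ on $\R$. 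The resonance condition $h^2\lambda_{n,k}\approx 1$ then selects, for each $k$, only $O(1)$ integers $n$ clustering around $|n| \approx (h\sqrt{\mu_k})^{-(\gamma+1)}$, so that the dominant resonant modes live at $|n| \sim h^{-(\gamma+1)}$, which is precisely the scale encoded in the $h^{-(\gamma+1)}$ loss. The corresponding eigenfunctions are $|n|^{1/(2(\gamma+1))}$-dilates of the fixed profiles on $\R$, concentrated near $x=0$ on a window of width $h^{1/(\gamma+1)}$ yet of unit $L^2(-1,1)$-mass.

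The main obstacle is proving observability on the strip $\omega \supset (-1,1) \times I$ with $I\subsetneq \T$ for this resonant subspace, with constant uniform in $h$. The $y$-modes $e^{iny}$ appearing in $w$ are not mutually orthogonal on $I$, so one cannot simply sum contributions of each $(n,k)\in R_h$; this is where the subelliptic character of $\Delta_\gamma$, combined with the choice of a horizontal strip (which notably does not satisfy the Geometric Control Condition), makes the analysis delicate. I would proceed by contradiction: a failure would yield a bounded sequence $w^{(\ell)}$ with $\|w^{(\ell)}\|_{L^2(M)}=1$, $\|w^{(\ell)}\|_{L^2(\omega)}\to 0$ and spectral support in $R_{h_\ell}$; passing to a subsequence, one extracts a two-microlocal semiclassical defect measure at the critical Grushin scale $|n|\sim h^{-(\gamma+1)}$ which vanishes on $\omega$, is invariant under the degenerate $y$-Hamiltonian flow of $\Delta_\gamma$ (in the spirit of the two-microlocal analysis of \cite{BuSun, K17}), and is nevertheless forced by the spectral constraint to charge the strip $(-1,1)\times I$, a contradiction. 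The technical heart is the construction of this two-microlocal measure, and the careful matching of the $h^{1/(\gamma+1)}$ transverse concentration of the resonant eigenfunctions with the $h^{-(\gamma+1)}$ resolvent loss — which is exactly what pins down the sharp exponent $\gamma+1$ in the theorem.
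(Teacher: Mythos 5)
Your reduction to a uniform observability inequality on the resonant subspace $R_h=\{|1-h^2\lambda_{n,k}|<h^{\gamma+1}\}$ is valid and essentially equivalent to the paper's contradiction setup, and your identification of $|n|\sim h^{-(\gamma+1)}$ as the critical scale responsible for the exponent is correct. However, there is a genuine gap in the claim that the resonant modes are ``dominated'' by that scale and can be handled by a single two-microlocal defect measure. The resonance condition $h^2|n|^{2/(\gamma+1)}\mu_k\approx 1$ selects, as $k$ varies, modes with $|n|$ ranging over the \emph{entire} interval from $O(1)$ up to $O(h^{-(\gamma+1)})$, and these different ranges exhibit qualitatively different dynamics. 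A standard semiclassical defect measure argument (invariance under the bicharacteristic flow of $\xi^2+|x|^{2\gamma}\eta^2$, plus geometric control) only closes the argument in the intermediate range $|D_y|\sim h^{-1}$, i.e.\ $|\eta|\in(b_0,b_0^{-1})$ after semiclassical rescaling; this is exactly the paper's Section \ref{s:gcc}. In the two remaining regimes the strip $\omega$ is \emph{not} reached by the relevant trajectories, so ``invariance of the measure'' plus ``vanishing on $\omega$'' does not yield a contradiction by itself, and your assertion that the measure is ``forced by the spectral constraint to charge the strip'' is precisely the statement that needs proof.

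Concretely: (i) in the degenerate regime $|D_y|\gg h^{-1}$ the mass concentrates on $\{x=0\}$ (at width $|n|^{-1/(\gamma+1)}$, which is $h$ — not $h^{1/(\gamma+1)}$ — when $|n|\sim h^{-(\gamma+1)}$) and moves arbitrarily slowly in $y$; the paper handles this with a positive-commutator (energy) argument based on $[\Delta_\gamma,x\partial_x+(\gamma+1)y\partial_y]=2\Delta_\gamma$ combined with an elliptic concentration estimate, not with a measure. (ii) In the horizontal regime $|D_y|\ll h^{-1}$ the defect measure can legitimately charge $\{\eta=0\}\cap\{y\notin I\}$, which consists of trajectories that never meet $\omega$; excluding this requires first a propagation estimate showing the mass cannot sit on $x=0$, and then a normal form conjugating $h^2\Delta_\gamma+1$ to the averaged operator $h^2(\partial_x^2+M\partial_y^2)+1$ with $M=\frac12\int_{-1}^{1}|x|^{2\gamma}dx$, so that one can invoke the resolvent/observability estimate for the flat torus Schr\"odinger operator with an arbitrary open control set (a nontrivial input from \cite{BZ04,AM14}). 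Neither of these two arguments is present in, or replaceable by, the single measure-theoretic step you propose, so as written the proof does not close.
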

\begin{remark} 
In \cite{LL20} (see Corollary 1.9), a resolvent estimate with an exponential cost (replacing the above polynomial cost $h^{-(\gamma+1)}$) was proved for any sub-Riemannian manifold of step $k$ and for any of its subsets $\omega$ of positive Lebesgue measure. It was shown to be sharp for the Baouendi-Grushin-type sub-Laplacian $\Delta_\gamma$ (with $\gamma+1=k$) and for any open set $\omega$ whose closure does not touch the line $\{x=0\}$. Our resolvent estimate is much stronger, but heavily relies on the particular geometric situation under study.
\end{remark}
\begin{remark}
From the proof of (2) of Theorem \ref{t:main}, the resolvent estimate \eqref{e:resgamma} is sharp 
in the sense that there exists a sequence of quasi-modes $v_h$ which saturates the inequality. Indeed, a better resolvent estimate than \eqref{e:resgamma}, together with \cite[Theorem 4]{BZ04}, would contradict the lack of observability for short times in Point (2) of Theorem \ref{t:main} (see the argument after Theorem \ref{abstract} in Section \ref{s:locobssection}).

Furthermore, the conclusion of Theorem \ref{t:resgamma} does not apply to the case $\gamma<1$, at least if we remove the boundary. For example, when $\gamma=0$ and $\Delta_0$ is the usual Laplace operator on the torus $\T^2$, it follows from \cite{BLR} that the resolvent estimate \eqref{e:resgamma} with order $O(h^{-1})$ cannot hold if $\omega$ does not satisfy the geometric control condition with respect to the geodesic flow.
\end{remark}

In this paper, we will explore the consequences of this resolvent estimate for the observability of evolution equations driven by $\Delta_\gamma$.

Let us consider the  Schr\"odinger-type equation with Dirichlet boundary conditions
\begin{equation} \label{e:schrodfrac}
\left\lbrace \begin{array}{l}
i\partial_tu-(-\Delta_\gamma)^{s}u=0 \\
u_{|t=0}=u_0 \in L^2(M) \\
u_{|x=\pm 1}=0
\end{array}\right.
\end{equation}
where $s\in\N$ is a fixed integer and $\gamma\geq 0$, $\gamma\in \R$. 
Here $(-\Delta_{\gamma})^s$ is defined ``spectrally'' by its action on eigenspaces of the operator $\Delta_{\gamma}$ associated with Dirichlet boundary conditions. In other words, by classical embedding theorems (recalled in Lemma \ref{embedding}), $(\Delta_\gamma,D(\Delta_\gamma))$ has a compact resolvent, and thus there exists an orthonormal Hilbert basis of eigenfunctions $(\varphi_j)_{j\in\N}$ such that $-\Delta_\gamma\varphi_j=\lambda_j^2\varphi_j$, with the $\lambda_j$ sorted in increasing order. The domain of $(-\Delta_{\gamma})^s$ is given by
\begin{equation}  \label{e:domainpowers} 
D((-\Delta_{\gamma})^s)=\{u\in L^2(M): \sum_{j\in\N} \lambda_j^{4s}|( u,\varphi_j)_{L^2(M)}|^2<\infty   \}.
\end{equation}
Note that a function $u$ in $ D((-\Delta_\gamma)^s)$ verifies the boundary conditions
\begin{equation} \label{e:bdrycond}
(-\Delta_{\gamma})^ku|_{\partial M}=0, \qquad \text{for any } 0\leq k<s-\frac{1}{4}.
\end{equation}
 In Appendix \ref{a:wellposed}, we prove this fact and we also show that \eqref{e:schrodfrac} is well-posed in $L^2(M)$. Of course, the solution of \eqref{e:schrodfrac} does not live in general in the energy space given by the form domain of $(-\Delta_\gamma)^s$, but only in $L^2(M)$.

Given an open subset $\widetilde{\omega}\subset M$, we say that \eqref{e:schrodfrac} is observable in time $T_0>0$ in $\widetilde{\omega}$ if there exists $C>0$ such that for any $u_0\in L^2(M)$, there holds
\begin{equation} \label{e:obsschrod}
\|u_0\|_{L^2(M)}^2\leq C\int_0^{T_0} \|e^{-it(-\Delta_\gamma)^s}u_0\|_{L^2(\widetilde{\omega})}^2 dt.
\end{equation}
Our second main result roughly says that observability holds if and only if the subellipticity, measured by the step $\gamma+1$, is not too strong compared to $s$:
\begin{thm} \label{t:main}
Assume that $\gamma\in\R$, $\gamma\geq 1$. Let $I\subsetneq \T_y$ be a strict open subset, and let $\omega=(-1,1)_x\times I$. Then, for $s\in\N$, we have:
\begin{enumerate}
\item If $\frac12(\gamma+1)<s$, \eqref{e:schrodfrac} is observable in $\omega$ for any $T_0>0$;
\item If  $\frac12(\gamma+1)=s$, there exists $T_{\inf}>0$ such that \eqref{e:schrodfrac} is observable in $\omega$ for $T_0$ if and only if $T_0\geq T_{\inf}$;
\item If $\frac12(\gamma+1)>s$, for any $T_0>0$, \eqref{e:schrodfrac} is not observable in $\omega$.
\end{enumerate}
Indeed, Points (1) and (2) hold under the weaker assumption that $\omega$ contains a horizontal band of the form $(-1,1)_x\times I$; and Point (3) holds under the weaker assumption that $M\setminus \omega$ contains an open neighborhood of some point $(x,y)\in M$ with $x=0$.
\end{thm}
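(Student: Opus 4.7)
The plan is to derive the positive results (1) and the positive direction of (2) from the resolvent estimate of Theorem \ref{t:resgamma} via an abstract observability-from-resolvent-estimate principle (Theorem \ref{abstract}), and to obtain the negative statements (3) and the negative direction of (2) by constructing explicit wave packets concentrating in the degenerate direction $\{x=0\}$.

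Setting $P=-\Delta_\gamma$ and $\lambda=h^{-2}$, the estimate \eqref{e:resgamma} rewrites, for $\lambda\ge\lambda_0$, as
$$\|v\|_{L^2(M)}\leq C\|v\|_{L^2(\omega)}+C\lambda^{(\gamma-1)/2}\|(P-\lambda)v\|_{L^2(M)}.$$
To lift this to $P^s$, I would use functional calculus based on the factorization $\mu^s-\tau=(\mu-\tau^{1/s})\sum_{k=0}^{s-1}\tau^{k/s}\mu^{s-1-k}$, which on spectral components of $P$ localized near $\tau^{1/s}$ is comparable to $s\,\tau^{(s-1)/s}(\mu-\tau^{1/s})$. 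After a dyadic decomposition of $v$ in the basis of eigenfunctions of $P$ to handle frequencies far from $\tau^{1/s}$, this yields
$$\|v\|_{L^2(M)}\leq C\|v\|_{L^2(\omega)}+C\,\tau^{\frac{\gamma+1}{2s}-1}\|(P^s-\tau)v\|_{L^2(M)}$$
for $\tau$ large. Inserting this into Theorem \ref{abstract} gives observability of \eqref{e:schrodfrac} in arbitrarily small time whenever $\tau^{(\gamma+1)/(2s)-1}$ vanishes at infinity, i.e.\ case (1) $\frac{\gamma+1}{2}<s$, and observability for $T_0\geq T_{\inf}$ whenever this factor is $O(1)$, i.e.\ the positive direction of case (2) $\frac{\gamma+1}{2}=s$.

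For the remaining statements, I would construct wave packets out of the Dirichlet eigenfunctions $\psi_{n,1}(x,y)=f_{n,1}(x)e^{iny}/\sqrt{2\pi}$ of $-\Delta_\gamma$ obtained by Fourier decomposition in $y$, where $f_{n,1}$ is the ground state of $-\partial_x^2+n^2|x|^{2\gamma}$ on $(-1,1)$ with Dirichlet conditions, with eigenvalue $\mu_{n,1}^2$. Rescaling $x=n^{-1/(\gamma+1)}\xi$ yields $\mu_{n,1}^2\sim c_\gamma n^{2/(\gamma+1)}$, and $f_{n,1}$ concentrates at scale $n^{-1/(\gamma+1)}$ around $x=0$. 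Fix $y_0\in\T\setminus I$, choose $N=n_0^\alpha$ with a suitable $\alpha\in(0,1)$, and form
$$u_0(x,y)=N^{-1/2}\sum_{|j|\leq N}\chi(j/N)\,e^{-ijy_0}\,\psi_{n_0+j,1}(x,y)$$
for a smooth bump $\chi$. Then $\|u_0\|_{L^2(M)}\asymp 1$ and $u_0$ is essentially supported in $(-1,1)_x\times\{|y-y_0|\lesssim N^{-1}\}$. Under $e^{-itP^s}$, the packet travels in $y$ with group velocity $v_{n_0}=\partial_n[\mu_{n,1}^{2s}]|_{n=n_0}\sim c_{\gamma,s}\,n_0^{\frac{2s}{\gamma+1}-1}$. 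In case (3), $v_{n_0}\to 0$, so for any fixed $T_0$ one can take $n_0$ large enough that the packet remains at positive distance from $I$ on $[0,T_0]$, contradicting \eqref{e:obsschrod}. In case (2), $v_{n_0}\to v_\infty>0$, and the same construction produces a positive lower bound on the observability time $T_{\inf}$.

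The main technical obstacle is the wave-packet analysis of Step 3: the packet must remain coherent over $[0,T_0]$ in spite of the dispersion coming from the second-order term in the Taylor expansion of $n\mapsto \mu_{n,1}^{2s}$. This spreading is of order $T_0\,N^2\,n_0^{\frac{2s}{\gamma+1}-2}$ and must stay much smaller than the packet width $N^{-1}$; the condition $\alpha<1-s/(\gamma+1)$ on the exponent $\alpha$ in $N=n_0^\alpha$ secures this in both cases (2) and (3). A secondary point is the $n$-dependence of $f_{n,1}$ inside the sum: one verifies that $f_{n_0+j,1}$ is close to $f_{n_0,1}$ uniformly for $|j|\leq N$ whenever $N\ll n_0$, e.g.\ by a perturbative analysis of the rescaled ODE.
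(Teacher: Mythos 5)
Your overall strategy for the positive statements coincides with the paper's (lift the resolvent estimate from $-\Delta_\gamma$ to $(-\Delta_\gamma)^s$ by an elliptic factorization, then feed it into Theorem \ref{abstract}), and your exponent bookkeeping $\tau^{\frac{\gamma+1}{2s}-1}$ is correct. However, there is a genuine gap at the end of that argument: Theorem \ref{abstract} only yields the observability inequality for the \emph{spectrally localized} data $\chi_0(P(h))u_0$, and your proposal jumps from there to observability of \eqref{e:schrodfrac} for arbitrary $u_0\in L^2(M)$. Passing from the localized inequality to the full one is not automatic: one must sum over dyadic frequencies $h=2^{-j}$ (controlling the commutators $[\psi(2^{-2j}\Delta_\gamma),\phi_1]$), which produces a weak observability inequality with a compact remainder $\|u_0\|_{H_\gamma^{-1}(M)}^2$, and this remainder is then removed by a compactness--uniqueness argument relying on the compact embedding $L^2(M)\hookrightarrow H_\gamma^{-1}(M)$ and on the unique continuation property for eigenfunctions of $\Delta_\gamma$ vanishing on $\omega$ (Garofalo's theorem). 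This is a full subsection of the paper and cannot be omitted; as written, your argument proves a weaker, frequency-localized statement.

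For the negative statements your construction is in the same spirit as the paper's but technically heavier, and in one place you are fighting a problem that is not there. The paper builds the packet from the ground state $\phi_\gamma$ of $-\partial_z^2+|z|^{2\gamma}$ on the whole line $\R$, transplanted into $(-1,1)$ by a cutoff (paying only an exponentially small source term), and sums over a \emph{full} dyadic block $k\sim h_n^{-1}$; the key observation is that in the critical case $s=\frac{\gamma+1}{2}$ the phase $\mu_0^s|k|^{2s/(\gamma+1)}=\mu_0^s|k|$ is exactly linear in $k$, so the packet translates rigidly and there is no dispersion to control — the estimate is then a straightforward non-stationary phase argument after Poisson summation. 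Your route through the genuine Dirichlet eigenfunctions $f_{n,1}$ and a narrow window $N=n_0^{\alpha}$ avoids the cutoff error but requires (i) asymptotics of $\mu_{n,1}^{2s}$ accurate enough to control first and second discrete differences in $n$ uniformly over the window, and (ii) the stability $f_{n_0+j,1}\approx f_{n_0,1}$ for $|j|\le N$; both are provable (the Dirichlet corrections are exponentially small by Agmon-type decay, and the rescaling factors differ by $O(N/n_0)$) but neither is free, and the paper explicitly designs its construction to avoid exactly these points. Your direct treatment of Point (3) via the vanishing group velocity $n_0^{\frac{2s}{\gamma+1}-1}\to 0$ is legitimate; the paper instead deduces Point (3) from the failure of short-time observability in the critical case via the abstract transmutation result of Miller, which is shorter. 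Finally, note that your case (2) argument only gives a lower bound on $T_{\inf}$; the "if and only if" in the statement also needs the large-time positive result from the first part, which in the critical case comes from $G(h)=O(h^{1-2s})$ being bounded rather than $o(h^{1-2s})$ — you do say this, but it again passes through the localized-to-full step flagged above.
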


Let us make several comments about this result:
\begin{itemize}
\item In the case $\frac12(\gamma+1)=s$, our proof only provides a lower bound on $T_{\inf}$ (see Remark \ref{r:lowertime}). The exact value of $T_{\inf}$ was explicitly computed in \cite{BuSun} in the case $\gamma=s=1$. It is an interesting problem to compute this exact value for $s,\gamma$ satisfying $s=\frac{1}{2}(\gamma+1)$, and more importantly, to give a geometric interpretation for this exact constant in a more general subelliptic setting.
\item  The number $\frac12(\gamma+1)$ appearing in Theorem \ref{t:main} is already known to play a key role in many other problems. Recall that the step of the manifold (defined as the least number of brackets required to generate the whole tangent space) is equal to $\gamma+1$ (when $\gamma\in\N$). Then, $2/(\gamma+1)$ is the exponent known as the gain of Sobolev derivatives in subelliptic estimates. Note that $\frac12(\gamma+1)$ is also the threshold found in the work \cite{BCG} which deals with observability of the heat equation with sub-Laplacian $\Delta_\gamma$, and that it is related to the growth of eigenvalues for the operator $-\partial_x^2+x^{2\gamma}$, see for example Section 2.3 in \cite{BCG}.
\item In the statement of Theorem \ref{t:main}, we took $s\in\N$ in order to avoid technical issues of non-local effects due to the fractional Laplacian. We expect that the statements in Theorem \ref{t:main} are also true for all $s>0$.
\item The assumption that $\gamma\geq 1$ for Points (1) and (2) is mainly due to the technical issue that the Hamiltonian flow associated with the symbol $\partial_x^2+|x|^{2\gamma}\partial_y^2$ may not be unique if $0< \gamma<1$ (see Section \ref{s:gcc}). Dealing with this case, and more generally addressing the question of propagation of singularities for metrics with lower regularity, is an open problem.
\end{itemize}
We now derive from Theorems \ref{t:resgamma} and \ref{t:main} two consequences. First, Theorem \ref{t:main} implies the following result about observability of heat-type equations associated to $\Delta_\gamma$ (which are well-posed, as proved in Appendix \ref{a:wellposed}):
\begin{cor} Assume that $\gamma\in\R$, $\gamma\geq 1$ and $\omega$ contains a horizontal strip $(-1,1)_x\times I$. For any $s\in\N$, $s>\frac12(\gamma+1)$ and any $T_0>0$, final observability for the heat equation with Dirichlet boundary conditions
\begin{equation} \label{e:heatgamma}
\left\lbrace \begin{array}{l}
\partial_tu+(-\Delta_\gamma)^s u=0 \\
u_{|t=0}=u_0 \in L^2(M) \\
u_{|x=\pm 1}=0
\end{array}\right.
\end{equation}
holds in time $T_0$. In other words, there exists $C>0$ such that for any $u_0\in L^2(M)$, there holds
\begin{equation*}
\|e^{-T_0(-\Delta_\gamma)^s}u_0\|_{L^2(M)}^2\leq \int_0^{T_0} \|e^{-t(-\Delta_\gamma)^s}u_0\|_{L^2(\omega)}^2dt.
\end{equation*}
\end{cor}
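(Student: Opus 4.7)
The plan is a two-step reduction: first, extract Schr\"odinger observability for the operator $A:=(-\Delta_\gamma)^s$ from Theorem \ref{t:main}; then convert it to final observability for the heat semigroup $e^{-tA}$ via a classical ``Schr\"odinger-to-heat'' transmutation.

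The operator $A$ is non-negative, selfadjoint on $L^2(M)$, and has compact resolvent. Since $s>\tfrac12(\gamma+1)$, Point (1) of Theorem \ref{t:main} applies to $\omega=(-1,1)_x\times I$ and yields, for every $T>0$, a constant $C_T>0$ such that
$$\|v_0\|_{L^2(M)}^2\le C_T\int_0^T\|e^{-itA}v_0\|_{L^2(\omega)}^2\,dt,\qquad\forall\, v_0\in L^2(M).$$

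I would then invoke the following abstract principle, due to Phung and Miller (see in particular Phung's \emph{Observability and control of Schr\"odinger equations}, SIAM J.\ Control Optim., 2001, and Miller's \emph{Controllability cost of conservative systems: resolvent condition and transmutation}, J.\ Funct.\ Anal., 2005): if $A\ge 0$ is selfadjoint with compact resolvent on a Hilbert space and $B$ is a bounded observation, then observability of the unitary group $e^{-itA}$ through $B$ in some positive time implies final observability of the heat semigroup $e^{-tA}$ through $B$ in every positive time. The underlying mechanism is a representation of short-time heat propagators as Poisson-kernel averages of Schr\"odinger propagators,
$$e^{-\eta A}=\frac{1}{\pi}\int_{\R}\frac{\eta}{\eta^2+\xi^2}\,e^{i\xi A}\,d\xi,$$
combined with the peeling identity $e^{-T_0 A}=e^{-(T_0-\eta)A}\circ e^{-\eta A}$ and a holomorphic truncation that localizes the $\xi$-integral to a bounded window on which Step 1 can be directly applied.

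The main technical point is precisely this truncation: the Poisson kernel decays only like $|\xi|^{-2}$, so an unlocalized use of Schr\"odinger observability would need observability on the whole real line, which Step 1 does not provide. The cure is the holomorphic/Gaussian truncation argument of Phung--Miller, which constitutes the technical core of the transmutation. Once this is in place, the corollary follows with an observability constant $C$ depending only on $T_0$, $\gamma$, $s$, and $|I|$. As an alternative route, one may derive a suitable spectral inequality from Theorem \ref{t:resgamma} and invoke the Lebeau--Robbiano strategy for $(-\Delta_\gamma)^s$, in which the hypothesis $s>\tfrac12(\gamma+1)$ enters as the condition that the spectral cost exponent be strictly smaller than $2s$.
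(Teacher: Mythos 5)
Your Step 1 is fine: Point (1) of Theorem \ref{t:main} does give observability of the unitary group $e^{-it(-\Delta_\gamma)^s}$ from $\omega$ in any positive time. The gap is in Step 2: the abstract principle you invoke --- that observability of $e^{-itA}$ through $B$ in some positive time implies final observability of $e^{-tA}$ for the \emph{same} generator $A$ --- is false, and the paper itself records the relevant counterexample. For $\gamma=s=1$ the Schr\"odinger group is observable from $\omega$ for $T\geq T_{\inf}$ (\cite{BuSun}, i.e.\ Point (2) of Theorem \ref{t:main}), yet by \cite{K17} the associated heat semigroup is not final-observable from $\omega$ in any time; the same phenomenon for the harmonic oscillator on $\R$ observed from a half-line is Proposition 3 of \cite{DM12}, which the authors cite precisely to stress this point. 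This is why the Poisson-kernel transmutation cannot be closed at the endpoint: the correct statement (Corollary 2 of \cite{DM12}) is that observability of $e^{itA}$ in some time yields final observability of $e^{-tA^\alpha}$ in every time only for $\alpha>1$ strictly, and no holomorphic or Gaussian truncation removes the strict inequality. Your argument, applied with $A=(-\Delta_\gamma)^s$, needs exactly the forbidden case $\alpha=1$. (Point (1) gives observability in arbitrarily small time rather than merely in some time, but the Phung--Miller/Duyckaerts--Miller results you cite do not upgrade that to the same-exponent heat conclusion either, and you would need to state and prove a different theorem to exploit it.)

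The hypothesis $s>\frac12(\gamma+1)$ is precisely what creates the room for $\alpha>1$, and the paper's one-line proof uses it in exactly this way: apply Corollary 2 of \cite{DM12} to the generator $A=(-\Delta_\gamma)^{\frac{\gamma+1}{2}}$, whose Schr\"odinger group is observable from $\omega$ in sufficiently large time by Point (2) of Theorem \ref{t:main}, and write $(-\Delta_\gamma)^s=A^{\alpha}$ with $\alpha=\frac{2s}{\gamma+1}>1$. Your ``alternative route'' is closer in spirit to a viable repair: feeding the resolvent estimate of Theorem \ref{t:resgamma}, with its polynomial cost $h^{-(\gamma+1)}$, into the resolvent-condition theorems of \cite{DM12} yields final observability of $e^{-t(-\Delta_\gamma)^s}$ exactly when the parabolic smoothing at frequency $h^{-1}$ beats that cost, i.e.\ when $2s>\gamma+1$. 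But as written, your main argument does not prove the corollary.
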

This is a direct consequence of Corollary 2 in \cite{DM12} and Point (2) of Theorem \ref{t:main}. Note also that observability for \eqref{e:heatgamma} fails for any time if $\gamma=s=1$ (see \cite{K17}), so that we cannot expect that an analogue of Point (2) of Theorem \ref{t:main} holds for heat-type equations. This last fact - observability of a Schr\"odinger semigroup while the associated heat semigroup is not observable - gives an illustration of Proposition 3 of \cite{DM12} (which states that the same phenomenon occurs for the harmonic oscillator on the real line observed in a set of the form $(-\infty,x_0)$, $x_0\in\R$).

Finally, Theorem \ref{t:resgamma} also implies a decay rate for the damped wave equation associated to $\Delta_\gamma$. To state it, we introduce the following adapted Sobolev spaces: for $k=1,2$,
\begin{equation*}
H_{\gamma}^k(M)=\{v\in \mathcal{D}'(M), \ (-\Delta_\gamma+1)^{k/2}v\in L^2(M)\}, \qquad \|v\|_{H_{\gamma}^k(M)}=\|(-\Delta_\gamma+1)^{k/2}v\|_{L^2(M)}
\end{equation*}
and 
$H_{\gamma,0}^1(M)$ is the completion of $C_c^\infty(M)$ for the norm $\|\cdot\|_{H_\gamma^1(M)}$. 

Let $b\in L^{\infty}(M), b\geq 0$ such that $\inf_{q\in\ov{\omega}}b(q)>0$.  On the space $\mathcal{H}:=H_{\gamma,0}^1(M)\times L^2(M)$, the operator $$\mathcal{A}=\begin{pmatrix} 0 & 1 \\ \Delta_\gamma & -b\end{pmatrix}$$ with domain $D(\mathcal{A})=(H_{\gamma}^2(M)\cap H_{\gamma,0}^1(M))\times H_{\gamma,0}^1(M)$ generates a bounded semigroup (from the Hille-Yosida theorem) and the damped wave equation 
\begin{equation} \label{e:damped}
(\partial_t^2-\Delta_\gamma+b\partial_t)u=0
\end{equation}
with Dirichlet boundary conditions and given initial datum $(u_0,u_1)\in\mathcal{H}$ admits a unique solution $u\in C^0(\R^+;H_{\gamma,0}^1(M))\cap C^1(\R^+; L^2(M))$, see Appendix \ref{a:wellposed}.
\begin{cor} \label{c:damped}
Assume $\gamma\in\R$, $\gamma\geq 1$ and $\omega$ contains a horizontal strip $(-1,1)_x\times I$. There exists $C>0$ such that, for any $(u_0,u_1)\in D(\mathcal{A})$, the solution $u(t)$ of \eqref{e:damped}
with initial conditions $(u,\partial_tu)_{|t=0}=(u_0,u_1)$ satisfies
\begin{equation} \label{e:decayrate}
E(u(t),\partial_tu(t))^{\frac12}\leq \frac{C}{t^{\frac{1}{2\gamma}}} E(\mathcal{A}(u_0,u_1))^{\frac12}
\end{equation}
for any $t\geq 1$, where
\begin{equation*}
E(v,w)=\|\partial_xv\|^2_{L^2(M)}+\||x|^\gamma\partial_yv\|^2_{L^2(M)}+\|w\|^2_{L^2(M)}.
\end{equation*}
In particular, $E(u(t),\partial_tu(t))\rightarrow 0$ as $t\rightarrow +\infty$.
\end{cor}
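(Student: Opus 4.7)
The plan is to reduce to a polynomial resolvent bound on $\mathcal{A}$ via the Borichev–Tomilov theorem, and then derive this bound from Theorem \ref{t:resgamma}. Recall that Borichev–Tomilov states that for a bounded $C_0$-semigroup $e^{t\mathcal{A}}$ on a Hilbert space with $i\mathbb{R}\subset\rho(\mathcal{A})$, one has $\|e^{t\mathcal{A}}\mathcal{A}^{-1}\|_{\mathcal{H}}\leq Ct^{-1/\alpha}$ if and only if $\|(i\tau-\mathcal{A})^{-1}\|_{\mathcal{H}}\leq C|\tau|^{\alpha}$ as $|\tau|\to\infty$. After rewriting the left-hand side as $\|e^{t\mathcal{A}}(u_0,u_1)\|_{\mathcal{H}}$ via the substitution $(u_0,u_1)=\mathcal{A}^{-1}(u_0',u_1')$, the corollary reduces to proving $i\mathbb{R}\subset\rho(\mathcal{A})$ together with $\|(i\tau-\mathcal{A})^{-1}\|_{\mathcal{H}}\lesssim|\tau|^{2\gamma}$.

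The spectral inclusion is verified as follows. If $(v,w)\in D(\mathcal{A})$ with $(i\tau-\mathcal{A})(v,w)=0$, then $w=i\tau v$ and $P_\tau v:=(-\Delta_\gamma-\tau^2+i\tau b)v=0$. Pairing with $\bar v$ and taking imaginary parts gives $\tau\int b|v|^2=0$, so for $\tau\neq 0$ the function $v$ vanishes on $\{b>0\}\supset\omega$; since $\omega$ contains a horizontal strip meeting both sides of $\{x=0\}$ and $\Delta_\gamma$ is elliptic on $\{x\neq 0\}$, standard unique continuation forces $v\equiv 0$, hence $w\equiv 0$. The case $\tau=0$ follows from the Dirichlet condition.

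For the resolvent bound, write $(i\tau-\mathcal{A})(v,w)=(F_1,F_2)$, so $w=i\tau v-F_1$ and $P_\tau v=F:=F_2+(i\tau+b)F_1$. Applying Theorem \ref{t:resgamma} with $h=1/\tau$ and using $(\Delta_\gamma+\tau^2)v=-P_\tau v+i\tau bv$ yields
\[
\|v\|_{L^2}\leq C\|v\|_{L^2(\omega)}+C\tau^{\gamma-1}\|P_\tau v\|_{L^2}+C\tau^{\gamma}\|bv\|_{L^2}.
\]
The damping identity $\tau\|b^{1/2}v\|_{L^2}^2=\Im\langle P_\tau v,v\rangle$ combined with $\inf_{\overline\omega}b>0$ yields $\|v\|_{L^2(\omega)}+\|bv\|_{L^2}\lesssim\tau^{-1/2}(\|P_\tau v\|_{L^2}\|v\|_{L^2})^{1/2}$. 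Substituting and absorbing via Young's inequality produces the semiclassical estimate $\|v\|_{L^2}\lesssim\tau^{2\gamma-1}\|P_\tau v\|_{L^2}$.

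The most delicate step is to upgrade this scalar bound into $\|(v,w)\|_{\mathcal{H}}\lesssim\tau^{2\gamma}\|(F_1,F_2)\|_{\mathcal{H}}$. Naively one has only $\|F\|_{L^2}\lesssim\tau\|(F_1,F_2)\|_{\mathcal{H}}$, losing a factor $\tau$ from the $i\tau F_1$ term and yielding only exponent $2\gamma+1$. To save this factor I introduce the spectral projector $\Pi_\tau$ of $\sqrt{-\Delta_\gamma}$ onto $\{\lambda_j\leq\tau/2\}$ and set $\tilde v:=v+i\tau^{-1}\Pi_\tau F_1$; a direct computation gives
\[
P_\tau\tilde v=F_2+i\tau(I-\Pi_\tau)F_1+b(I-\Pi_\tau)F_1-i\tau^{-1}\Delta_\gamma\Pi_\tau F_1.
\]
The frequency-localization bounds $\|(I-\Pi_\tau)F_1\|_{L^2}\lesssim\tau^{-1}\|F_1\|_{H^1_\gamma}$ and $\|\Delta_\gamma\Pi_\tau F_1\|_{L^2}\lesssim\tau\|F_1\|_{H^1_\gamma}$ yield $\|P_\tau\tilde v\|_{L^2}\lesssim\|(F_1,F_2)\|_{\mathcal{H}}$. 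Applying the scalar estimate to $\tilde v$ and reverting gives $\|v\|_{L^2}\lesssim\tau^{2\gamma-1}\|(F_1,F_2)\|_{\mathcal{H}}$, and the energy identity $\|\nabla_\gamma v\|^2=\tau^2\|v\|^2+\Re\langle F,v\rangle$ combined with $w=i\tau v-F_1$ upgrades this to the full $\mathcal{H}$-norm bound; Borichev–Tomilov then closes the argument. This frequency-decomposition trick, needed to attain the sharp exponent $2\gamma$ rather than $2\gamma+1$, is the main obstacle.
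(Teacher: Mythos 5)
Your proposal is correct and follows the same overall architecture as the paper: reduce, via the Borichev--Tomilov theorem, to the spectral inclusion $i\R\subset\rho(\mathcal{A})$ together with the resolvent bound $\|(i\tau-\mathcal{A})^{-1}\|_{\mathcal{L}(\mathcal{H})}\lesssim|\tau|^{2\gamma}$, and derive the latter from Theorem \ref{t:resgamma} combined with the damping identity $\tau\|b^{1/2}v\|_{L^2}^2=\Im\langle P_\tau v,v\rangle$ and Young's inequality. You also correctly isolate the one delicate point, namely that the source term $i\tau F_1$ naively costs an extra factor of $\tau$ and would only give the exponent $2\gamma+1$. Where you diverge from the paper is in how this factor is recovered. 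The paper proves a strengthened resolvent estimate (Proposition \ref{t:resgamma'}) allowing a right-hand side $g_1+g_2$ with $g_2$ measured in $H_{\gamma}^{-1}(M)$ at cost $h^{-(\gamma+2)}$; this is obtained by a duality argument through the invertible operator $P_h=-h^2\Delta_{\gamma}-1+ih^{\gamma+1}b$, and the troublesome term is then absorbed as $h^{-(\gamma+2)}\|h^2\Delta_{\gamma}f\|_{H_{\gamma}^{-1}}\lesssim h^{-\gamma}\|f\|_{H_{\gamma}^1}$. You instead keep the original Theorem \ref{t:resgamma} and correct $v$ by $i\tau^{-1}\Pi_\tau F_1$, with $\Pi_\tau$ the spectral truncation at $\lambda_j\leq\tau/2$; your computation of $P_\tau\tilde v$ and the bounds $\|(I-\Pi_\tau)F_1\|_{L^2}\lesssim\tau^{-1}\|F_1\|_{H_{\gamma}^1}$ and $\|\Delta_{\gamma}\Pi_\tau F_1\|_{L^2}\lesssim\tau\|F_1\|_{H_{\gamma}^1}$ are correct, and $\Pi_\tau F_1$ is a finite combination of eigenfunctions, hence $\tilde v\in D(\Delta_\gamma)$ and Theorem \ref{t:resgamma} legitimately applies to $\tilde v$. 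The two devices buy exactly the same exponent: your route avoids proving a second resolvent estimate at the price of a spectral decomposition of the source, while the paper's route is self-contained once $P_h^{-1}$ is controlled and does not require the functional calculus of $\Delta_\gamma$.

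One point you should tighten: for the inclusion $i\R\subset\rho(\mathcal{A})$ you only verify that $i\tau-\mathcal{A}$ is injective. Injectivity alone does not yield invertibility of an unbounded operator; the paper completes this step by first showing $(0,\infty)\subset\rho(\mathcal{A})$ via Lax--Milgram and then that $(\mathrm{Id}-\mathcal{A})^{-1}$ is compact (using the compact embeddings of Lemma \ref{embedding}), so that $i\tau-\mathcal{A}$ is a Fredholm perturbation of index $0$ and injectivity implies bijectivity. This is a standard but necessary supplement; with it, your argument is complete.
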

\begin{remark}
As usual for the damped wave equation, one cannot replace $E(\mathcal{A}(u_0,u_1))^{\frac12}$ in the r.h.s. of  \eqref{e:decayrate} by $ E(u_0,u_1)^{\frac12}$, otherwise the rate $t^{-\frac{1}{2\gamma}}$ could be improved to an exponential decay.
\end{remark}
The proof of this corollary from Theorem \ref{t:resgamma} is essentially contained in Proposition 2.4 of \cite{AL14}. To be self-contained, we prove Corollary \ref{c:damped} in Appendix \ref{s:damped}. Note that the decay rate $t^{-\frac{1}{2}}$ when $\gamma=1$ is not new. This special case is a direct consequence of the Schr\"odinger observability proved in \cite{BuSun} and an abstract result (Theorem 2.3) in \cite{AL14}, linking the Schr\"odinger observability and the decay rate of the associated damped wave equation. However, when $\gamma>1$, the Schr\"odinger equation is not observable ((3) of Theorem \ref{t:main}), and we have to apply Theorem \ref{t:resgamma}. 
Also, we do not address here the question of the optimality of the decay rate given by Corollary \ref{c:damped}. See \cite[Section 2C]{AL14} for other open questions related to decay rates of damped waves.

\subsection{Comments and sketch of proof} \label{s:speed}
Let us describe in a few words the intuition underlying our results, notably Theorem \ref{t:main}. For that, we start with the case $s=1/2$ (corresponding to wave equations) which, although not covered by Theorem \ref{t:main}, is of interest. Whereas elliptic wave equations are observable in finite time under a condition of geometric control (\cite{BLR}), it is known that for (strictly) subelliptic wave equations, observability fails in any time (\cite{Let}).  This is due to the fact that in (co)-directions where the sub-Laplacian is not elliptic, the propagation of waves, and more generally of any evolution equation built with sub-Laplacians, is slowed down. On the other side, large $s$ correspond to a quicker propagation along all directions. Therefore, Theorem \ref{t:main} characterizes the threshold for the ratio of $\gamma$ and $s$ to get an exact balance between subelliptic effects (measured by the step $\gamma+1$) and elliptic phenomena (measured by $s$), and thus ``finite speed of propagation'' along subelliptic directions.

This same analysis underlies the result on the Baouendi-Grushin-Schr\"odinger equation \cite{BuSun}, which was the starting point of our analysis: indeed, \cite{BuSun} deals with the critical case $\frac12(\gamma+1)=s=1$. Although the elliptic Schr\"odinger equation propagates at infinite speed, in subelliptic geometries, observability may hold only for sufficiently large time or even fail in any time if the degeneracy measured by $\gamma$ is sufficiently strong. To our knowledge, the paper \cite{BGX00}, which exhibited a family of travelling waves solutions of the Schr\"odinger equation \eqref{e:schrodfrac} for $\gamma=1$, moving at speeds proportional to $n\in\N$, was the first result showing the slowdown of propagation in degenerate directions.

\smallskip

The paper is organized as follows. 

In Section \ref{s:proofresgamma}, we prove Theorem \ref{t:resgamma}, roughly following the same lines as in \cite{BuSun}. Due to the absence of the time-variable in our resolvent estimate, our proof is however slightly simpler, but as a counterpart, our method does not allow us to compute explicitly the minimal time $T_{\inf}$ of observability in Point (2) of Theorem \ref{t:main}. After having spectrally localized the sub-Laplacian $\Delta_\gamma$ around $h^{-2}$, our proof relies on a careful analysis of several regimes of comparison between $|D_y|$ and $\Delta_\gamma$, which roughly correspond to different types of trajectories for the geodesics in $M$: we split the function $v$ appearing in \eqref{e:resgamma} according to Fourier modes in $y$ and then we establish estimates for different ``spectrally localized'' parts of $v$ of the form $\psi(h^2\Delta_\gamma)\chi_h(D_y)v$. Here, $\chi_h$ localizes $D_y$ in some subinterval of $\R$ which depends on $h$. Fixing a small constant $b_0\ll 1$, the three different regimes which we distinguish are:
\begin{itemize}
\item the degenerate regime in Section \ref{s:degenerate} ($|D_y|\geq b_0^{-1}h^{-1}$), for which we use a positive commutator method (also known as ``energy method'', and used for example to prove propagation of singularities in the literature, see \cite[Section 3.5]{Ho71});
\item the regime of the geometric control condition in Section \ref{s:gcc} ($b_0^{-1}h^{-1}\geq |D_y|\geq b_0h^{-1}$), handled with semi-classical defect measures;
\item the regime of horizontal propagation ($|D_y|\leq b_0h^{-1}$) in Sections \ref{s:hor1} and \ref{s:hor2}, for which we use a positive commutator argument, and then a normal form method.
\end{itemize}

 In Section \ref{s:proof12}, using the link between resolvent estimates and observability of Schr\"odinger-type semigroups established in \cite{BZ04}, we deduce Points (1) and (2) of Theorem \ref{t:main} from Theorem \ref{t:resgamma}. Indeed, we first establish a \textit{spectrally localized} observability inequality, from which we deduce the full observability using a classical procedure described for example in \cite{BZ12}.
 
 In Section \ref{s:Point3}, we prove Point (3) of Theorem \ref{t:main}. For that, we construct a sequence of approximate solutions of \eqref{e:schrodfrac} whose energy concentrates on a point $(x,y)\in (-1,1)\times \T$ with $x=0$ and $y\notin I$. The existence of such a sequence contradicts the observability inequality \eqref{e:obsschrod} and is possible only when $\frac12(\gamma+1)>s$. For constructing the sequence of initial data, we add in a careful way the ground states of the operators $-\partial_x^2+|x|^{2\gamma}\eta^2$ for different $\eta$'s (the Fourier variable of $y$). These initial data propagate at nearly null speed along the vertical axis $x=0$.
 
 Finally, in Appendix \ref{a:wellposed}, we prove the well-posedness of the Schr\"odinger-type equation \eqref{e:schrodfrac}, the heat-type equation \eqref{e:heatgamma} and the damped wave equation \eqref{e:damped}, using standard techniques such as the Hille-Yosida theorem.
 In Appendix \ref{s:damped}, we prove Corollary \ref{c:damped}. Using results of \cite{BT10}, it is sufficient to estimate the size of $(i\lambda{\rm Id}-\mathcal{A})^{-1}$ for large $\lambda\in\R$ (and in appropriate functional spaces). This is done mainly thanks to a priori estimates on the system $(i\lambda{\rm Id}-\mathcal{A})U=F$, and using the resolvent estimate of Theorem \ref{t:resgamma}.

\smallskip

\textbf{Acknowledgments.} We thank Camille Laurent for interesting discussions, and an anonymous referee for numerous suggestions which improved the paper. C.~L. was partially supported by the grant ANR-15-CE40-0018 of the ANR (project SRGI). C.~S is supported by the postdoc
program: ``Initiative d'Excellence Paris Seine`` of CY Cergy-Paris Universit\'e and ANR grant
ODA (ANR-18-CE40- 0020-01).

\section{Proof of Theorem \ref{t:resgamma}} \label{s:proofresgamma}
This section is devoted to the proof of Theorem \ref{t:resgamma}. In all the sequel, $\gamma\geq 1$ is fixed. It is sufficient to deal with the case where $\omega=(-1,1)_x\times I$ where $I$ is a simple interval, since if Theorem \ref{t:resgamma} holds for some $\omega=\omega_1$, then it holds for any $\omega_2 \supset \omega_1$. Hence, in all the sequel, we assume that $I$ is a simple interval $(a_1,a_2)$. Also, we use the notations $D_x=\frac{1}{i}\partial_x$ and $D_y=\frac{1}{i}\partial_y$.

We will argue by contradiction.  Assume that there exists a sequence $(v_h)_{h>0}$ such that
\begin{equation} \label{e:seekcontrad}
\|v_h\|_{L^2(M)}=1, \quad \|v_h\|_{L^2(\omega)}=o(1), \quad \|f_h\|_{L^2(M)}=o(h^{\gamma+1})
\end{equation}
where $f_h=(h^2\Delta_\gamma+1)v_h$, and we seek for a contradiction, which would prove Theorem \ref{t:resgamma}. Let us show that we can furthermore assume that $v_h$ has localized spectrum: for that, we consider an even cutoff $\psi\in C_c^\infty(\R)$, such that $\psi\equiv 1$ near $\pm 1$ and $\psi=0$ outside $(-2,-\frac{1}{2})\cup (\frac{1}{2},2)$. We set $w_h=(1-\psi(h^2\Delta_\gamma))v_h$. Then $(h^2\Delta_\gamma+1)w_h=(1-\psi(h^2\Delta_\gamma))f_h$ has $L^2$ norm which is $o(h^{\gamma+1})$. Moreover, we also deduce that $w_h=(h^2\Delta_\gamma+1)^{-1}(1-\psi(h^2\Delta_\gamma))f_h$ and since $(h^2\Delta_\gamma+1)^{-1}(1-\psi(h^2\Delta_\gamma))$ is elliptic and thus bounded from $L^2(M)$ to $L^2(M)$, we obtain $\|w_h\|_{L^2(M)}=o(1)$. Hence, considering $v_h-w_h$ instead of $v_h$, we can furthermore assume that $v_h=\psi(h^2\Delta_\gamma)v_h$.

In the next subsections, we use a decomposition of $v_h$ as $v_h=v_h^1+v_h^2+v_h^3+v_h^4$ where
\begin{align*}
v_h^1=(1-\chi_0(b_0hD_y))v_h,&\qquad  v_h^2=(\chi_0(b_0hD_y)-\chi_0(b_0^{-1}hD_y))v_h \\
v_h^3=(\chi_0(b_0^{-1}hD_y)-\chi_0(h^{\epsilon}D_y))v_h,& \qquad v_h^4=\chi_0(h^{\epsilon}D_y)v_h,
\end{align*}
where $0<\epsilon\ll 1,0<b_0\ll 1$ are small parameters which will be fixed throughout the article and will be specified later (respectively in Proposition \ref{Observation-Lowfrequency} and in Lemma \ref{ellipticregularity}).
This is a decomposition according to the dual Fourier variable of $y$ and defined by functional calculus. The cut-off $\chi_0\in C_c^\infty(\R)$ will be defined later (see \eqref{chi0}).  We prove that $v_h^j=o(1)$ for $j=1,2,3,4$, which contradicts \eqref{e:seekcontrad}. The methods used for each $j$ are quite different, and roughly correspond to the different behaviours of geodesics according to their momentum $\eta\sim D_y$.


\subsection{A priori estimate and elliptic regularity}
 We start with the following coercivity estimate:
\begin{lemma}\label{l:dy}
There exists $C_1>0$ such that for any $u$, the following inequality holds:
\begin{equation*}
\||D_y|^{\frac{2}{\gamma+1}} u\|_{L^2(M)}\leq C_1\|\Delta_\gamma u\|_{L^2(M)}
\end{equation*}
\end{lemma}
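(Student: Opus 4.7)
My plan is to prove the lemma by a Fourier decomposition in the $y$ variable followed by a scaling argument that reduces each Fourier mode to a fixed one-dimensional spectral problem on the real line.

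First, I would expand $u(x,y)=\sum_{n\in\Z} u_n(x)\,e^{iny}$ with $u_n\in H^2\cap H^1_0((-1,1))$, so that the operator is diagonalized in the $y$-variable:
\begin{equation*}
-\Delta_\gamma u=\sum_{n\in\Z}\bigl(-\partial_x^2+n^2|x|^{2\gamma}\bigr)u_n(x)\,e^{iny}.
\end{equation*}
By Parseval, the desired estimate reduces to proving, uniformly in $n\in\Z$,
\begin{equation*}
|n|^{\frac{2}{\gamma+1}}\|u_n\|_{L^2(-1,1)}\leq C_1\bigl\|(-\partial_x^2+n^2|x|^{2\gamma})u_n\bigr\|_{L^2(-1,1)}.
\end{equation*}
The case $n=0$ is trivial. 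For $n\neq 0$, I would perform the anisotropic rescaling $\tilde x=|n|^{1/(\gamma+1)}x$ and set $\tilde u(\tilde x)=u_n(\tilde x/|n|^{1/(\gamma+1)})$. A direct computation shows that $\partial_x^2$ scales by $|n|^{2/(\gamma+1)}$ and $n^2|x|^{2\gamma}$ scales by $|n|^{2-2\gamma/(\gamma+1)}=|n|^{2/(\gamma+1)}$ as well, so the two terms balance and the operator becomes $|n|^{2/(\gamma+1)}(-\partial_{\tilde x}^2+|\tilde x|^{2\gamma})$ on the interval $(-a_n,a_n)$ with $a_n=|n|^{1/(\gamma+1)}$, equipped with Dirichlet boundary conditions.

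The heart of the matter is then the uniform positivity of the one-dimensional operator $L_a=-\partial_{\tilde x}^2+|\tilde x|^{2\gamma}$ on $(-a,a)$ with Dirichlet data. By the Rayleigh quotient characterization, extending any $\phi\in H^1_0(-a,a)$ by zero to $\R$ shows
\begin{equation*}
\lambda_1(a)=\inf_{\phi\in H^1_0(-a,a),\,\phi\neq 0}\frac{\|\phi'\|_{L^2}^2+\||\tilde x|^\gamma\phi\|_{L^2}^2}{\|\phi\|_{L^2}^2}\;\geq\;\mu_0,
\end{equation*}
where $\mu_0>0$ is the ground state energy of $-\partial^2+|\tilde x|^{2\gamma}$ on $\R$ (which is strictly positive since the potential $|\tilde x|^{2\gamma}$ is confining and the operator is positive with purely discrete spectrum). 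By the spectral theorem, $\|L_a\tilde u\|_{L^2}\geq\mu_0\|\tilde u\|_{L^2}$ independently of $a$, which after undoing the rescaling yields the desired bound on each Fourier mode with a constant $C_1=1/\mu_0$.

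Finally, I would sum the mode-by-mode estimate via Parseval to obtain $\||D_y|^{2/(\gamma+1)}u\|_{L^2(M)}\leq C_1\|\Delta_\gamma u\|_{L^2(M)}$. The only substantive point that one must not overlook is the uniformity in $a$ of the lower bound on $\lambda_1(a)$; this is why the monotonicity argument via extension by zero is crucial, and it is the step where the sharp exponent $2/(\gamma+1)$ — the subelliptic gain — is naturally produced by the homogeneity of the Grushin symbol under the scaling $x\mapsto |n|^{1/(\gamma+1)}x$.
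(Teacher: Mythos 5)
Your proof is correct and follows essentially the same route as the paper: Fourier decomposition in $y$, the anisotropic rescaling $x\mapsto |n|^{1/(\gamma+1)}x$, and the uniform spectral lower bound for the model operator $-\partial_{\tilde x}^2+|\tilde x|^{2\gamma}$. The paper simply asserts that this operator is elliptic with a constant independent of $\eta$, whereas you make the key uniformity-in-$a$ point explicit via the extension-by-zero Rayleigh quotient comparison with the whole-line operator --- a worthwhile detail to spell out, since the rescaled Dirichlet interval $(-a_n,a_n)$ does depend on $n$.
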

\begin{proof}[Proof of Lemma \ref{l:dy}]
We write a Fourier expansion in $y$: for $\eta\in\Z$, we set $\widehat{u}_\eta(\cdot):=\mathcal{F}_y(u)(\cdot,\eta)$. Then, we have
\begin{align*}
\mathcal{F}_y(-\Delta_\gamma u)(x,\eta)=(D_x^2+|x|^{2\gamma}\eta^2)\widehat{u}_\eta(x).
\end{align*}
We make the change of variables $z=|\eta|^{\frac{1}{\gamma+1}}x$, and we set $f(z,\eta)=\mathcal{F}_y(-\Delta_\gamma u)(x,\eta)$ and $\widehat{v}_\eta(z)=\widehat{u}_\eta(x)$. Then we obtain
\begin{equation*}
f(z,\eta)=|\eta|^{\frac{2}{\gamma+1}}(D_z^2+|z|^{2\gamma})\widehat{v}_\eta(z),
\end{equation*}
and thus, using that $D_z^2+|z|^{2\gamma}$ is elliptic (since its spectrum is strictly above $0$), we get
\begin{equation*}
|\eta|^{\frac{2}{\gamma+1}}\|\widehat{v}_\eta\|_{L^2_z}\leq C \|f(\cdot,\eta)\|_{L^2_z}
\end{equation*}
for some constant $C>0$ (independent of $\eta$). Coming back to the $x$ variable and summing over $\eta$, we obtain
\begin{align*}
\||D_y|^{\frac{2}{\gamma+1}} u\|_{L^2(M)}^2&=\sum_{\eta\in\Z}|\eta|^{\frac{4}{\gamma+1}}\|\widehat{u}_\eta\|^2_{L^2_x} \\
&\leq C_1 \sum_{\eta\in\Z}\|\mathcal{F}_y(-\Delta_\gamma u)(\cdot,\eta)\|_{L^2_x}^2\\
&=C_1 \|\Delta_\gamma u\|_{L^2(M)}^2
\end{align*}
thanks to Plancherel formula, which finishes the proof.
\end{proof}

Let $\chi_0\in C_c^\infty(\R; [0,1])$ such that
\begin{equation}\label{chi0}
\chi_0(\zeta)\equiv 1, \text{ if } |\zeta|\leq (4C_1)^{\frac{\gamma+1}{2}} \text{ and } \chi_0(\zeta)\equiv 0 \text{ if } |\zeta|>(8C_1)^{\frac{\gamma+1}{2}}.
\end{equation}
\begin{cor} \label{c:locdy}
For $0<h<1$, there holds
\begin{equation*}
\psi(h^2\Delta_\gamma)(1-\chi_0(h^{\gamma+1}D_y))=0.
\end{equation*}
\end{cor}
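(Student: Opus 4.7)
The plan is to exploit the fact that $\Delta_\gamma$ and $D_y$ commute, so both operators can be diagonalized simultaneously via Fourier series in $y$. On the $\eta$-th Fourier mode, $-\Delta_\gamma$ acts as the one-dimensional operator $L_\eta := -\partial_x^2 + |x|^{2\gamma}\eta^2$ on $(-1,1)$ with Dirichlet boundary conditions, which is self-adjoint and non-negative. In particular, the functional-calculus operators $\psi(h^2 \Delta_\gamma)$ and $\chi_0(h^{\gamma+1} D_y)$ commute, so it suffices to establish $(1-\chi_0(h^{\gamma+1}D_y))\psi(h^2\Delta_\gamma)=0$, and this identity can be checked mode by mode.

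For arbitrary $v \in L^2(M)$, I would write $u = \psi(h^2 \Delta_\gamma) v = \sum_{\eta \in \Z} u_\eta(x) e^{i\eta y}$. Since $\psi$ is supported in $\{|\zeta| \leq 2\}$ and the spectrum of $h^2 \Delta_\gamma$ on the $\eta$-th mode lies in $(-\infty, 0]$, the component $u_\eta$ belongs to the spectral subspace of $L_\eta$ corresponding to $\{h^2 L_\eta \in [1/2, 2]\}$. By the spectral theorem this forces $\|L_\eta u_\eta\|_{L^2_x} \leq 2 h^{-2} \|u_\eta\|_{L^2_x}$.

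The next step is to apply Lemma \ref{l:dy} to the pure Fourier mode $u_\eta(x) e^{i\eta y} \in L^2(M)$. Since $|D_y|^{2/(\gamma+1)}$ acts on this function by multiplication by $|\eta|^{2/(\gamma+1)}$ and $\Delta_\gamma$ acts by $-L_\eta$, the lemma yields $|\eta|^{2/(\gamma+1)} \|u_\eta\|_{L^2_x} \leq C_1 \|L_\eta u_\eta\|_{L^2_x} \leq 2 C_1 h^{-2} \|u_\eta\|_{L^2_x}$. Whenever $u_\eta \neq 0$, raising to the power $(\gamma+1)/2$ gives $|h^{\gamma+1} \eta| \leq (2C_1)^{(\gamma+1)/2}$, which is strictly less than $(4C_1)^{(\gamma+1)/2}$, placing $h^{\gamma+1}\eta$ inside the region where $\chi_0 \equiv 1$. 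Consequently, $(1 - \chi_0(h^{\gamma+1} \eta)) u_\eta = 0$ for every $\eta$, and recombining the Fourier series proves the identity.

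No step here is a serious obstacle: the threshold $(4C_1)^{(\gamma+1)/2}$ in the definition \eqref{chi0} of $\chi_0$ was chosen precisely so that the constant $(2C_1)^{(\gamma+1)/2}$ produced by combining the spectral localization $h^2 L_\eta \leq 2$ with the a priori estimate of Lemma \ref{l:dy} falls strictly inside the flat part of $\chi_0$.
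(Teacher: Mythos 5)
Your proof is correct and follows essentially the same route as the paper: both arguments combine Lemma \ref{l:dy} with the spectral localization imposed by $\psi$ and the choice of threshold $(4C_1)^{(\gamma+1)/2}$ in the definition \eqref{chi0} of $\chi_0$. The only cosmetic difference is that you argue mode by mode in the $y$-Fourier variable, whereas the paper applies the lemma to the full expansion and derives a contradiction from the summed Plancherel inequality.
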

\begin{proof}
For $n\in\Z$, we consider an Hilbert basis of eigenfunctions $\varphi_{m,n}$ of $L^2_x$ satisfying
\begin{equation}  \label{e:eigvarphi}
(D_x^2+|x|^{2\gamma}n^2)\varphi_{m,n}=\lambda_{m,n}^2\varphi_{m,n},\quad \|\varphi_{m,n}(x)\|_{L^2((-1,1))}=1,
\end{equation}
so that $\varphi_{m,n}e^{iny}$ is an eigenfunction of $\Delta_\gamma$ with associated eigenvalue $-\lambda_{m,n}^2$. Let $f\in D(\Delta_\gamma)$, and consider $f_h=\psi(h^2\Delta_\gamma)(1-\chi_0(h^{\gamma+1}D_y))f$. We write
\begin{equation*}
f_h=\sum_{m,n}a_{m,n}\psi(-h^2\lambda_{m,n}^2)(1-\chi_0(h^{\gamma+1}n))\varphi_{m,n}e^{iny}.
\end{equation*}
We use Plancherel formula, apply Lemma \ref{l:dy} to $f_h$ and we obtain
\begin{equation} \label{e:compsum}
\sum_{m,n}|n|^{\frac{4}{\gamma+1}}|a_{m,n}|^2\psi(-h^2\lambda_{m,n}^2)^2(1-\chi_0(h^{\gamma+1}n))^2\leq C_1^2\sum_{m,n}\lambda_{m,n}^4|a_{m,n}|^2\psi(-h^2\lambda_{m,n}^2)^2(1-\chi_0(h^{\gamma+1}n))^2
\end{equation}
On the support of $\theta(h,m,n):=\psi(-h^2\lambda_{m,n}^2)^2(1-\chi_0(h^{\gamma+1}n))^2$, there holds $|n|^{\frac{4}{\gamma+1}}\geq 16C_1^2h^{-4}>C_1^2\lambda_{m,n}^4$. Indeed, for the first inequality, we used the support properties of $\chi_0$, and for the second the support of $\psi$. This contradicts \eqref{e:compsum}, except if all $a_{m,n}$ vanish, i.e., $f_h\equiv 0$. 
\end{proof}

Corollary \ref{c:locdy} implies that 
\begin{equation} \label{e:locvcoercivity}
v_h=\psi(h^2\Delta_\gamma)\chi_0(h^{\gamma+1}D_y)v_h.
\end{equation}

The next lemma shows that in the regime $|D_y|\gg h^{-1}$, the energy of $v_h$ concentrates in the region $|x|\ll 1$. Let $\chi\in C_c^{\infty}(\R)$ such that $\chi(\zeta) \equiv 1$ for $|\zeta|\leq 2^{\frac{1}{\gamma}}$.  Also, possibly taking a larger $C_1$ in Lemma \ref{l:dy}, we can assume that $C_1\geq 1$: in particular, $\chi_0(\zeta)\equiv 1$ for $|\zeta|\leq 1$.
\begin{lemma}[Elliptic regularity]\label{ellipticregularity} 
There exist small constants $0<h_0\ll 1$ and $0<b_0\ll 1$ such that for all $0<h<h_0$, there holds
\begin{align*}
 &\big\|\big(1-\chi(b_0^{-\frac{1}{\gamma}}x)\big)(1-\chi_0(b_0hD_y))v_h\big\|_{L^2(M)}+\big\|\big(1-\chi(b_0^{-\frac{1}{\gamma}}x)\big)(1-\chi_0(b_0hD_y))h\partial_xv_h\big\|_{L^2(M)}\\ \leq & C_Nh^N\Big(\|v_h\|_{L^2(M)}+\|h\nabla_{\gamma}v_h\|_{L^2(M)}\Big),
\end{align*}	
for any $N\in\N$.
\end{lemma}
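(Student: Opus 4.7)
The main heuristic is that $v_h$ is spectrally localized by $v_h=\psi(h^2\Delta_\gamma)v_h$, which constrains it microlocally to the region $\{\xi^2+|x|^{2\gamma}\eta^2\in[\tfrac12,2]\}$ of semi-classical phase space (with $hD_x,hD_y$ as the semi-classical momenta), whereas the cut-off $\Pi_h:=(1-\chi(b_0^{-1/\gamma}x))(1-\chi_0(b_0hD_y))$ has symbol supported in $\{|x|^\gamma\geq 2b_0\}\cap\{|\eta|\geq b_0^{-1}\}$. On this last set $|x|^{2\gamma}\eta^2\geq 4$, which is disjoint (with a uniform gap) from the phase-space region where $\psi(h^2\Delta_\gamma)$ is microlocalized, since there $|x|^{2\gamma}\eta^2\leq 2$. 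The composition $\Pi_h\psi(h^2\Delta_\gamma)$ should therefore be $O_{L^2\to L^2}(h^\infty)$ by the standard disjoint-support rule of the semi-classical pseudo-differential calculus, and the lemma would follow immediately by writing $\Pi_h v_h=\Pi_h\psi(h^2\Delta_\gamma)v_h$.

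To make this precise, I would use that the principal symbol of $h^2\Delta_\gamma$ is $q(x,\xi,\eta)=-\xi^2-|x|^{2\gamma}\eta^2$ and invoke the Helffer-Sj\"ostrand formula to obtain an asymptotic expansion
\[
\psi(h^2\Delta_\gamma)=\mathrm{Op}^w_h(\psi(q))+h\,\mathrm{Op}^w_h(r_1)+\cdots+h^{N-1}\mathrm{Op}^w_h(r_{N-1})+h^N R_N,
\]
with each $r_j$ supported in $\mathrm{supp}(\psi(q))$ and $R_N$ bounded on $L^2(M)$. Composing with $\Pi_h$, each term $\Pi_h\,\mathrm{Op}^w_h(r_j)$ has composition symbol supported in $\mathrm{supp}(\Pi_h)\cap\mathrm{supp}(r_j)=\emptyset$, hence is itself $O(h^\infty)$ in operator norm by the disjoint-support composition rule. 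Summing gives $\Pi_h\psi(h^2\Delta_\gamma)=O_{L^2\to L^2}(h^N)$ for every $N$, and applied to $v_h$ yields $\|\Pi_h v_h\|_{L^2}\leq C_N h^N\|v_h\|_{L^2}$. For the $h\partial_x$ piece, the same argument applies to $h\partial_x\,\psi(h^2\Delta_\gamma)$, whose leading symbol $i\xi\,\psi(q)$ is still supported in $\{|x|^{2\gamma}\eta^2\leq 2\}$ and thus disjoint from $\mathrm{supp}(\Pi_h)$; this gives the companion bound $\|\Pi_h h\partial_x v_h\|_{L^2}\leq C_N h^N\|v_h\|_{L^2}$. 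The $\|h\nabla_\gamma v_h\|_{L^2}$ term on the right-hand side of the lemma is not strictly needed for spectrally localized $v_h$ (since it is bounded by $\|v_h\|_{L^2}$ via spectral localization), but is kept to maintain the natural homogeneity of the estimate.

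The main technical subtlety is that $|x|^{2\gamma}$ is not smooth at $x=0$ when $\gamma\notin\N$, so $q$ does not lie in a standard Weyl-H\"ormander symbol class near $\{x=0\}$ and one cannot directly apply the pseudo-differential calculus to $h^2\Delta_\gamma$ globally. However, $\mathrm{supp}(\Pi_h)$ is uniformly separated from $\{x=0\}$ by a distance of order $b_0^{1/\gamma}>0$, so one can harmlessly replace $|x|^{2\gamma}$ by a smooth nonnegative function $\widetilde m(x)$ coinciding with $|x|^{2\gamma}$ on $\{|x|\geq (2b_0)^{1/\gamma}\}$ before applying the symbolic calculus; the difference between $\psi(h^2\Delta_\gamma)$ and $\psi(h^2(\partial_x^2+\widetilde m(x)\partial_y^2))$, composed with $\Pi_h$, is then controlled by the disjoint $x$-supports of the correction and of $1-\chi(b_0^{-1/\gamma}x)$. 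With this cosmetic modification, all the semi-classical steps above go through, yielding the desired $O(h^N)$ estimate for every $N\in\N$.
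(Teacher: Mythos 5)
Your heuristic is exactly the one behind the paper's proof (the lemma is even called ``elliptic regularity'': on the support of $(1-\chi(b_0^{-1/\gamma}x))(1-\chi_0(b_0hD_y))$ one has $|x|^{2\gamma}(h\eta)^2\geq 4$, which is disjoint from the spectral window $\xi^2+|x|^{2\gamma}(h\eta)^2\in[\tfrac12,2]$). But the implementation via ``the standard disjoint-support composition rule'' has two genuine gaps. First, $\psi(h^2\Delta_\gamma)$ is \emph{not} an $h$-pseudodifferential operator with symbol in a standard class supported near $q^{-1}(\mathrm{supp}\,\psi)$: the characteristic set $\{q\in\mathrm{supp}\,\psi\}$ is non-compact in the $\eta$-direction near $x=0$ (by Corollary 2.3 of the paper the relevant frequencies reach $|\eta|\sim h^{-(\gamma+1)}$, i.e.\ $|h\eta|\sim h^{-\gamma}$), so $q+i$ is not elliptic in any class $S(m)$ and the Dimassi--Sj\"ostrand functional calculus does not apply off the shelf. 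Worse, on the frequency range where $1-\chi_0(b_0hD_y)$ lives, each $x$-derivative of the ``potential'' $|x|^{2\gamma}(h\eta)^2$ costs a factor up to $h^{-2\gamma}$, so the terms of the composition expansion do not gain powers of $h$; the smallness in the forbidden region is really an Agmon-type (exponential-weight) phenomenon, not a stationary-phase/disjoint-support one. Second, the Dirichlet boundary $x=\pm1$ lies squarely inside $\mathrm{supp}(1-\chi(b_0^{-1/\gamma}x))$, and the interior parametrix for $(z-h^2\Delta_\gamma)^{-1}$ you invoke says nothing there. Your final ``cosmetic modification'' does not repair this: the functional calculus is nonlocal, so $\psi(h^2\Delta_\gamma)-\psi(h^2\widetilde P)$ is not supported where the coefficients differ; writing the difference through resolvents produces the factor $h^2(|x|^{2\gamma}-\widetilde m)\partial_y^2$, of operator norm $O(h^{-2\gamma})$ on the range of $\chi_0(h^{\gamma+1}D_y)$, and controlling its propagation away from $\{|x|\leq(2b_0)^{1/\gamma}\}$ through $(z-h^2\Delta_\gamma)^{-1}$ is precisely the forbidden-region decay you are trying to prove.

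The paper avoids all of this by exploiting that $\Delta_\gamma$ commutes with $D_y$: it expands $v_h$ in the joint eigenbasis $\varphi_{m,n}e^{iny}$, reduces the lemma (via Cauchy--Schwarz and a crude Weyl-law count, which is harmless since the target is $O(h^N)$ for all $N$) to the single-eigenfunction estimate \eqref{eq:elliptic}, and proves that estimate by multiplying the eigenvalue equation by $(1-\chi)^2\overline{\varphi}_{m,n}$ and integrating by parts. The ellipticity gap $n^2|x|^{2\gamma}-\lambda_{m,n}^2\geq 2h^{-2}$ on $\mathrm{supp}(1-\chi)$ yields a gain of one power of $h$ per step, and iterating with nested cutoffs $\chi_{(1)},\dots,\chi_{(N)}$ gives $O(h^N)$. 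This energy argument is insensitive to the size of $n$ and handles the Dirichlet boundary for free, since the boundary terms vanish. If you want to keep a microlocal flavor, you would need to run an Agmon/exponential-weight estimate for the one-dimensional operators $(hD_x)^2+(hn)^2|x|^{2\gamma}$ uniformly in $n$, rather than the two-dimensional symbolic calculus.
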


\begin{proof}
As in the previous lemma, we write the eigenfunction expansion of $v_h$ as $$(1-\chi_0(b_0hD_y))v_h=\sum_{\substack{m,n:|n|\geq b_0^{-1}h^{-1}\\ \frac{1}{\sqrt{2}}h^{-1}\leq \lambda_{m,n}\leq\sqrt{2}h^{-1}} }a_{m,n}\mathrm{e}^{iny}\varphi_{m,n}(x)$$
since $\chi_0(\zeta)\equiv 1$ for $|\zeta|\leq 1$ and $v_h=\psi(h^2\Delta_\gamma)v_h$.

We claim that it suffices to prove:
\begin{align}\label{eq:elliptic} 
 \|(1-\chi(b_0^{-\frac{1}{\gamma}}x))\varphi_{m,n}\|_{L^2}+\|(1-\chi(b_0^{-\frac{1}{\gamma}}x))h\partial_x\varphi_{m,n}\|_{L^2} \leq C_Nh^N
\end{align}
for all $N\in\N$ and $m,n$ such that $\frac{1}{\sqrt{2}}h^{-1}\leq \lambda_{m,n}\leq \sqrt{2}h^{-1}, |n|\geq b_0^{-1}h^{-1}$.
Indeed, Cauchy-Schwarz and \eqref{eq:elliptic} together imply
\begin{align*}
&\big\|\big(1-\chi(b_0^{-\frac{1}{\gamma}}x)\big)(1-\chi_0(b_0hD_y))v_h \big\|_{L^2(M)}\\ \leq &\sum_{\substack{m,n: b_0^{-1}h^{-1}\leq |n|\leq Ch^{-(\gamma+1)} \\
\frac{1}{\sqrt{2}}h^{-1}\lambda_{m,n}\leq \sqrt{2}h^{-1} } }|a_{m,n}|\big\|(1-\chi(b_0^{-\frac{1}{\gamma}}x))\varphi_{m,n} \big\|_{L^2}\\
\leq &C_Nh^N\|v_h\|_{L^2}(\#\{(m,n):  b_0^{-1}h^{-1}\leq |n|\leq Ch^{-(\gamma+1)}, \frac{1}{\sqrt{2}}h^{-1}\leq \lambda_{m,n}\leq \sqrt{2}h^{-1} \} )^{1/2}.
\end{align*}
Since $\lambda_{m,n}=|n|^{\frac{2}{\gamma+1}}\mu_{m,n}$ where $\mu_{m,n}$ is the $m$-th eigenvalue of the operator $D_z^2+|z|^{2\gamma}$ on $L^2(|z|\leq |n|^{\frac{1}{\gamma+1}})$ with Dirichlet boundary condition, we deduce from Weyl's law that
$$ \#\{(m,n):  b_0^{-1}h^{-1}\leq |n|\leq Ch^{-(\gamma+1)}, \frac{1}{\sqrt{2}}h^{-1}\leq\lambda_{m,n}\leq \sqrt{2}h^{-1} \}\leq Ch^{-N_0}
$$
for some $N_0\in\N$.\footnote{To obtain this rough estimate, it suffices to apply Weyl's law for each fixed $n$ and count the number of $n$.} Therefore, it is sufficient to establish \eqref{eq:elliptic}, which roughly says that in the regime we consider, the energy of eigenfunctions concentrates near $x=0$.

Multiplying \eqref{e:eigvarphi} by $(1-\chi(b_0^{-\frac{1}{\gamma}}x))^2\ov{\varphi}_{m,n}$ and integrating over $x\in(-1,1)$, we obtain that
\begin{align*}
\int_{-1}^{1}(1-\chi(b_0^{-\frac{1}{\gamma}}x))^2\lambda_{m,n}^2|\varphi_{m,n}(x)|^2dx=\int_{-1}^{1}(1-\chi(b_0^{-\frac{1}{\gamma}}x))^2\ov{\varphi}_{m,n}(x)\cdot (-\partial_x^2+|x|^{2\gamma}n^2)\varphi_{m,n} dx.
\end{align*} 
Doing integration by part for the r.h.s., and using the fact that $n^2|x|^{2\gamma}\geq \frac{4}{h^2} $ on the support of $1-\chi(b_0^{-\frac{1}{\gamma}}x)$ when $|n|\geq b_0^{-1}h^{-1}$, we deduce that the r.h.s. can be bounded from below by
\begin{align*}
&\frac{4}{h^2} \int_{-1}^{1}(1-\chi(b_0^{-\frac{1}{\gamma}}x))^2|\varphi_{m,n}(x)|^2dx+\int_{-1}^1 (1-\chi(b_0^{-\frac{1}{\gamma}}x))^2|\partial_x \varphi_{m,n}(x)|^2dx\\-&\int_{-1}^12b_0^{-\frac{1}{\gamma}}\chi'(b_0^{-\frac{1}{\gamma}}x)(1-\chi(b_0^{-\frac{1}{\gamma}}x) )\ov{\varphi}_{m,n}(x)\partial_x\varphi_{m,n}(x)dx.
\end{align*}
Using the fact that $\frac{4}{h^2}-\lambda_{m,n}^2\geq \frac{2}{h^2}$, we obtain that
\begin{equation}\label{induction0}
\begin{aligned}
&2h^{-2}\|(1-\chi(b_0^{-\frac{1}{\gamma}}x))\varphi_{m,n} \|_{L^2}^2+\|(1-\chi(b_0^{-\frac{1}{\gamma}}x))\partial_x\varphi_{m,n} \|_{L^2}^2 \\ \leq &Cb_0^{-\frac{1}{\gamma}}\|\chi'(b_0^{-\frac{1}{\gamma}}x)\varphi_{m,n}\|_{L^2}\|(1-\chi(b_0^{-\frac{1}{\gamma}}x) )\partial_x\varphi_{m,n}\|_{L^2}.
\end{aligned}
\end{equation}
Using Young's inequality in the r.h.s., this implies
\begin{align*}  \|(1-\chi(b_0^{-\frac{1}{\gamma}}x))\varphi_{m,n}\|_{L^2}+\|(1-\chi(b_0^{-\frac{1}{\gamma}}x))h\partial_x\varphi_{m,n}\|\leq Cb_0^{-\frac{1}{\gamma}}h.
\end{align*}
To prove a better estimate, i.e. with an $h^N$ in the r.h.s. instead of $h$, we observe that 
\begin{align*}
 \|\chi'(b_0^{-\frac{1}{\gamma}}x)\varphi_{m,n}\|_{L^2}\leq C\|(1-\widetilde{\chi}(b_0^{-\frac{1}{\gamma}}x ))\varphi_{m,n}\|_{L^2}
\end{align*}
for another cutoff $\widetilde{\chi}$ such that $\widetilde{\chi}\chi=\widetilde{\chi}$. Therefore, we choose cutoffs $\chi_{(1)},\chi_{(2)},\cdots, \chi_{(N)}\in C_c^{\infty}(\R)$ such that $\chi_{(1)}=\chi$ and $\chi_{(k)}\chi_{(k+1)}=\chi_{(k+1)}$ for all $1\leq k\leq N$ and such that \eqref{induction0} holds by replacing $\chi$ by $\chi_{(k)}$ and 
$$ \|\chi_{(k)}'(b_0^{-\frac{1}{\gamma}}x)\varphi_{m,n}\|_{L^2}\leq C_k\|(1-\chi_{(k+1)}(b_0^{-\frac{1}{\gamma}}x))\varphi_{m,n} )\|_{L^2},\quad k=1,2,\cdots, N-1.
$$
Now since for $\chi_{(N)}$,
\begin{align*}
\|(1-\chi_{(N)}(b_0^{-\frac{1}{\gamma}}x))\varphi_{m,n}\|_{L^2}+\|(1-\chi_{(N)}(b_0^{-\frac{1}{\gamma}}x))h\partial_x\varphi_{m,n}\|\leq Cb_0^{-\frac{1}{\gamma}}h,
\end{align*}
we deduce by induction (in the reverse order) that
$$ \|(1-\chi_{(1)}(b_0^{-\frac{1}{\gamma}}x))\varphi_{m,n}\|_{L^2}+\|(1-\chi_{(1)}(b_0^{-\frac{1}{\gamma}}x))h\partial_x\varphi_{m,n}\|\leq Cb_0^{-\frac{N}{\gamma}}h^N.
$$
This completes the proof of Lemma \ref{ellipticregularity}.
\end{proof}

\subsection{Degenerate regime}\label{s:degenerate}
For $0<h<1$ and $b_0$ fixed once for all thanks to Lemma \ref{ellipticregularity}, we define the semiclassical spectral projector 
$$
\Pi_h^{b_0h}:=\psi(h^2\Delta_\gamma)(\chi_0(h^{\gamma+1}D_y)-\chi_0(b_0hD_y)).
$$
In this subsection, we will show that
\begin{align}\label{degenerateregime}
\|\Pi_h^{b_0h}v_h\|_{L^2(M)}=o(1),\quad h\rightarrow 0. 
\end{align}
We prove it by contradiction. If not, we must have $\|w_h\|_{L^2(M)}\gtrsim 1$ where $w_h=\Pi_h^{b_0h}v_h$. We set $\widetilde{f}=\Pi_h^{b_0h}f$ so that
$$ (h^2\Delta_{\gamma}+1)w_h=\widetilde{f}_h.
$$ 

Let us notice that 
\begin{equation*}
\left| \|h\nabla_\gamma w_h\|^2_{L^2(M)}-\|w_h\|^2_{L^2(M)}\right|\leq \|w_h\|_{L^2(M)}\|(h^2\Delta_\gamma+1)w_h\|_{L^2(M)}
\end{equation*}
where $\nabla_\gamma=(\partial_x,x^{\gamma}\partial_y)$ is the horizontal gradient. This follows from integration by part in the integral $\int w_h(h^2\Delta_\gamma+1)w_h$. We deduce
\begin{equation} \label{e:hnabla}
\|h\nabla_\gamma w_h\|_{L^2(M)}=\|w_h\|^2_{L^2(M)}+o(1).
\end{equation}

The proof of \eqref{degenerateregime} is mainly based on the following commutator relation:
\begin{equation*}
[\Delta_\gamma,x\partial_x+(\gamma+1)y\partial_y]=2\Delta_\gamma.
\end{equation*}
This is an illustration for the positive commutator method, which we shall use again in other parts of the proof. This method dates back at least to \cite[Section 3.5]{Ho71} and has been widely used, for example for proving propagation of singularities for the wave equation.

Note that $y\partial_y$ is not defined globally on $\mathbb{T}_y$. This is why we introduce the following cut-off procedure. Let $\phi\in C^\infty(\T)$ such that $\phi\equiv 1$ on $\T\setminus (a_1,a_2)$, $\text{supp}(\phi')\subset (a_1,a_2)$ and $\phi\equiv 0$ on a strict sub-interval of $I=(a_1,a_2)$. Then, considering $\phi(y)y\partial_y$ on the interval $[\frac{a_1+a_2}{2},\frac{a_1+a_2}{2}+2\pi]$ and then periodizing, we obtain an objet globally defined on $\mathbb{T}$. 

We also set $\chi_{b_0}(x)=\chi(b_0^{-\frac{1}{\gamma}}x)$ (see Lemma \ref{ellipticregularity}). We compute the inner product
\begin{equation*}
C_\gamma:=([h^2\Delta_\gamma+1,\chi_{b_0}(x)\phi(y)(x\partial_x+(\gamma+1)y\partial_y)]w_h,w_h)_{L^2(M)}
\end{equation*}
in two ways. The first way is to expand the bracket and use the self-adjointness of $\Delta_\gamma$:
\begin{equation*}
C_\gamma=(\chi_{b_0}(x)\phi(y)(x\partial_xv_h+(\gamma+1)y\partial_yw_h),\widetilde{f}_h)_{L^2(M)}-(\chi_{b_0}(x) \phi(y)(x\partial_x\widetilde{f}_h+(\gamma+1)y\partial_y\widetilde{f}_h),w_h)_{L^2(M)}.
\end{equation*}
The second way is to use the computation
\begin{align*}
&[h^2\Delta_\gamma+1,\chi_{b_0}(x)\phi(y)(x\partial_x+(\gamma+1)y\partial_y)]\\=&2h^2\chi_{b_0}(x)\phi(y)\Delta_\gamma+h^2(\gamma+1)|x|^{2\gamma}(\phi''(y)y+2\phi'(y))\chi_{b_0}(x)\partial_y\\+&h^2\phi''(y)|x|^{2\gamma}x\chi_{b_0}(x)\partial_x 
+2h^2(\gamma+1)|x|^{2\gamma}y\phi'(y)\chi_{b_0}(x)\partial_y^2+2h^2|x|^{2\gamma}x\chi_{b_0}(x)\phi'(y)\partial^2_{xy}\\
+&h^2\phi(y)(\chi_{b_0}''(x)+2\chi'_{\epsilon}(x)\partial_x )(x\partial_x+(\gamma+1)y\partial_y).
\end{align*}
From the elliptic regularity (Lemma \ref{ellipticregularity}) and \eqref{e:hnabla}, on the supports of $1-\chi_{b_0}(x), \chi_{b_0}'(x),\chi_{b_0}''(x)$, the $L^2$ norm of $w_h$ and $\nabla_{\gamma}w_h$ is of order $O(h^{N})\|w_h\|_{L^2}$ for any $N\in\N$. Then, using integration by part and Young's inequality, we obtain 
\begin{equation*}
C_\gamma=(2\phi(y)h^2\Delta_\gamma w_h,w_h)_{L^2(M)}+O(h)\|h\nabla_\gamma w_h\|^2_{L^2(M)}+O(h)\|w_h\|_{L^2(M)}^2+O(1)\|h\nabla_\gamma w_h\|^2_{L^2(\text{supp}(\phi'))}.
\end{equation*}

Equating the two ways of computing $C_\gamma$ and using integration by parts, we obtain
\begin{align*}
\|\phi(y)^{1/2}h\nabla_\gamma w_h\|^2_{L^2(M)}&\leq O(h)\|h\nabla_\gamma w_h\|^2_{L^2(M)}+O(h)\|w_h\|_{L^2(M)}^2+O(1)\|h\nabla_\gamma w_h\|^2_{L^2(\text{supp}(\phi'))} \\
&+O(1)\|\widetilde{f}_h\|_{L^2(M)}(\|\partial_x w_h\|_{L^2(M)}+\|\partial_y w_h\|_{L^2(M)}).
\end{align*}
First, we notice that we can replace the left hand side simply by $\|h\nabla_\gamma w_h\|^2_{L^2(M)}$ (which is $\gtrsim 1$ thanks to \eqref{e:hnabla}) and the above inequality remains true: this is due to the presence of $O(1)\|h\nabla_\gamma w_h\|^2_{L^2(\text{supp}(\phi'))}$ in the right hand side. Then, for $h$ sufficiently small, we absorb the $O(h)\|h\nabla_\gamma w\|^2_{L^2(M)}$ and the $O(h)\|w_h\|_{L^2(M)}^2$ terms in the left hand side. Finally, we use
\begin{gather}
\|\partial_x w_h\|_{L^2(M)}\leq h^{-1}\|h\nabla_\gamma w_h\|_{L^2(M)}\lesssim h^{-1} \nonumber \\
\|\partial_yw_h\|_{L^2(M)}\leq h^{-(\gamma+1)}\|w_h\|_{L^2(M)} \nonumber \\
\|\widetilde{f}_h\|_{L^2(M)}\leq \|f_h\|_{L^2(M)} =o(h^{\gamma+1}) \nonumber
\end{gather}
where the first line comes from \eqref{e:hnabla}, the second line from Corollary \ref{c:locdy} together with $w_h=\psi(h^2\Delta_\gamma)w_h$, and the third line from Plancherel formula and \eqref{e:seekcontrad}. We obtain
\begin{equation} \label{e:largeinomega}
1\lesssim \|h\nabla_\gamma w_h\|_{L^2(\text{supp}(\phi'))}^2.
\end{equation}

Let us prove that this contradicts \eqref{e:seekcontrad}. Let $\phi_1\in C^\infty(\T_y)$ such that $\phi_1=1$ on $\text{supp}(\phi')$ and $\phi_1=0$ on $\T_y\setminus I$. In particular, together with \eqref{e:largeinomega}, this implies
\begin{equation*} 
1\lesssim \|\phi_1(y)h\nabla_\gamma w_h\|_{L^2(M)}^2.
\end{equation*}
By integration by parts, there holds
\begin{align*}
\|\phi_1(y)h\nabla_\gamma w_h\|_{L^2(M)}^2&=-h^2\int_M w_h (\nabla_\gamma(\phi_1^2)\cdot \nabla_\gamma w_h) dxdy- h^2\int_M \phi_1^2 w_h\Delta_\gamma w_hdxdy\\
&=-h^2\int_M w_h (\nabla_\gamma(\phi_1^2)\cdot \nabla_\gamma w_h) dxdy+ \int_M \phi_1^2 w_h(w_h-\widetilde{f}_h)dxdy
\end{align*}
where in the last line we used the equation of $w_h$. Using \eqref{e:seekcontrad}, \eqref{e:hnabla} and Cauchy-Schwarz inequality, we see that the first term in the last line is $O(h)$. For the second term, we write
\begin{equation*}
\left|\int_M \phi_1^2 w_h(w_h-f_h)dxdy\right|=\|\phi_1 w_h\|_{L^2(M)}^2+o(1),
\end{equation*}
and we note that $$\|\phi_1 w_h\|_{L^2(M)}\leq \|[\phi_1,\Pi_h^{b_0h}]v_h\|_{L^2(M)}+\|\Pi_h^{b_0h}(\phi_1v_h)\|_{L^2(M)} \leq O(h)+ \|v_h\|_{L^2(\omega)}=o(1)$$ 
as $h\rightarrow 0$, by assumption. All in all, we obtain $ \|\phi_1(y)h\nabla_\gamma w_h\|_{L^2(M)}^2=o(1)$, which is a contradiction. This concludes the proof of  \eqref{degenerateregime}.


\subsection{Regime of the geometric control condition} \label{s:gcc}

Let
$$ \Pi_{h,b_0}=\psi(h^2\Delta_G)\chi_0(b_0hD_y)(1-\chi_0(b_0^{-1}hD_y) )
$$
and $z_h=\Pi_{h,b_0}v_h$. In this subsection, we will show that
\begin{align}\label{GCCregime} 
\|z_h\|_{L^2(M)}=o(1),\quad h\rightarrow 0.
\end{align}
We will use a defect-measure based argument as in \cite[Section 5]{BuSun}. It consists in showing that the semi-classical defect measure associated with a subsequence of $(z_h)_{h>0}$ is invariant along the Melrose-Sj\"ostrand flow (corresponding to the principal symbol $p=\xi^2+|x|^{2\gamma}\eta^2$). Then to obtain a contradiction, we just need to check the geometric control condition: there exists $T_0>0$ such that any trajectory of the Melrose-Sj\"ostrand flow enters $\omega$ within time $T_0$; but we recall that only trajectories corresponding to $|\eta|\in (b_0,b_0^{-1})$ are considered here. We omit the standard steps of constructing the semi-classical measure and proving the invariance of the measure\footnote{The argument is the same as in the Baouendi-Grushin-context $\gamma=1$ handled in \cite[Section 5]{BuSun}.}, and only proceed to check the geometric control condition.
 
For the principal symbol
$$ p(x,y;\xi,\eta)=\xi^2+|x|^{2\gamma}\eta^2,\quad \gamma>1,
$$  
the Hamiltonian flow is given by the ODE
\begin{align}\label{hamiltonianflow}
\begin{cases}
& \dot{x}=\partial_{\xi}p=2\xi\\
& \dot{\xi}=-\partial_x p=-2\gamma|x|^{2(\gamma-1)}x\eta^2\\
& \dot{y}=2|x|^{2\gamma}\eta\\
&\dot{\eta}=0.
\end{cases}
\end{align}
Thanks to the integrability of \eqref{hamiltonianflow}, we can define the Melrose-Sj\"ostrand flow associated with the symbol $p$ on the compressed cotangent bundle $^bT^*\overline{M}$.\footnote{In our specific example, the flow in the interior is defined via \eqref{hamiltonianflow}; when it reaches the boundary, the flow is continued directly at diffractive points and by reflection at hyperbolic points. There is no higher order contact in this simple geometry, see \cite[Section 5]{BuSun}.} We will denote by $\varphi_s(\cdot)$ this flow.  
\begin{remark}
The assumption that $\gamma\geq 1$ is used here, since otherwise the coefficients of \eqref{hamiltonianflow} are not Lipschitz and the Cauchy-Lipschitz theorem does not allow us to conclude that its solutions are unique.
\end{remark}
\begin{lemma}\label{dynamical} 
	Assume that $\gamma\geq 1$ and $\omega\subset (-1,1)\times\T$ is a horizontal strip. There exist $T_0>0, c_0>0$, such that for all $\rho_0=(x_0,y_0;\xi_0,\eta_0)$ with $|\eta_0|\in\big(b_0,b_0^{-1}\big)$ and $p(x_0,y_0;\xi_0,\eta_0)=p_0\in \big(\frac{1}{2},2\big)$, there holds
	$$ \frac{1}{T_0}\int_0^{T_0}\mathbf{1}_{\omega}(\varphi_s(\rho_0))ds\geq c_0>0.
	$$
	In particular, the geometric control condition (GCC) holds for $\omega$.
\end{lemma}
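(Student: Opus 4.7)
The plan is to exploit the monotonicity of $y(s)$ along the flow and to show that its average speed is uniformly bounded away from zero. Conservation of $\eta$ and of $p$ gives $\eta(s)\equiv\eta_0$ with $|\eta_0|\in(b_0,b_0^{-1})$ and $\xi(s)^2+|x(s)|^{2\gamma}\eta_0^2\equiv p_0\in(\tfrac12,2)$; in particular $\dot y=2|x|^{2\gamma}\eta_0$ has constant sign, so $y$ is strictly monotone along the trajectory, and $|\dot y|\leq 2|\eta_0|\leq 2b_0^{-1}$. The $x$-component obeys the autonomous one-dimensional motion $\ddot x=-2\gamma|x|^{2(\gamma-1)}x\eta_0^2$ at energy $p_0$, which is periodic of some period $T_x$. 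With $x_{\max}:=(p_0/\eta_0^2)^{1/(2\gamma)}$, either $x_{\max}<1$ (Case 1, pure oscillation between $\pm x_{\max}$), or $x_{\max}\geq 1$ (Case 2, where the Melrose-Sj\"ostrand flow reflects $\xi\mapsto-\xi$ at the hyperbolic boundary points $x=\pm 1$, which carry momentum $\pm\sqrt{p_0-\eta_0^2}$; the single glancing value $p_0=\eta_0^2$ is absorbed by continuity). Bounding $T_x$ by replacing $\sqrt{p_0-|x|^{2\gamma}\eta_0^2}$ with $\sqrt{p_0(1-|x|^{2\gamma})}$ in Case 2, and by an analogous change of variable in Case 1, yields a uniform upper bound $T_x\leq T_x^{\max}:=2\sqrt{2}\,C_\gamma$ with $C_\gamma:=\int_0^1 du/\sqrt{1-u^{2\gamma}}<\infty$.

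The core step is a uniform lower bound on the one-period average
\begin{equation*}
\bar v\;=\;\frac{y(T_x)-y(0)}{T_x}\;=\;\frac{2\eta_0}{T_x}\int_0^{T_x}|x(s)|^{2\gamma}\,ds.
\end{equation*}
In Case 1, integrating $\tfrac{d}{dt}(x\xi)=2\xi^2-2\gamma|x|^{2\gamma}\eta_0^2$ over one period (with vanishing boundary terms) combined with energy conservation gives the virial-type identity $\int_0^{T_x}|x|^{2\gamma}\,ds=p_0T_x/((1+\gamma)\eta_0^2)$, and hence $|\bar v|=2p_0/((1+\gamma)|\eta_0|)\geq b_0/(1+\gamma)$. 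In Case 2 the same integration by parts acquires a reflection contribution (each of the two per-period jumps in $x\xi$ equals $-2\sqrt{p_0-\eta_0^2}$), leading to
\begin{equation*}
\bar v\;=\;\frac{2}{(1+\gamma)\eta_0}\left(p_0-\frac{2\sqrt{p_0-\eta_0^2}}{T_x}\right).
\end{equation*}
To control the correction I would use the trivial bound $T_x\geq 2/\sqrt{p_0}$ to get $2\sqrt{p_0-\eta_0^2}/T_x\leq \sqrt{p_0(p_0-\eta_0^2)}$, together with the elementary inequality $p_0-\sqrt{p_0(p_0-\eta_0^2)}\geq \eta_0^2/2$ (which follows from $1-\sqrt{1-u}\geq u/2$ for $u\in[0,1]$), obtaining $|\bar v|\geq|\eta_0|/(1+\gamma)\geq b_0/(1+\gamma)$ again. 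Thus $v_{\min}:=b_0/(1+\gamma)$ is a uniform lower bound in both cases.

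With $v_{\min}$ and $T_x^{\max}$ in hand, I choose $T_0:=2\pi/v_{\min}+T_x^{\max}$. Monotonicity of $y$ over consecutive full periods gives $|y(T_0)-y(0)|\geq v_{\min}(T_0-T_x^{\max})\geq 2\pi$, so the lifted trajectory $y(\cdot)\in\R$ crosses at least one translate $I+2\pi k$ during $[0,T_0]$. During such a crossing, $y$ advances by $|I|$ at speed bounded by $2b_0^{-1}$, so the crossing lasts at least $|I|b_0/2$. Since $x(s)\in(-1,1)$ throughout, this entire sojourn takes place in $\omega=(-1,1)\times I$, giving $\int_0^{T_0}\mathbf{1}_\omega(\varphi_s(\rho_0))\,ds\geq|I|b_0/2$ and hence the conclusion with $c_0:=|I|b_0/(2T_0)>0$. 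The main obstacle is the Case 2 estimate: a naive IBP over a general window $[0,T]$ produces a boundary sum $\sim N\sqrt{p_0-\eta_0^2}$ from the $N\lesssim T$ reflections which, bounded crudely, can overwhelm the principal term $p_0T/|\eta_0|$. The resolution is to work with the exact one-period formula for $\bar v$ above and to exploit the inequality $p_0-\sqrt{p_0(p_0-\eta_0^2)}\geq\eta_0^2/2$, which converts the reflection correction into a quantity still controlled from below by $\eta_0^2\geq b_0^2$.
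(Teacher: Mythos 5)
Your argument is correct, but it takes a genuinely different route from the paper's. The paper never uses the global structure of the $x$-motion: it observes that $|x|$ cannot linger in $(-\delta,\delta)$ (there $|\dot{x}|\geq\sqrt{p_0}$ by the energy identity), and that once $|x|=\delta$ it takes time at least $\delta/(4\sqrt{p_0})$ before $|x|$ can drop to $\delta/2$; during that window $\dot{y}=2\eta_0|x|^{2\gamma}$ is bounded below, so $y$ advances by a fixed amount of order $b_0\delta^{2\gamma+1}$ within every time window of length of order $\delta$, giving $T_0\sim\delta^{-2\gamma}$. You instead exploit the periodicity of the one-dimensional $x$-motion and the virial identity $\frac{d}{ds}(x\xi)=2\xi^2-2\gamma|x|^{2\gamma}\eta_0^2$ to compute the period-averaged vertical speed in closed form, including the reflection corrections at $x=\pm1$. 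Your jump computation ($-2\sqrt{p_0-\eta_0^2}$ at each of the two reflections per period), the bound $T_x\geq 2/\sqrt{p_0}$, and the inequality $p_0-\sqrt{p_0(p_0-\eta_0^2)}\geq\eta_0^2/2$ all check out, and they yield the cleaner, uniform bound $|\bar v|\geq b_0/(1+\gamma)$, which is sharper than the paper's $\delta^{2\gamma}$-degenerating estimate. The price is that you must handle the boundary billiard explicitly, which the paper's local-in-time argument sidesteps for free (its estimates only involve $|\dot x|$ and the sign of $\dot y$, both unaffected by reflections).

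One small imprecision in your final step: an advance of $y$ by exactly $2\pi$ does not guarantee a full crossing of a single translate $I+2\pi k$ (take $I=(0,\pi)$ and $y$ ranging over $[\pi/2,5\pi/2]$: the trajectory only partially meets $I$ and $I+2\pi$). The conclusion survives nonetheless: since $\mathbf{1}_{\bigcup_k(I+2\pi k)}$ is $2\pi$-periodic, the $y$-length of the sojourn in $\omega$ over any monotone $y$-range of length $2\pi$ is exactly $|I|$, and the speed bound $|\dot y|\leq 2b_0^{-1}$ converts this into $\int_0^{T_0}\mathbf{1}_{\omega}(\varphi_s(\rho_0))\,ds\geq |I|b_0/2$ even when the sojourn is split between two partial translates. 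Alternatively, take $T_0=4\pi/v_{\min}+T_x^{\max}$ so that $y$ advances by $4\pi$ and a full crossing of some translate is guaranteed. Either fix is immediate.
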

\begin{proof}
	It suffices to show that any trajectory $\varphi_s(\rho_0)$ satisfying
	$$ p(\rho_0)=p_0\in\big(\frac{1}{2},2\big),\quad |\eta_0|\in\big(b_0,b_0^{-1}\big)
	$$
	will enter the interior of $\omega$ before some uniform time $T_0>0$. 
	By shifting the $y$ variable we may assume that $y_0=0$. Without loss of generality we can also assume that $\eta_0>0$. Let $\varphi_s(\rho_0)=(x(s),y(s);\xi(s),\eta(s))$. Note that $\eta(s)=\eta_0\neq 0$, so that $x(\cdot)$ is periodic. Moreover, we have the first integrals 
	\begin{equation}\label{e:firstint} p_0=\frac{1}{4}|\dot{x}(s)|^2+|x(s)|^{2\gamma}\eta_0^2,\quad y(s)=2\eta_0\int_0^s|x(s')|^{2\gamma} ds' \; (\text{mod } 2\pi)
	\end{equation}
	 In a nutshell, to show that the flow reaches $\omega$, we first notice that $y(\cdot)$ evolves in a monotone way in $\T$, and that the larger $|x|$ is, the more $y$ varies. Now, if $|x|$ remains too small, then \eqref{e:firstint} gives that $|\dot{x}|\sim 2\sqrt{p_0}$, which implies that $|x|$ cannot remain too small, thus a contradiction.
	
	To put it into a rigorous form, consider the interval $J_{\delta}=(-\delta,\delta)$ (for the $x$ variable) for $0<\delta\ll 1$. For $\delta>0$ sufficiently small (not depending on $|\eta_0|\in (b_0,b_0^{-1})$) and if $x(s)\in J_\delta$, using \eqref{e:firstint}, we have
	$ |\dot{x}(s)|\geq \sqrt{p_0}$. Therefore, following the flow, it takes a time at most $\tau_0:=\frac{2\delta}{\sqrt{p_0}}$ to leave the regionl $J_{\delta}\times \T$. \\
	Let us fix $s_0$ such that $x(s_0)\in J_\delta$ (if it does not exist, we are done thanks to the second relation in \eqref{e:firstint}). We know that there exists $s_0\leq s_1\leq s_0+\tau_0$ such that $|x(s_1)|=\delta$. We consider the minimal time $s_2\geq s_1$ such that $|x(s_2)|=\frac{\delta}{2}$. Since $\|\dot{x}\|_{\infty}\leq 2\sqrt{p_0}$ (thanks to \eqref{e:firstint}), we know that $s_2\geq s_3:=s_1+\frac{\delta}{4\sqrt{p_0}}$. Finally,
	\begin{equation*}
	y(s_3)-y(s_0)=2\eta_0\int_{s_0}^{s_3}|x(s')|^{2\gamma}ds'\geq 2b_0\int_{s_1}^{s_3}\left(\frac{\delta}{2}\right)^{2\gamma}ds'=\frac{b_0\delta}{2\sqrt{p_0}}\left(\frac{\delta}{2}\right)^{2\gamma}.
	\end{equation*}
	In other words, in any case, $y$ increases of at least $\frac{b_0\delta}{2\sqrt{p_0}}\left(\frac{\delta}{2}\right)^{2\gamma}$ within any time period of length $\tau_0+\frac{\delta}{4\sqrt{p_0}}\leq \frac{3\delta}{\sqrt{p_0}}$. Hence, the result holds for some $T_0$ of order $\delta^{-2\gamma}$.
\end{proof}



\subsection{Horizontal propagation regime I} \label{s:hor1}

		Now we treat the regime $|D_y|\leq b_0h^{-1}$. We set
		$ \kappa_h:=\psi(h^2\Delta_{\gamma})\chi_0(b_0^{-1}hD_y)v_h.
		$
		To finish the proof of Theorem \ref{t:resgamma},  it remains to show that
		\begin{align}\label{regimehorizontal1} 
		\|\kappa_h\|_{L^2(M)}=o(1),\quad h\rightarrow 0.
		\end{align}
	Let $\mu$ be a semi-classical measure associated to a subsequence of $(\kappa_h)_{h>0}$. Since it is invariant along the Hamiltonian flow associated with $p=\xi^2+|x|^{2\gamma}\eta^2$ subject to the reflection and diffraction at the boundary. Since $\mu\mathbf{1}_{\omega}=0$ and $\omega$ is a horizontal strip (or union of horizontal strips), we deduce that the only possible place where the defect measure concentrates is the set $\{\eta=0\}$ on which the trajectories are horizontal. To exclude this possibility, we need to  decompose $|D_y|$ in a finer way. For some small parameter $\epsilon>0$ to be chosen later, we let
$$ \kappa_h^{\epsilon}=(1-\chi_0(h^{\epsilon}D_y))\kappa_h,\quad \kappa_{h,\epsilon}=\chi_0(h^{\epsilon}D_y)\kappa_h.
$$
Our goal of this subsection  is to show that
\begin{align}\label{regimehorizontal1.1}
 \|\kappa_h^{\epsilon}\|_{L^2(M)}=o(1),\quad h\rightarrow 0
\end{align} 
We use the positive commutator method (already used in Section \ref{s:degenerate}) with the relation
\begin{align*}
[h^2\Delta_{\gamma}+1,\phi(y)y\partial_y]=&2\phi(y)|x|^{2\gamma}(h\partial_y)^2+2y\phi'(y)|x|^{2\gamma}(h\partial_y)^2\\
+& h^2\phi''(y)y|x|^{2\gamma}\partial_y+2h^2\phi'(y)|x|^{2\gamma}\partial_y,
\end{align*}	
where $\phi$ has been introduced in Section \ref{s:degenerate}.
As in Section \ref{s:degenerate}, we compute the inner product $([h^2\Delta_{\gamma}+1,\phi(y)y\partial_y]\kappa_h^{\epsilon},\kappa_h^{\epsilon})_{L^2(M)}$ in two ways, and using Cauchy-Schwarz, it gives
\begin{align*}
\|\phi(y)^{1/2}h|x|^{\gamma}\partial_y\kappa_h^{\epsilon}\|_{L^2(M)}^2\leq & Ch\||x|^{\gamma}h\partial_y\kappa_{\epsilon}^h\|_{L^2(M)}\|\kappa_h^{\epsilon}\|_{L^2(M)}+Ch^2\|\kappa_h^{\epsilon}\|_{L^2(M)}^2\\+&C\|\phi'(y)^{1/2}|x|^{\gamma}h\partial_y\kappa_h^{\epsilon}\|_{L^2(M)}^2
+Ch^{-1}\|f_h\|_{L^2(M)}\|h\partial_y\kappa_h^{\epsilon}\|_{L^2(M)}. 
\end{align*}	
Using Young's inequality, we deduce that for any $\delta>0$, for any sufficiently small $h>0$, 
\begin{align}
\|\phi(y)^{1/2}|x|^{\gamma}h\partial_y\kappa_h^{\epsilon}\|_{L^2(M)}^2\leq & \delta \|h\partial_y\kappa_h^{\epsilon}\|_{L^2(M)}^2+C\||x|^{\gamma}h\partial_y\kappa_h^{\epsilon}\|_{L^2(\mathrm{supp}(\phi'))}^2\notag \\
+&C(\delta)h^{-2}\|f_h\|_{L^2(M)}^2+C(\delta)h^2\|\kappa_h^{\epsilon}\|_{L^2(M)}^2 \notag
\end{align}
and therefore, using the $\||x|^{\gamma}h\partial_y\kappa_h^{\epsilon}\|_{L^2(\mathrm{supp}(\phi'))}$ term in the right hand side, we obtain
\begin{equation}\label{eq:horizontal1}	
\begin{aligned}
\||x|^{\gamma}h\partial_y\kappa_h^{\epsilon}\|_{L^2(M)}^2\leq  \delta \|h\partial_y\kappa_h^{\epsilon}\|_{L^2(M)}^2+C\||x|^{\gamma}h\partial_y\kappa_h^{\epsilon}\|_{L^2(\mathrm{supp}(\phi'))}^2 \\
+C(\delta)h^{-2}\|f_h\|_{L^2(M)}^2+C(\delta)h^2\|\kappa_h^{\epsilon}\|_{L^2(M)}^2.
\end{aligned}
\end{equation}
		
We need the following lemma, which roughly states that in the horizontal regime, the mass cannot concentrate on $x=0$:
\begin{lemma}\label{propagation:horizontal}
We have 
$$ \|\partial_y\kappa_h^{\epsilon}\|_{L^2(M)}\leq C\||x|^{\gamma}\partial_y\kappa_h^{\epsilon}\|_{L^2(M)}+o(1),
$$
as $h\rightarrow 0$.
\end{lemma}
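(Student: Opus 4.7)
I will argue mode by mode in the Fourier variable $y$. Write $\kappa_h^{\epsilon}(x,y) = \sum_{n\in\Z} \widehat{w}_n(x)\mathrm{e}^{iny}$; by the spectral cutoffs built into $\kappa_h^{\epsilon}$, only frequencies satisfying $c_1 h^{-\epsilon}\leq |n| \leq c_2 b_0 h^{-1}$ contribute, and in particular $|hn|$ is bounded by a constant depending only on $\gamma$ and $b_0$. Each coefficient $\widehat{w}_n$ solves the 1D Dirichlet problem $P_n \widehat{w}_n = -\widehat{(f_h^{\epsilon})}_n$ on $(-1,1)$, where $P_n := -h^2 \partial_x^2 + h^2 n^2 |x|^{2\gamma} - 1$, and via Plancherel the claim reduces to $\sum_n n^2 \|\widehat{w}_n\|_{L^2_x}^2 \leq C \sum_n n^2 \||x|^\gamma \widehat{w}_n\|_{L^2_x}^2 + o(1)$.

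The heart of the proof is a Rellich-type identity on each Fourier mode, obtained by testing $P_n \widehat{w}_n = -\widehat{(f_h^\epsilon)}_n$ against the cut-off multiplier $\chi(x)^2 x \overline{\partial_x \widehat{w}_n}$, where $\chi\in C_c^{\infty}((-1,1))$ satisfies $\chi\equiv 1$ on $\{|x|\leq 1/2\}$ and $\mathrm{supp}(\chi)\subset\{|x|\leq 3/4\}$. Integration by parts (with no boundary contribution at $x=\pm 1$ thanks to $\chi$) and taking real parts yield
\begin{equation*}
\|\chi \widehat{w}_n\|_{L^2_x}^2 + h^2\|\chi \partial_x \widehat{w}_n\|_{L^2_x}^2 = (2\gamma+1)(hn)^2\|\chi|x|^\gamma \widehat{w}_n\|_{L^2_x}^2 - \mathcal{R}_n - 2\Re(\widehat{(f_h^{\epsilon})}_n,\chi^2 x\partial_x \widehat{w}_n),
\end{equation*}
where $\mathcal{R}_n$ collects the terms produced by differentiating $\chi^2 x$ and is therefore supported on $\mathrm{supp}(\chi')\subset\{1/2\leq |x|\leq 3/4\}$. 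On this set $|x|^\gamma\geq 2^{-\gamma}$, so each factor of $\widehat{w}_n$ appearing in $\mathcal{R}_n$ is dominated by $\||x|^\gamma \widehat{w}_n\|_{L^2_x}$; the factors of $\partial_x \widehat{w}_n$ there are similarly controlled (up to an $f_h^\epsilon$-error) via a standard interior Caccioppoli estimate applied to $P_n \widehat{w}_n = -\widehat{(f_h^\epsilon)}_n$, using the boundedness of $|hn|$. Combined with $\|(1-\chi)\widehat{w}_n\|_{L^2_x}^2 \leq 2^{2\gamma}\||x|^\gamma \widehat{w}_n\|_{L^2_x}^2$ (since $|x|\geq 1/2$ on $\mathrm{supp}(1-\chi)$) and $(hn)^2 = O(b_0^2)$, this yields
\begin{equation*}
\|\widehat{w}_n\|_{L^2_x}^2 \leq C(\gamma,b_0)\||x|^\gamma \widehat{w}_n\|_{L^2_x}^2 + C\|\widehat{(f_h^{\epsilon})}_n\|_{L^2_x}\bigl(\|\widehat{w}_n\|_{L^2_x} + \|\partial_x \widehat{w}_n\|_{L^2_x}\bigr).
\end{equation*}

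To conclude, multiply by $n^2$, sum over $n$, and apply Cauchy--Schwarz: the error sum is bounded by $\|\partial_y f_h^{\epsilon}\|_{L^2(M)}\bigl(\|\partial_y \kappa_h^{\epsilon}\|_{L^2(M)} + \|\partial_x\partial_y \kappa_h^{\epsilon}\|_{L^2(M)}\bigr)$. The energy identity applied to $\partial_y \kappa_h^\epsilon$ gives $\|\partial_x \partial_y \kappa_h^\epsilon\|_{L^2(M)} \leq h^{-1}(\|\partial_y \kappa_h^\epsilon\|_{L^2(M)} + o(1))$, and $\|\partial_y f_h^{\epsilon}\|_{L^2(M)} \leq Cb_0 h^{-1}\|f_h\|_{L^2(M)} = o(h^\gamma)$ by \eqref{e:seekcontrad}, so the total error is $o(h^{\gamma-1})\|\partial_y \kappa_h^{\epsilon}\|_{L^2(M)} + o(h^{\gamma-1}) = o(1)\|\partial_y \kappa_h^\epsilon\|_{L^2(M)} + o(1)$ precisely when $\gamma\geq 1$. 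Young's inequality absorbs the first term to yield $\|\partial_y \kappa_h^{\epsilon}\|_{L^2(M)}^2 \leq C'(\gamma,b_0)\||x|^\gamma \partial_y \kappa_h^{\epsilon}\|_{L^2(M)}^2 + o(1)$, and taking square roots gives the lemma. The main obstacle I expect is the boundary at $x=\pm 1$: the uncutoff Rellich multiplier $x\partial_x$ produces a boundary contribution $\tfrac{h^2}{2}(|\partial_x \widehat{w}_n(1)|^2 + |\partial_x \widehat{w}_n(-1)|^2)$ which, when $|hn|\lesssim 1$---the ``super-critical'' regime in which eigenfunctions of $P_n$ oscillate across the whole interval with boundary slope $\sim h^{-1}$---is of size $\sim \|\widehat{w}_n\|_{L^2_x}^2$ itself, so $\sum_n n^2$ times this contribution is comparable to $\|\partial_y \kappa_h^{\epsilon}\|^2$ and the unregularized identity cannot close; the cutoff trick moves the difficulty to the interior remainder $\mathcal{R}_n$, which is tractable precisely because $|x|^\gamma$ is bounded below on $\mathrm{supp}(\chi')$.
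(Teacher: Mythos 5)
Your identity and the bookkeeping around it check out, and the proof is correct, but it is a genuinely different implementation from the paper's. The paper sets $z_h=\partial_y\kappa_h^\epsilon$, observes that its semiclassical wavefront set lives where $\xi^2\in(\tfrac12-O(b_0^2),2)$, splits according to the sign of $\xi$ with cutoffs $\psi^\pm(\xi)$, and runs a positive commutator argument with the pseudodifferential escape function $\chi^2(x)\sin(\tfrac{\pi x}{4r_0})(\psi^\pm(\xi))^2$, discarding the $(\psi^\pm)'$ term by the wavefront localization and concluding via sharp G{\aa}rding that $\|z_h\|_{L^2(|x|\le 2r_0)}\le C\|z_h\|_{L^2(r_0<|x|<1)}+o(1)$. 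You instead quantize the \emph{exact} virial multiplier $x\xi$ (cut off in $x$) mode by mode: the point is that $\{\xi^2+|x|^{2\gamma}\eta^2,\,x\xi\}=2\xi^2-2\gamma|x|^{2\gamma}\eta^2=2-2(\gamma+1)|x|^{2\gamma}\eta^2$ on the characteristic set, so the commutator is globally elliptic modulo precisely the quantity $\||x|^\gamma\partial_y\kappa_h^\epsilon\|^2$ that the lemma allows on the right; the homogeneity $x\,\partial_x|x|^{2\gamma}=2\gamma|x|^{2\gamma}$ makes the Rellich identity exact, no $\xi$-localization or G{\aa}rding is needed, and your diagnosis of the boundary obstruction (the wrong-signed $h^2|\partial_x\widehat w_n(\pm1)|^2$ term) and its cure by the cutoff, with the interior remainder $\mathcal R_n$ absorbed because $|x|^\gamma\gtrsim 1$ on $\mathrm{supp}(\chi')$ and the $h^2|w'|^2$ part handled by Caccioppoli, is exactly right. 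What each buys: your argument is elementary and self-contained (one-dimensional integrations by parts, explicit constants, no pseudodifferential calculus), but it leans on the exact separation of variables and the homogeneity of $|x|^{2\gamma}$; the paper's version is heavier but more robust, fits the propagation-of-singularities philosophy used in the other regimes, and would survive perturbations of the potential for which the commutator is only positive rather than exactly computable. The error accounting at the end ($o(h^{\gamma})\cdot h^{-1}\|\partial_y\kappa_h^\epsilon\|$ absorbed by Young using $\gamma\ge1$) is where your route, like the paper's, genuinely uses $\gamma\ge 1$.
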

Let us postpone the proof of Lemma \ref{propagation:horizontal} for the moment and proceed to finish the proof of \eqref{regimehorizontal1.1}. Thanks to \eqref{eq:horizontal1} and Lemma \ref{propagation:horizontal}, by choosing $\delta$ small enough, we have
\begin{align*}
\||x|^{\gamma}\partial_y\kappa_h^{\epsilon}\|_{L^2(M)}^2\leq C\||x|^{\gamma}\partial_y\kappa_h^{\epsilon} \|_{L^2(\mathrm{supp}(\phi') )}^2+C(\delta)o(h^{2(\gamma-1)})+C(\delta)\|\kappa_h^{\epsilon}\|_{L^2(M)}^2+o(1).
\end{align*}
Applying Lemma \ref{propagation:horizontal} again and plugging into the inequality above, we have
\begin{align*}
\|\partial_y\kappa_h^{\epsilon}\|_{L^2(M)}^2+
\||x|^{\gamma}\partial_y\kappa_h^{\epsilon}\|_{L^2(M)}^2\leq C\||x|^{\gamma}\partial_y\kappa_h^{\epsilon} \|_{L^2(\mathrm{supp}(\phi') )}^2+C(\delta)\|\kappa_h^{\epsilon}\|_{L^2(M)}^2+o(1).
\end{align*}
Now since  $\mathcal{F}_y(\kappa_h^{\epsilon})(x,n)=0$, for all $|n|\leq h^{-\epsilon}$, by definition of $\kappa_h^{\epsilon}$, we have $$\|\kappa_h^{\epsilon}\|_{L^2(M)}^2\leq h^{2\epsilon}\|\partial_y\kappa_h^{\epsilon}\|_{L^2(M)}^2,$$   
hence, we have
$$ \|\partial_y\kappa_h^{\epsilon}\|_{L^2(M)}^2+
\||x|^{\gamma}\partial_y\kappa_h^{\epsilon}\|_{L^2(M)}^2\leq C\||x|^{\gamma}\partial_y\kappa_h^{\epsilon} \|_{L^2(\mathrm{supp}(\phi') )}^2+o(1).
$$
Now, we proceed as in Section \ref{s:degenerate}: we insert a smooth cutoff $\phi_1(y)$ such that $\mathrm{supp}(\phi_1)\subset \omega $ and $\phi_1(y)\equiv 1$ on supp$(\phi')$. Hence $\||x|^{\gamma}\partial_y\kappa_h^{\epsilon} \|_{L^2(\mathrm{supp}(\phi') )}^2\leq \||x|^{\gamma}\phi_1(y)\partial_y\kappa_h^{\epsilon} \|^2$. Then we can write
$$ |x|^{\gamma}\phi_1(y)h\partial_y\kappa_h^{\epsilon}=|x|^{\gamma}h\partial_y\psi(h^2\Delta_{\gamma})\chi_0(b_0^{-1}hD_y)(\phi_1(y)v_h )+O_{L^2(M)}(h),
$$
where the second term on the r.h.s. comes from the commutator. Therefore,
\begin{align*}
\|\kappa_h^{\epsilon}\|_{L^2(M)}^2\leq h^{2\epsilon}\|\partial_y\kappa_h^{\epsilon}\|_{L^2(M)}^2\leq o(h^{2\epsilon})+Ch^{2\epsilon}\|\phi_1(y)v_h\|_{L^2(M)}^2+O(h^{2\epsilon}).
\end{align*}
Since $\text{supp}(\phi_1)\subset \omega$, there holds $\|\phi_1(y)v_h\|_{L^2(M)}=o(1)$. The proof of \eqref{regimehorizontal1.1} is complete.

It remains to prove Lemma \ref{propagation:horizontal}:
\begin{proof}[Proof of Lemma \ref{propagation:horizontal}]
This is a variant of horizontal propagation estimates in the spirit of Lemma 6.2 in \cite{BuSun}. However, due to the absent of the time variable, here we need a slightly different argument. The main idea is to use propagation arguments in the horizontal direction in order to ``get out'' from the singular region $x=0$.

Let $z_h=\partial_y\kappa_h^{\epsilon}$. Since $\partial_y$ commutes with $h^2\Delta_{\gamma}+1$, $z_h$ satisfies the equation
$$ (h^2\Delta_{\gamma}+1)z_h=g_h=o_{L^2}(h^{\gamma}),
$$ 
where $g_h=(1-\chi_0(h^{\epsilon}D_y))\chi_0(b_0^{-1}hD_y)\partial_yf_h$.  Let us show that for some $r_0\in (0,\frac{1}{2})$,
\begin{align*}
 \|z_h\|_{L^2(|x|\leq 2r_0)}\leq C(r_0)\|z_h\|_{L^2(r_0<|x|<1)}+o(1)
\end{align*}
as $h\rightarrow 0$, which is sufficient for proving Lemma \ref{propagation:horizontal}. We choose $\psi^{\pm}\in C_c^{\infty}(\R)$ such that $$\psi^{\pm}(\xi)=\begin{cases}
1,\quad &\text{ if }\quad \frac34\sqrt{\frac12-(8C_1)^{\gamma+1}b_0^2 } \leq \pm \xi\leq 2\sqrt{2};\\
0,\quad &\text{ if }\quad  |\xi|>3 \text{ or } |\xi|<\frac{1}{2}\sqrt{\frac{1}{2}-(8C_1)^{\gamma+1}b_0^2}.
\end{cases}
$$ 
Let $\chi\in C_c^{\infty}((0,1))$ such that $\chi(x)=1$ if $|x|\leq r_0$ and $\chi(x)=0$ if $|x|>3r_0/2$. From the localization property of $z_h$, we know that 
\begin{align*}
\mathrm{WF}_h(z_h)&\subset \big\{(x,y;\xi,\eta): p=\xi^2+|x|^{2\gamma}\eta^2\in(\frac{1}{2},2), |\eta|\leq (8C_1)^{\frac{\gamma+1}{2}}b_0 \big\}\\
&\subset \{(x,y,\xi,\eta): \xi\in\mathrm{supp}(\psi^+)\cup\mathrm{supp}(\psi^-) \},
\end{align*} thus it suffices to estimate $\|\chi(x)\psi^{\pm}(hD_x)z_h\|_{L^2(M)}$ and by symmetry we only need to estimate
$\|\chi(x)\psi^{+}(hD_x)z_h\|_{L^2(M)}$.
Moreover, by our choice of $\psi^{\pm}$,
$$ \mathrm{WF}_h(z_h)\cap\{(x,y;\xi,\eta): \xi\in \mathrm{supp}((\psi^{\pm})')\}=\emptyset.
$$

Note that for any (semi-classical) pseudo-differential operator $\mathrm{Op}_h(a)$, compactly supported in the interior of $M$, we have
\begin{align}\label{commutator}
 \frac{1}{ih}\big([\mathrm{Op}_h(a),h^2\Delta_{\gamma}+1]z_h,z_h\big)_{L^2(M)}=o(h^{\gamma-1})=o(1),
\end{align}
thanks to the equation of $z_h$. Now we consider a specific pseudo-differential operator $\mathrm{Op}_h(a^{\pm})$ with principal symbol $\chi^2(x)\sin\big(\frac{\pi x}{4r_0}\big)(\psi^{\pm}(\xi))^2$. By symbolic calculus, 
$$ \frac{1}{ih}[\mathrm{Op}_h(a^{+}),h^2\Delta_{\gamma}+1]=\mathrm{Op}_h(\{\xi^2+|x|^{2\gamma}\eta^2,\chi^2(x)\sin(\frac{\pi x}{4r_0})(\psi^{\pm}(\xi))^2 \})+\mathcal{O}_{L^2\rightarrow L^2}(h).
$$
We compute
\begin{align*}
 &\{\xi^2+|x|^{2\gamma}\eta^2,\chi^2(x)\sin(\frac{\pi x}{4r_0})(\psi^{+}(\xi))^2 \}\\=&2\xi(\psi^{+}(\xi))^2\cdot\frac{\pi}{4r_0}\chi^2(x)\cos\big(\frac{\pi x}{4r_0}\big)+4\xi(\psi^{+}(\xi))^2\chi(x)\chi'(x)\sin\big(\frac{\pi x}{4r_0}\big)\\
 -&4\gamma|x|^{2\gamma-2}x\eta^2\psi^+(\xi)(\psi^{+})'(\xi)\chi^2(x)\sin\big(\frac{\pi x}{4r_0}\big).
\end{align*}
Let
$$ a_1=2\xi(\psi^{+}(\xi))^2\cdot\frac{\pi}{4r_0}\chi^2(x)\cos\big(\frac{\pi x}{4r_0}\big),\quad a_2=4\xi(\psi^{+}(\xi))^2\chi(x)\chi'(x)\sin\big(\frac{\pi x}{4r_0}\big)
$$
and $a_3=-4\gamma|x|^{2\gamma-2}x\eta^2\psi^{+}(\xi)(\psi^{+})'(\xi)\chi^2(x)\sin\big(\frac{\pi x}{4r_0}\big)$. From the property of $\mathrm{WF}_h(z_h)$, we have $(\mathrm{Op}_h(a_3)z_h,z_h)_{L^2(M)}=O(h^N)$, for any $N\in\N$. From the support property of $a_2$, we have
$$ |(\mathrm{Op}_h(a_2)z_h,z_h)_{L^2(M)}|\leq C\|z_h\|_{L^2(r_0<|x|<1)}^2.
$$
Thus from \eqref{commutator}, we have
\begin{align}\label{Op:a1}
 (\mathrm{Op}_h(a_1)z_h,z_h)_{L^2(M)}\leq o(1)+C\|z_h\|_{L^2(r_0<|x|<1)}^2.
\end{align}
Since $a_1\geq c_0$ for some uniform constant $c_0>0$ on $|x|\leq 3r_0/2$, we can decompose $a_1=a_1^{(0)}+a_1^{(1)}$ where $a_1^{(0)}\geq c_0\chi(x)^2(\psi^{+}(\xi))^2$ and $\mathrm{supp}(a_1^{(1)})\subset \{|x|>\frac{3r_0}{2} \}$. Using the sharp G{\aa}rding inequality, we have
$$ (\mathrm{Op}_h(a_1^{(0)})z_h,z_h)_{L^2(M)}\geq c_0(\mathrm{Op}_h(\chi(x)^2(\psi^+(\xi))^2)z_h,z_h)_{L^2(M)}-Ch\|z_h\|_{L^2(M)}^2.
$$
Together with \eqref{Op:a1}, this yields
$$ \|\chi_0(x)\psi(hD_x)^{+}z_h\|_{L^2}^2\leq o(1)+C\|z_h\|_{L^2(r_0<|x|<1)}.
$$
The proof of Lemma \ref{propagation:horizontal} is now complete.
\end{proof}

 
 \subsection{Horizontal propagation regime II}	\label{s:hor2}
 To finish the proof of \eqref{regimehorizontal1} (and hence that of Theorem \ref{t:resgamma}), it remains to show that
 \begin{align}\label{regimehorizontal2} 
 \|\kappa_{h,\epsilon}\|_{L^2(M)}=o(1),\quad h\rightarrow 0,
 \end{align}	
 where $\kappa_{h,\epsilon}=\chi_0(h^{\epsilon}D_y)\kappa_h=\psi(h^2\Delta_{\gamma})\chi_0(h^{\epsilon}D_y)\kappa_h$ with small parameter $\epsilon>0$ to be fixed later.
 In this subsection, we prove the following result which, combined with \eqref{e:seekcontrad}, directly yields \eqref{regimehorizontal2}:
 \begin{prop}\label{Observation-Lowfrequency}
 	There exist $C>0$, $h_0>0$, $\eps_0>0$ such that for all $0<h<h_0$ and $0<\epsilon<\eps_0$, we have
 	\begin{equation*}
 	\|\kappa_{h,\epsilon}\|_{L^2(M)}\leq C\|\kappa_{h,\epsilon}\|_{L^2(\omega)}+Ch^{-2}\|(h^2\Delta_{\gamma}+1)\kappa_{h,\epsilon}\|_{L^2(M)}+Ch^{1-2\epsilon}\|v_h\|_{L^2(M)}.
 	\end{equation*}
 \end{prop}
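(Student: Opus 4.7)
\medskip

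My plan is to exploit that $u := \kappa_{h,\epsilon}$ is doubly localized: its $y$-Fourier modes satisfy $|n|\leq Ch^{-\epsilon}$ (via $\chi_0(h^{\epsilon}D_y)$), and its spectrum under $-h^2\Delta_\gamma$ lies in $[1/2,2]$ (via $\psi(h^2\Delta_\gamma)$). Consequently, $\|h^2|x|^{2\gamma}\partial_y^2 u\|_{L^2(M)}\leq Ch^{2-2\epsilon}\|u\|_{L^2(M)}$, so that $u$ approximately satisfies the one-dimensional Helmholtz equation $(-h^2\partial_x^2-1)u\approx 0$ in $x$, with $y$ as a slowly varying parameter. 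The strategy is therefore to decompose into $y$-Fourier modes, apply a one-dimensional observability inequality to each mode in $x$, and treat the $h^2 n^2|x|^{2\gamma}$ term as a small perturbation; the final step is a normal-form argument that converts the $x$-type observation naturally produced by the 1D estimate into the actual $y$-type observation on $\omega=(-1,1)\times I$.

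\medskip

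\textbf{Step 1 (reduction to one dimension).} Expand $u(x,y)=\sum_{|n|\leq Ch^{-\epsilon}}u_n(x)e^{iny}$. Each $u_n\in H^2\cap H^1_0((-1,1))$ solves
\begin{equation*}
(-h^2\partial_x^2-1)u_n=g_n-V_n u_n,\qquad u_n(\pm 1)=0,
\end{equation*}
with $V_n(x):=h^2 n^2|x|^{2\gamma}$ and $g_n(x):=-\mathcal F_y\bigl((h^2\Delta_\gamma+1)u\bigr)(x,n)$. By Plancherel, $\sum_n\|g_n\|_{L^2_x}^2=\frac{1}{2\pi}\|(h^2\Delta_\gamma+1)u\|_{L^2(M)}^2$, and crucially $\|V_n u_n\|_{L^2_x}\leq h^{2-2\epsilon}\|u_n\|_{L^2_x}$ uniformly in $|n|\leq Ch^{-\epsilon}$.

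\medskip

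\textbf{Step 2 (1D observability plus perturbation).} Establish the classical one-dimensional semiclassical observability estimate: for any non-empty open sub-interval $J\subset(-1,1)$ there exists $C_J>0$ such that, for all $w\in H^2\cap H^1_0((-1,1))$ and all $h\in(0,1]$,
\begin{equation*}
\|w\|_{L^2(-1,1)}^2\leq C_J\Bigl(\|w\|_{L^2(J)}^2+h^{-2}\|(-h^2\partial_x^2-1)w\|_{L^2(-1,1)}^2\Bigr).
\end{equation*}
This follows from a standard multiplier (or Carleman) argument combined with unique continuation at the high-frequency scale $h^{-1}$. Applied to $u_n$ with right-hand side $g_n-V_n u_n$, and using the bound on $V_n u_n$, one picks up an extra term $h^{-2}\|V_n u_n\|^2\leq h^{2-4\epsilon}\|u_n\|^2$ which, for $\epsilon<\tfrac12$ and $h$ small, is absorbed into the left-hand side. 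Summing in $n$ via Plancherel yields
\begin{equation*}
\|u\|_{L^2(M)}^2\leq C\Bigl(\|u\|_{L^2(J\times\mathbb T)}^2+h^{-4}\|(h^2\Delta_\gamma+1)u\|_{L^2(M)}^2\Bigr)+O(h^{2-4\epsilon})\|u\|_{L^2(M)}^2.
\end{equation*}

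\medskip

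\textbf{Step 3 (normal form to replace $J\times\mathbb T$ by $(-1,1)\times I$).} The main obstacle is now to convert the observation over $J\times\mathbb T$ into the physical observation over $\omega=(-1,1)\times I$. This is exactly where the low-frequency localization $|D_y|\leq h^{-\epsilon}$ is essential and where the \emph{normal form method} enters: conjugating by a suitable semiclassical (WKB-type) operator that approximately diagonalizes $-h^2\partial_x^2-1+V_n$ mode by mode, one can redistribute mass in $y$ at the price of commutators of size $O(h^{1-2\epsilon})$, which, together with the perturbation remainder from Step 2, produces precisely the error term $Ch^{1-2\epsilon}\|v_h\|_{L^2(M)}$ appearing in the statement. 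The key technical point, and the hardest step, is to show that this exchange costs no more than $h^{1-2\epsilon}$ in $v_h$-norm; this requires the careful use of the spectral cutoffs $\psi(h^2\Delta_\gamma)$ and $\chi_0(h^{\epsilon}D_y)$ and is the reason $\epsilon$ must be taken sufficiently small.
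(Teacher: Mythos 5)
Your overall strategy diverges from the paper's, and it contains a genuine gap at the decisive step. The paper does not do a mode-by-mode 1D analysis in $x$: it first extends the problem by odd reflection to a torus $\widetilde{\T}_x\times\T_y$, then performs a normal form $w_0=(1-hQ\partial_y^2)v_0$ with $Q$ a pseudodifferential operator in $x$ built from the antiderivative of $M-a(x)^2$, so that the conjugated equation becomes $(h^2\partial_x^2+Mh^2\partial_y^2+1)w_0=O(\|f_0\|)+O(h^{3-4\epsilon})\|v_0\|$ with $M$ the average of $|x|^{2\gamma}$; it then invokes the known resolvent/observability estimate for the \emph{flat} torus operator $\Delta_M=\partial_x^2+M\partial_y^2$ from an \emph{arbitrary} non-empty open set (Proposition \ref{ob:Schrodinger}, due to Burq--Zworski, Anantharaman--Maci\`a, Anantharaman--L\'eautaud). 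That torus result is the real engine of the proof, and it never appears in your argument.

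The gap in your proposal is Step 3. Your Step 2, even granting the 1D semiclassical observability you quote, produces an estimate with observation term $\|u\|_{L^2(J\times\T)}^2$, i.e.\ observation from a \emph{vertical} strip, whereas the statement requires observation from the \emph{horizontal} strip $\omega=(-1,1)_x\times I_y$. These are not interchangeable, and converting one into the other is precisely the whole difficulty of the proposition, not a technical afterthought. Concretely, for a single mode $u_n(x)e^{iny}$ one has $\|u_ne^{iny}\|_{L^2(\omega)}^2=\frac{|I|}{2\pi}\|u_ne^{iny}\|_{L^2(M)}^2$, so the horizontal-strip observation is only obstructed by the cross-terms $\sum_{n\neq m}\widehat{\mathbf{1}_I}(m-n)(u_n,u_m)_{L^2_x}$; controlling this interference between distinct $y$-modes is a genuinely two-dimensional problem. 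Your proposed fix --- ``conjugating by a WKB-type operator that approximately diagonalizes $-h^2\partial_x^2-1+V_n$ mode by mode'' --- acts separately on each Fourier mode in $y$, hence preserves the mode decomposition and cannot ``redistribute mass in $y$'' or produce the required cancellation among cross-terms; the claimed $O(h^{1-2\epsilon})$ commutator cost is asserted, not derived, and there is no identifiable mechanism behind it. Without importing a nontrivial 2D input such as the flat-torus observability from an arbitrary open set (or redoing its proof), Step 3 cannot be completed, and the argument does not establish the proposition.
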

 We follow the normal form method as in \cite[Section 7]{BuSun}, originally inspired by the work \cite{BZ04}. The key point is to search for a microlocal transformation
 $$ w=(1+hQD_y^2)v
 $$
 for some suitable semi-classical pseudo-differential operator $Q=q(x,hD_x)$, such that the conjugated equation (satisfed by $w$) is
 $$ h^2\partial_x^2w+h^2M\partial_y^2w+w=\textrm{ errors},
 $$  
 where 
 $$M = \frac 1 2 \int_{-1}^1 |x|^{2\gamma}dx$$
 is the mean value of $|x|^{2\gamma}$. Roughly speaking, this normal form method puts into a rigorous form the intuition that in the horizontal propagation regime, the vector field $|x|^\gamma\partial_y$ acts as if it were averaged along horizontal trajectories.
 
 Then we will be able to use the following theorem:
 \begin{prop}[
 	\cite{AL14},\cite{BZ04},\cite{AM14}]
 	\label{ob:Schrodinger}
 	Let $\Delta_M=\partial_x^2+M\partial_y^2$. Then for any non-empty open set $\omega_0\subset\T^2$, we have
 	\begin{equation*}
 	\|u\|_{L^2(\T^2)}\leq C\|u\|_{L^2(\omega_0)}+Ch^{-2}\|(h^2\Delta_M+1)u\|_{L^2(\T^2)}.
 	\end{equation*}
 \end{prop}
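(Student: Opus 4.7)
The plan is to deduce the estimate from the observability of the Schr\"odinger equation on the flat torus together with the Burq--Zworski equivalence between observability of a unitary group and a semiclassical resolvent estimate.

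I would first remove the constant $M$ by the rescaling $y'=y/\sqrt{M}$: under this isometry, $\Delta_M=\partial_x^2+M\partial_y^2$ on $\T_x\times\T_y$ becomes the standard flat Laplacian $\Delta=\partial_x^2+\partial_{y'}^2$ on the rectangular torus $\T_x\times(\R/(2\pi/\sqrt{M})\Z)$; the open set $\omega_0$ is mapped to a non-empty open set $\widetilde{\omega}_0$ and the $L^2$ norms are equivalent up to a constant depending only on $M$. It is therefore sufficient to prove the resolvent estimate for $\Delta$ and $\widetilde{\omega}_0$ in place of $\Delta_M$ and $\omega_0$.

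On any flat $2$-torus, Jaffard's theorem (reproved and refined in \cite{AL14} and \cite{AM14}) gives the observability of $i\partial_tu=-\Delta u$ from any non-empty open subset $\widetilde{\omega}_0$ in any positive time. By Theorem~4 of \cite{BZ04}, for a non-negative self-adjoint operator $P$ with compact resolvent, observability of $e^{-itP}$ from $\widetilde{\omega}_0$ in some positive time is equivalent to a resolvent estimate
\begin{equation*}
\|u\|_{L^2}^2\leq C\|u\|_{L^2(\widetilde{\omega}_0)}^2+C\lambda^{-1}\|(P-\lambda)u\|_{L^2}^2,
\end{equation*}
valid for all sufficiently large $\lambda$. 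Applied with $P=-\Delta$ and $\lambda=h^{-2}$, using $P-h^{-2}=-h^{-2}(h^2\Delta+1)$, this yields
\begin{equation*}
\|u\|_{L^2}\leq C\|u\|_{L^2(\widetilde{\omega}_0)}+Ch^{-1}\|(h^2\Delta+1)u\|_{L^2},
\end{equation*}
which is a fortiori the $h^{-2}$ bound of the proposition (since $h\leq 1$). Transporting this back through the rescaling recovers the statement on $\T^2$.

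No step really poses an obstacle: the content of the proposition is essentially a repackaging of Jaffard's observability theorem on the torus, and both the change of variables and the verification of the Burq--Zworski equivalence for $P=-\Delta$ are routine.
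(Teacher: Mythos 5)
The paper offers no proof of this proposition: it is quoted directly from \cite{AL14}, \cite{BZ04}, \cite{AM14}, and your derivation is precisely the standard way of assembling it from those references, so in substance you match the paper. One correction is needed, however. The Burq--Zworski equivalence (Theorem 4 of \cite{BZ04}, essentially reproduced as Theorem \ref{abstract} in the paper) characterizes observability of $e^{-itP}$ from $\widetilde{\omega}_0$ in some \emph{fixed} positive time by a resolvent estimate with a $\lambda$-\emph{uniform} constant, namely $\|u\|_{L^2}^2\leq C\|(P-\lambda)u\|_{L^2}^2+C\|u\|_{L^2(\widetilde{\omega}_0)}^2$, and not by the stronger inequality $\|u\|_{L^2}^2\leq C\lambda^{-1}\|(P-\lambda)u\|_{L^2}^2+C\|u\|_{L^2(\widetilde{\omega}_0)}^2$ that you write; the latter would correspond to observability in times shrinking like $\lambda^{-1/2}$ and is not what the cited theorem provides. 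This misquotation is what produces your intermediate bound $Ch^{-1}\|(h^2\Delta+1)u\|_{L^2}$, which is not justified (and is strictly stronger than the proposition). With the correct form of the equivalence, writing $P-h^{-2}=-h^{-2}(h^2\Delta+1)$ gives exactly the $Ch^{-2}$ bound claimed in the proposition, so your argument closes once the quotation is fixed; the rescaling $y'=y/\sqrt{M}$ and the appeal to observability of the Schr\"odinger group on an arbitrary flat torus from any non-empty open set (\cite{AM14}, refining Jaffard's theorem) are both fine.
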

 
 However,  dealing with Dirichlet boundary value problem induces difficulties and consequently,  we prefered to extend the analysis to the periodic setting. First we introduce several notations. Let
 $$ \widetilde{\mathbb{T}}:=[-1,3]\slash\{-1,3\} \quad \textrm{ and }\quad \widetilde{\mathbb{T}}^2:=\widetilde{\mathbb{T}}_x\times \mathbb{T}_y.
 $$
 Define
 $$ a(x)=|x|^{\gamma}, \textrm{ if }|x|\leq 1 \textrm{ and } a(x)=|2-x|^{\gamma}, \textrm{ if } 1\leq x\leq 3,
 $$
 and the operator
 $$ P_a:=\partial_x^2+a(x)^2\partial_y^2.
 $$ 
 Note that $a(x)$ and $a(x)^2$ are Lipschitz functions on $\widetilde{\mathbb{T}}$.
Let
 $$ H_a^k(\widetilde{\mathbb{T}}^2):=\{f\in \mathcal{D}'(\widetilde{\mathbb{T}}^2): P_a^jf\in L^2(\widetilde{\mathbb{T}}^2),\forall 0\leq j\leq k \}
 $$
 the associated function spaces and the domain of $P_a$ is $D(P_a)=H_{a}^2(\widetilde{\mathbb{T}}^2).$ 
Recall that $D(\Delta_{\gamma})=H_{\gamma,0}^1(M)\cap H_{\gamma}^2(M)$.
 Consider the extension map:
 $$ \iota_1:D(\Delta_{\gamma})\rightarrow D(P_a), \quad f\mapsto \widetilde{f}, 
 $$
 with
 $$ \widetilde{f}(x,y)=f(x,y),\textrm{ if } |x|\leq 1,\textrm{ and } \widetilde{f}(x,y)=-f(2-x,y),\textrm{ if }1\leq x\leq 3.
 $$
 The mapping $\iota_1$ is the odd extension with respect to $x=1$. Note that for $f\in C^{\infty}(\overline{M} )$, we have
 $$ \partial_xf|_{x=1-}=\partial_x(\iota_1f)|_{x=1+}.
 $$
 Recall the following lemmas from \cite[Section 7]{BuSun}:
 \begin{lemma}[\cite{BuSun}]\label{embedding1}
 	The extension map $\iota_1: D(\Delta_{\gamma})\rightarrow D(P_a)$ is continuous. Moreover, for all $f\in D(\Delta_{\gamma})$, $\|\iota_1f\|_{L^2(\widetilde{\T}^2)}=\sqrt{2}\|f\|_{L^2(\Omega)}$.
 \end{lemma}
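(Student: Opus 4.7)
The plan is to verify separately the $L^2$ identity and the $H_a^2$-boundedness. For $f\in D(\Delta_\gamma)\subset H_{\gamma,0}^1(M)$, we have the Dirichlet condition $f|_{x=\pm 1}=0$ and $\Delta_\gamma f\in L^2(M)$. I would split $\widetilde{\mathbb T}^2$ as $M_-:=(-1,1)\times\mathbb T$ and $M_+:=(1,3)\times\mathbb T$; on $M_-$ the map $\iota_1$ is the identity, while on $M_+$ it is the pullback by the reflection $x\mapsto 2-x$ composed with a sign change. Since $a(x)=|x|^\gamma$ on $M_-$ and $a(x)=|2-x|^\gamma$ on $M_+$, the reflection exchanges these two weights.

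The $L^2$ identity is immediate: on $M_+$ the change of variables $u=2-x$ gives
\begin{equation*}
\int_{M_+}|\widetilde f(x,y)|^2\,dx\,dy=\int_{M_-}|f(u,y)|^2\,du\,dy=\|f\|_{L^2(M)}^2,
\end{equation*}
so summing over $M_-$ and $M_+$ yields $\|\iota_1 f\|_{L^2(\widetilde{\mathbb T}^2)}^2=2\|f\|_{L^2(M)}^2$.

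For the $D(P_a)$-bound, I would first establish regularity across the two gluing points $x=1$ and $x=-1\sim x=3$. At $x=1$, both one-sided traces of $\widetilde f$ vanish by the Dirichlet condition, and a direct computation gives $\partial_x\widetilde f(1^+,y)=f_x(1,y)=\partial_x\widetilde f(1^-,y)$, so $\widetilde f$ is $C^1$ across $x=1$. At $x=-1\sim 3$, both traces vanish again and $\partial_x\widetilde f(3^-,y)=f_x(-1,y)=\partial_x\widetilde f((-1)^+,y)$. Hence $\widetilde f$ and $\partial_x\widetilde f$ are globally continuous on $\widetilde{\mathbb T}^2$, which guarantees that the distributional derivative $\partial_x^2\widetilde f$ coincides with the piecewise classical one (no boundary Dirac masses appear, since deltas can only arise from jumps of $\widetilde f$ or of $\partial_x\widetilde f$). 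A direct chain-rule computation then shows
\begin{equation*}
P_a\widetilde f(x,y)=\begin{cases}\Delta_\gamma f(x,y),& |x|<1,\\-\Delta_\gamma f(2-x,y),& 1<x<3,\end{cases}
\end{equation*}
and by the same reflection argument as above, $\|P_a\widetilde f\|_{L^2(\widetilde{\mathbb T}^2)}^2=2\|\Delta_\gamma f\|_{L^2(M)}^2$. Combined with the $L^2$ identity, this proves that $\iota_1$ maps $D(\Delta_\gamma)$ continuously into $D(P_a)=H_a^2(\widetilde{\mathbb T}^2)$ with norm $\sqrt 2$.

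The main delicate point is the $C^1$-matching at the gluing $x=1$: the Dirichlet condition $f|_{x=\pm 1}=0$ is essential for making both the trace and the first derivative continuous across the reflection, and it is precisely this matching that prevents $\partial_x^2\widetilde f$ from acquiring a singular part. The routine verification should be done by approximation: for $f\in C^\infty_c(M)\cap D(\Delta_\gamma)$ the chain rule is classical, and one concludes for general $f\in D(\Delta_\gamma)$ by density of smooth functions vanishing near $\{x=\pm 1\}$ in the graph norm of $\Delta_\gamma$, which is standard once one checks that such functions form a core (this core property being the only step that requires genuine work, and which I would invoke from the well-posedness discussion in Appendix \ref{a:wellposed}).
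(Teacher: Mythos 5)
Your overall strategy is sound and, for what it is worth, more detailed than the paper itself, which simply cites \cite{BuSun} for the case $\gamma=1$ and observes that the argument carries over verbatim to $\gamma\geq 1$. The $L^2$ identity via the change of variables $u=2-x$, the computation of the one-sided traces at the two gluing points, and the identification of $P_a\widetilde f$ with $\pm\Delta_\gamma f$ composed with the reflection are all correct; the Dirichlet condition is indeed what makes both $\widetilde f$ and $\partial_x\widetilde f$ match across $x=1$ and $x=-1\sim 3$, and this matching is the whole content of the lemma.

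The gap is in the step you yourself flag as ``the only step that requires genuine work'': smooth functions vanishing \emph{near} $\{x=\pm 1\}$ do \emph{not} form a core for $\Delta_\gamma$ in the graph norm. Near $x=\pm 1$ the operator $\Delta_\gamma$ is uniformly elliptic, so elliptic regularity up to the boundary shows that graph-norm convergence of a sequence $f_n\in C_c^\infty(M)$ forces convergence in $H^2$ of a boundary neighborhood, hence convergence of the Neumann traces $\partial_x f_n|_{x=\pm 1}=0$; the graph-norm closure of $C_c^\infty(M)$ therefore only contains functions with vanishing normal derivative at $x=\pm 1$ and misses, e.g., every Dirichlet eigenfunction of $\Delta_\gamma$ (whose normal derivative is nonzero by the Hopf lemma). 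Worse, restricting to such functions would render your $C^1$-matching computation vacuous, since for them everything vanishes near the interface, whereas the nontrivial case is precisely a nonzero normal derivative. Two standard repairs: (i) dispense with approximation and argue directly at the $H^2$ level --- local elliptic regularity near $x=\pm 1$ gives $f\in H^2$ there, so $f$ and $\partial_x f$ have $L^2$ traces on the interfaces, and the equality of these traces from the two sides is exactly the condition under which the piecewise-$H^2$ function $\widetilde f$ is $H^2$ across the interface with no singular part in $\partial_x^2\widetilde f$; or (ii) approximate by finite linear combinations of eigenfunctions, which are dense in $D(\Delta_\gamma)$ for the graph norm by the spectral characterization of the domain and are smooth up to $x=\pm 1$ (again by ellipticity there), vanish on the boundary, but have nonvanishing normal derivative, so that your matching computation applies to them nontrivially. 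A minor side remark: read literally, the paper's definition of $D(P_a)=H_a^2$ would require $P_a^2\widetilde f\in L^2$, which neither your argument nor the lemma can deliver; it is clearly intended, by analogy with $H_\gamma^k$, that $H_a^2$ only asks for $f$ and $P_af$ in $L^2$, and under that reading your two norm identities give the continuity with constant $\sqrt2$.
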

 Note that this result was only proved for $\gamma=1$ in \cite[Section 7]{BuSun}, but the proof given there works without any modification for general $\gamma\geq 1$. 
 
 \begin{lemma}[\cite{BuSun}]\label{commutation}
 	Let $S_1,S_2$ be two self-ajoint operators on Banach spaces $E_1, E_2$ with domains $D(S_1), D(S_2)$ respectively. Assume that $j: D(S_1)\rightarrow D(S_2)$ is a continuous embedding and that there holds $j\circ S_1=S_2\circ j$. Then, for any Schwartz function $g\in \mathcal{S}(\R)$, we have
 	$$ j\circ g(S_1)=g(S_2)\circ j
 	$$   	
 \end{lemma}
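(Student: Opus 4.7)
The plan is to reduce the intertwining identity from the Schwartz functional calculus to the level of the unitary (or $C_0$-) groups generated by $S_1$ and $S_2$. For $g\in\mathcal{S}(\R)$, Fourier inversion gives $g(\lambda)=(2\pi)^{-1}\int_\R \widehat{g}(\tau)e^{i\tau\lambda}\,d\tau$, and the spectral theorem (in the applications of this paper $E_1$ and $E_2$ are the Hilbert spaces $L^2(M)$ and $L^2(\widetilde{\T}^2)$, so Stone's theorem applies) yields the Bochner integral representation
\begin{equation*}
g(S_k)=\frac{1}{2\pi}\int_\R \widehat{g}(\tau)\,e^{i\tau S_k}\,d\tau,\qquad k=1,2,
\end{equation*}
the integrals converging in the strong operator topology because $\widehat{g}$ is Schwartz and each $e^{i\tau S_k}$ is bounded uniformly in $\tau\in\R$.

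The key step is to establish the group-level intertwining
\begin{equation*}
j\circ e^{i\tau S_1}=e^{i\tau S_2}\circ j\qquad\text{on }E_1,\ \forall\tau\in\R.
\end{equation*}
I would fix $\phi\in D(S_1)$ and compare the two $E_2$-valued curves $u(\tau):=j(e^{i\tau S_1}\phi)$ and $v(\tau):=e^{i\tau S_2}(j\phi)$, which agree at $\tau=0$. By Stone's theorem, $e^{i\tau S_1}\phi$ stays in $D(S_1)$ for all $\tau$ and is continuously differentiable as a curve in $E_1$ with derivative $iS_1e^{i\tau S_1}\phi$. Continuity of $j:D(S_1)\to D(S_2)$ combined with the hypothesis $j\circ S_1=S_2\circ j$ then gives
\begin{equation*}
u'(\tau)=j\bigl(iS_1e^{i\tau S_1}\phi\bigr)=iS_2\,u(\tau),
\end{equation*}
while $v'(\tau)=iS_2 v(\tau)$ directly. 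Uniqueness for the Cauchy problem associated with the self-adjoint generator $iS_2$ forces $u\equiv v$ on $D(S_1)$; density of $D(S_1)$ in $E_1$, together with the continuity of $j$ and of $e^{i\tau S_k}$, extends the identity to all of $E_1$.

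Finally, for an arbitrary $\phi\in E_1$, interchanging $j$ with the Bochner integral (which is legitimate by continuity of $j$) produces
\begin{equation*}
j\bigl(g(S_1)\phi\bigr)=\frac{1}{2\pi}\int_\R \widehat{g}(\tau)\,j\bigl(e^{i\tau S_1}\phi\bigr)\,d\tau=\frac{1}{2\pi}\int_\R \widehat{g}(\tau)\,e^{i\tau S_2}(j\phi)\,d\tau=g(S_2)(j\phi),
\end{equation*}
which is the claimed identity. The only nontrivial point is the group-level intertwining: one must transfer the differentiability of $\tau\mapsto e^{i\tau S_1}\phi$ from $E_1$ to $E_2$ through $j$, and this is precisely what the continuous embedding $j:D(S_1)\to D(S_2)$ together with $jS_1=S_2 j$ ensures.
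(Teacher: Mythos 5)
The paper itself does not prove this lemma: it is imported from \cite{BuSun}, where the argument runs through the Helffer--Sj\"ostrand formula $g(S)=\frac{1}{2\pi i}\int_{\C}\ov{\partial}\widetilde{g}(z)(z-S)^{-1}\,dz\wedge d\ov{z}$ together with the purely algebraic observation that $j\circ S_1=S_2\circ j$ forces $j\circ(z-S_1)^{-1}=(z-S_2)^{-1}\circ j$ on $D(S_1)$ for $z\notin\R$. Your route via Fourier inversion and the unitary groups is a legitimate alternative (and you are right that the ``Banach space'' phrasing must be read with $E_1,E_2$ Hilbert, so that Stone's theorem and the spectral theorem are available); the resolvent route has the advantage that the intertwining of $(z-S_k)^{-1}$ is a one-line algebraic consequence of the hypothesis, with no evolution equation to solve and no differentiability to transfer through $j$.

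That said, your key step is not justified as written. To get $u'(\tau)=j(iS_1e^{i\tau S_1}\phi)$ you must apply $j$ to the limit of the difference quotients $h^{-1}\bigl(e^{i(\tau+h)S_1}\phi-e^{i\tau S_1}\phi\bigr)$. For $\phi\in D(S_1)$ these converge only in $E_1$, whereas $j$ is assumed continuous only for the graph norms, $D(S_1)\rightarrow D(S_2)$, and not $E_1\rightarrow E_2$; so ``continuity of $j:D(S_1)\to D(S_2)$'' does not by itself transfer the differentiability, contrary to your closing remark. The standard repair: first take $\phi\in D(S_1^2)$, for which $\tau\mapsto e^{i\tau S_1}\phi$ is $C^1$ with values in $D(S_1)$ equipped with its graph norm (because $\frac{d}{d\tau}S_1e^{i\tau S_1}\phi=iS_1e^{i\tau S_1}(S_1\phi)$ converges in $E_1$); then your computation $u'=iS_2u$ is valid, and uniqueness, e.g. $\frac{d}{d\sigma}\|u(\sigma)-v(\sigma)\|_{E_2}^2=2\Re\bigl(iS_2(u-v),u-v\bigr)_{E_2}=0$, gives the group intertwining on $D(S_1^2)$. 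One then passes to $\phi\in D(S_1)$ using that $D(S_1^2)$ is dense in $D(S_1)$ for the graph norm and that $j$ and $e^{i\tau S_2}$ are continuous there. A final cosmetic point: since $j$ is only defined on $D(S_1)$, the identity $j\circ g(S_1)=g(S_2)\circ j$ is an identity of maps on $D(S_1)$; there is no ``extension to all of $E_1$'' to perform, nor is the continuity of $j$ on $E_1$ that such an extension would require among the hypotheses.
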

 
 Lemma \ref{commutation}  ensures the preservation of the spectral localization property by odd extension procedure.  We deduce from Lemma \ref{commutation} that for any Schwartz function $g:\R\rightarrow \C$,
 \begin{equation*}
 \iota_1\circ g(h^2\Delta_{\gamma})=g(h^2P_a)\circ\iota_1. 
 \end{equation*}
 Consequently, we have the following lemma, reducing the proof of Proposition \ref{Observation-Lowfrequency} to the observability of the extended solutions:
 \begin{lemma}\label{observability:extension}
 	Let $\phi_1(y)$ be a smooth function which is supported in $\omega$. Assume that there exist $h_0,\eps_0>0$ such that for any $0<h<h_0, 0<\epsilon<\eps_0$, the following observability holds for all  $\widetilde{v}\in L^2(\widetilde{\T}^2)$:
 	\begin{align}\label{ob:extension}
 	\|\psi(h^2P_a)\chi_0(h^{\epsilon}D_y)\widetilde{v}\|_{L^2(\widetilde{\T}^2)}^2\leq & C\|(h^2P_a+1)\psi(h^2P_a)\chi_0(h^{\epsilon}D_y)\widetilde{v}\|_{L^2(\widetilde{\T}^2)}^2 \notag\\
 	+ &C\|\phi_1(y)\psi(h^2P_a)\chi_0(h^{\epsilon}D_y)\widetilde{v}(t)\|_{L^2(\widetilde{\T}^2)}^2dt+Ch\|\widetilde{v}\|_{L^2(\widetilde{\T}^2)}^2.
 	\end{align}
 	Then Proposition \ref{Observation-Lowfrequency} is true. More precisely, with the same constant $C>0$, for all $0<h<h_0$, $0<\epsilon<\eps_0$, the resolvent estimate
 	\begin{align*}
 	\|\psi(h^2\Delta_{\gamma})\chi_0(h^{\epsilon}D_y)v\|_{L^2(M)}^2\leq & C\|(h^2\Delta_{\gamma}+1)v\|_{L^2(M)}^2
 	\notag\\
 	+ &C\|\phi_1(y)\psi(h^2\Delta_{\gamma})\chi_0(h^{\epsilon}D_y)v\|_{L^2(M)}^2+Ch\|v\|_{L^2(M)}^2
 	\end{align*}
 	holds for all $v\in L^2(M)$.
 \end{lemma}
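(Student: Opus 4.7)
The idea is to transplant the problem from $M$ to the extended torus $\widetilde{\mathbb{T}}^2$ via the odd reflection $\iota_1$, apply the hypothesis \eqref{ob:extension} there, and pull the resulting estimate back. The two key tools are already available: Lemma \ref{embedding1}, which both identifies the norms ($\|\iota_1 f\|_{L^2(\widetilde{\mathbb{T}}^2)} = \sqrt{2}\,\|f\|_{L^2(M)}$) and gives the intertwining $\iota_1 \circ \Delta_\gamma = P_a\circ \iota_1$ on $D(\Delta_\gamma)$; and Lemma \ref{commutation}, which upgrades this to $\iota_1 \circ g(h^2\Delta_\gamma) = g(h^2 P_a)\circ \iota_1$ for every $g\in \mathcal{S}(\R)$. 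Since the Fourier multiplier $\chi_0(h^\epsilon D_y)$ acts only in the $y$-variable whereas $\iota_1$ reflects in $x$, these two operators commute automatically.

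\textbf{Step 1: transplant.} Given $v\in L^2(M)$, set $\widetilde v := \iota_1 v \in L^2(\widetilde{\mathbb{T}}^2)$ and
\begin{equation*}
u := \psi(h^2\Delta_\gamma)\chi_0(h^\epsilon D_y) v, \qquad \widetilde u := \psi(h^2 P_a)\chi_0(h^\epsilon D_y)\widetilde v.
\end{equation*}
The combined commutations give $\widetilde u = \iota_1 u$, hence $\|\widetilde u\|_{L^2(\widetilde{\mathbb{T}}^2)}^2 = 2\,\|u\|_{L^2(M)}^2$ by Lemma \ref{embedding1}. (It suffices to establish the target inequality for $v\in D(\Delta_\gamma)$, where Lemma \ref{embedding1} applies literally; the conclusion then extends to all $v\in L^2(M)$ by density since every term in the inequality is continuous in $v$ at fixed $h$.)

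\textbf{Step 2: apply the hypothesis.} Plug $\widetilde v$ into \eqref{ob:extension}. For the source term, use that $\psi(h^2\Delta_\gamma)$ and $\chi_0(h^\epsilon D_y)$ commute with $h^2\Delta_\gamma + 1$ and that $\iota_1$ intertwines the two operators, so
\begin{equation*}
(h^2 P_a + 1)\widetilde u = \iota_1\bigl(\psi(h^2\Delta_\gamma)\chi_0(h^\epsilon D_y)(h^2\Delta_\gamma + 1)v\bigr),
\end{equation*}
whose $L^2(\widetilde{\mathbb{T}}^2)$-norm is controlled by $\sqrt{2}\,\|\psi\|_{\infty}\,\|(h^2\Delta_\gamma+1)v\|_{L^2(M)}$. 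For the observation term, $\phi_1(y)$ is a function of $y$ alone and therefore commutes with $\iota_1$, yielding $\|\phi_1\widetilde u\|_{L^2(\widetilde{\mathbb{T}}^2)}^2 = 2\,\|\phi_1 u\|_{L^2(M)}^2$. The remainder term transfers as $h\|\widetilde v\|_{L^2(\widetilde{\mathbb{T}}^2)}^2 = 2h\|v\|_{L^2(M)}^2$. Dividing by $2$ and keeping the same constant $C$ produces the inequality of Proposition \ref{Observation-Lowfrequency}.

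\textbf{Main obstacle.} The only delicate point is the intertwining $\iota_1 \circ \Delta_\gamma = P_a\circ \iota_1$ on $D(\Delta_\gamma)$: one must check that the odd reflection across $x = 1$ sends the Dirichlet trace $v|_{x = \pm 1} = 0$ to a function with no distributional contribution at the gluing surface, and that $a(x)^2$ (which is only Lipschitz across $x=1$) matches $|x|^{2\gamma}$ smoothly enough on either side for $P_a(\iota_1 v) = \iota_1(\Delta_\gamma v)$ to hold in $L^2$. This is precisely the content of Lemma \ref{embedding1}, so once we invoke it everything else reduces to a direct substitution.
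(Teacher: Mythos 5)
Your argument is correct and is exactly the "straightforward" argument the paper has in mind when it omits the proof: transplant by the odd extension $\iota_1$, use Lemma \ref{embedding1} for the norm identity and Lemma \ref{commutation} for the intertwining $\iota_1\circ g(h^2\Delta_\gamma)=g(h^2P_a)\circ\iota_1$, observe that $\chi_0(h^\epsilon D_y)$ and $\phi_1(y)$ commute with $\iota_1$ since the reflection acts only in $x$, and pull the inequality back after dividing by $2$. (The only cosmetic point is that the right-hand term $\|(h^2\Delta_\gamma+1)v\|_{L^2(M)}$ is not continuous on $L^2(M)$, so the "density" remark should instead be that the inequality is vacuous when that term is infinite; this does not affect the validity of the proof.)
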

 The proof of Lemma \ref{observability:extension} is straightforward and we omit the detail.
 \begin{rem}
 	Since the extension operation is done for the $x$-variable, we keep the notation $\phi_1(y)$ for the extension of this function.
 \end{rem}
 Before proving \eqref{ob:extension}, we need a lemma which, modulo errors, allows us to replace the operator $\psi(h^2P_a)\chi_0(h^{\epsilon}D_y)$ by $\psi_1(hD_x)\chi_0(h^{\epsilon}D_y)$.
 \begin{lemma}\label{changing-localization}
 	Let $\psi_1\in C_c^{\infty}(\frac{1}{4}<|\xi|<4)$ such that $\psi_1=1$ on supp$(\psi)$. Then, as a bounded operator on $L^2(\widetilde{\T}^2)$, we have
 	$$ \left(1-\psi_1(hD_x) \right)\psi(h^2P_a)\chi_0(h^{\epsilon}D_y)=O_{L^2\rightarrow L^2}(h^{2-2\epsilon}).
 	$$ 	
 \end{lemma}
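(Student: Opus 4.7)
The strategy is to use Helffer--Sj\"ostrand functional calculus together with a resolvent identity that decouples the two halves of $P_a = \partial_x^2 + a(x)^2\partial_y^2$. Set $A = h^2\partial_x^2$ and $B = h^2 a(x)^2\partial_y^2$, so that $h^2 P_a = A + B$. Picking an almost analytic extension $\tilde\psi \in C_c^\infty(\mathbb{C})$ of $\psi$ with $|\bar\partial \tilde\psi(z)| \leq C_N |\Im z|^N$ for every $N$, I write
\begin{equation*}
\psi(h^2 P_a) = \frac{1}{\pi}\int_{\mathbb{C}} \bar\partial \tilde\psi(z)\,(z-h^2 P_a)^{-1}\,dL(z)
\end{equation*}
and expand the resolvent via $(z-h^2 P_a)^{-1} = (z-A)^{-1} + (z-A)^{-1}\,B\,(z-h^2 P_a)^{-1}$. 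Since $a$ depends only on $x$, the Fourier multiplier $\chi_0(h^\epsilon D_y)$ commutes with $A$, with $B$, and with $h^2 P_a$, hence also with both resolvents.

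The $(z-A)^{-1}$-contribution reassembles, after the $\bar\partial \tilde\psi$ integral, into $(1-\psi_1(hD_x))\,\psi(A)\,\chi_0(h^\epsilon D_y)$. Both $A$ and $\psi_1(hD_x)$ are $x$-Fourier multipliers. The symbol of $\psi(A)$ is $\xi \mapsto \psi(-h^2\xi^2)$, supported on $\{|h\xi| \in (1/\sqrt 2,\sqrt 2)\} \subset (1/2,2) \cup (-2,-1/2) = \mathrm{supp}(\psi)$. By the hypothesis $\psi_1 \equiv 1$ on $\mathrm{supp}(\psi)$, we conclude $(1-\psi_1(hD_x))\,\psi(A) = 0$ identically, so this contribution vanishes exactly.

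The remaining $(z-A)^{-1}\,B\,(z-h^2 P_a)^{-1}$-contribution has integrand
\begin{equation*}
(1-\psi_1(hD_x))\,(z-A)^{-1}\,B\,\chi_0(h^\epsilon D_y)\,(z-h^2 P_a)^{-1}.
\end{equation*}
The crucial gain is the elementary estimate
\begin{equation*}
\|B\,\chi_0(h^\epsilon D_y)\|_{L^2 \to L^2} \leq \|a\|_{L^\infty}^2\,\|h^2 \partial_y^2\,\chi_0(h^\epsilon D_y)\|_{L^2 \to L^2} \leq C h^{2-2\epsilon},
\end{equation*}
which follows from $a \in L^\infty(\widetilde{\mathbb{T}})$ together with the support property $|D_y| \lesssim h^{-\epsilon}$ on $\mathrm{range}(\chi_0(h^\epsilon D_y))$. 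Combining with the self-adjoint resolvent bounds $\|(z-A)^{-1}\|, \|(z-h^2 P_a)^{-1}\| \leq |\Im z|^{-1}$ and the boundedness of $(1-\psi_1(hD_x))$ on $L^2$, the operator norm of the integrand is controlled by $C h^{2-2\epsilon}|\Im z|^{-2}|\bar\partial\tilde\psi(z)|$. The rapid vanishing of $\bar\partial \tilde\psi$ as $\Im z \to 0$ then produces a convergent integral of size $O(h^{2-2\epsilon})$.

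I do not foresee a serious obstacle: the backbone is standard Helffer--Sj\"ostrand machinery, and the $h^{2-2\epsilon}$ gain is structural, arising from the two $\partial_y$'s in $B$ being absorbed by the frequency cutoff $\chi_0(h^\epsilon D_y)$. The only delicate point is the exact cancellation $(1-\psi_1(hD_x))\,\psi(A) = 0$ on the compact torus $\widetilde{\mathbb{T}}_x$, which reduces to comparing the discrete Fourier supports of two commuting multipliers and is immediate.
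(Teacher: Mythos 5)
Your proof is correct and is essentially the paper's own argument: a Helffer--Sj\"ostrand representation of $\psi(h^2P_a)$, the resolvent identity peeling off the free part $(z-h^2\partial_x^2)^{-1}$ (which reassembles into $\psi(h^2\partial_x^2)$ and is killed by $1-\psi_1(hD_x)$), and the gain $h^{2-2\epsilon}$ extracted from the two $\partial_y$'s hitting the cutoff $\chi_0(h^\epsilon D_y)$. The only cosmetic difference is that the paper first reduces by Plancherel in $y$ to the one-dimensional operators $\mathcal{L}_n=-\partial_x^2+n^2a(x)^2$ with $|n|\lesssim h^{-\epsilon}$, whereas you keep the two-dimensional operator and use the commutation of $\chi_0(h^\epsilon D_y)$ with everything, which is equivalent.
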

 \begin{proof}
 As $D_y$ commutes with $D_x$ and $P_a$, by Plancherel, it suffices to show that, uniformly in $|n|\leq (8C_1)^{\frac{1}{\gamma+1}}h^{-\epsilon}$,
 $$ (1-\psi_1(hD_x))\psi(h^2\mathcal{L}_n)=O_{L^2\rightarrow L^2}(h^{2(1-\epsilon)}),
 $$
 where $\mathcal{L}_n=-\partial_x^2+n^2a(x)^2$. The key point here is that $(1-\psi_1(\xi))\psi(\xi)=0$. We will make use of the Helffer-Sj\"ostrand formula (see \cite{DS99} and \cite{BGT04}) :
 $$ \psi(h^2\mathcal{L}_n)=\frac{1}{2\pi i}\int_{\C}\ov{\partial}\widetilde{\psi}(z)(z-h^2\mathcal{L}_n)^{-1}dz\wedge d\ov{z},
 $$
where $\widetilde{\psi}(z)$ is an almost analytic extension of $\psi$, for example
$$ \widetilde{\psi}(z):=\chi(\Im z)\cdot \sum_{n=0}^{N+1}\frac{\psi^{(n)}(\Re z) }{n!} (i\Im z)^n,\quad N\geq 2.
$$
Note that as an operator-valued meromorphic function, we have
$$ (z-h^2\mathcal{L}_n)^{-1}=(z-h^2D_x^2)^{-1}+h^2n^2(z-h^2D_x^2)^{-1}a(x)^2(z-h^2\mathcal{L}_n)^{-1},
$$
we obtain that
\begin{align*}
(1-\psi_1(hD_x))\psi(h^2\mathcal{L}_n)=\frac{h^2n^2}{2\pi i}(1-\psi_1(hD_x))\int_{\C} \ov{\partial}\widetilde{\psi}(z)(z-h^2D_x)^{-1}a(x)^2(z-h^2\mathcal{L}_n)^{-1}dz\wedge d\ov{z},
\end{align*}
where we used the Cauchy integral formula
$$ \psi(h^2D_x)=\frac{1}{2\pi i}\int_{\C}\ov{\partial}\widetilde{\psi}(z)(z-h^2D_x^2)^{-1}dz\wedge d\ov{z}
$$
and $(1-\psi_1(hD_x))\psi(hD_x)=0$.
Using the fact that $|\ov{\partial}\widetilde{\psi}(z)|\leq C_N|\Im z|^N\chi(\Im z)$ and $\|(z-P)^{-1}\|\leq |\Im z|^{-1}$ for any self-adjoint operator $P$, we deduce that
$$ \|(1-\psi_1(hD_x))\psi(h^2\mathcal{L}_n)\|\leq Ch^{2(1-\epsilon)}.
$$
This completes the proof of Lemma \ref{changing-localization}.
 \end{proof}
 
 \begin{proof}[Proof of Proposition \ref{Observation-Lowfrequency}]
 	From Lemma \ref{observability:extension}, it is sufficient to prove  \eqref{ob:extension}. With a little abuse of notation, we denote by $v_0=\widetilde{\kappa}_{h,\epsilon}$ the extension of $\kappa_{h,\epsilon}$, which verifies $v_0=\psi(h^2P_a)\chi_0(h^{\epsilon}D_y)v_0$. We are now in the periodic setting. Yet, we should pay an extra attention to the fact that $P_a=\partial_x^2+a(x)^2\partial_y^2$ is a hypoelliptic operator with only Lipschitz coefficient. More precisely, $a\in \mathrm{Lip}(\widetilde{\T}^2)$ which is not $C^1$ when $\gamma=1$ at $x=1$.

 	Modulo an error $O_{L^2}(h^{2-2\epsilon})\|v_0\|_{L^2(\widetilde{\T}^2)}$, we may assume that $v_0=\psi_1(hD_x)\chi_0(h^{\epsilon}D_y)v_0$. Note that
 	$$ (h^2P_a+1)v_0=f_0:=\iota((h^2\Delta_{\gamma}+1)\kappa_{h,\epsilon}).
 	$$
 	Now we search for the function
 	$$ w_0=(1-hQ\partial_y^2)v_0
 	$$
 	with an $h$-pseudo-differential operator $Q$ acting only on $x$, to be chosen later. Let $M=\frac{1}{4}\int_{-1}^3 a(x)^2dx= \frac 1 2 \int_{-1}^1 |x|^{2\gamma} dx$ be the average of $a(x)^2$ along the horizontal trajectory $y=\mathrm{const.}$ Using the equation $(h^2P_a+1)v_0=f_0$, we have
 	\begin{equation*}
 	\begin{split}
 	(h^2\partial_x^2+Mh^2\partial_y^2)w_0+w_0=&(1-hQ\partial_y^2)(h^2\Delta_{\gamma}+1)v_0+(1-hQ\partial_y^2)(M-a(x)^2)h^2\partial_y^2v_0\\
 	-&\frac{1}{h}[h^2\partial_x^2,Q]h^2\partial_y^2v_0\\
 	=& (1-hQ\partial_y^2)f_0+\big(M-a(x)^2-\frac{1}{h}[h^2\partial_x^2,Q]\big)h^2\partial_y^2v_0\\
 	-&hQ\partial_y^2(M-a(x)^2)h^2\partial_y^2v_0
 	\end{split}
 	\end{equation*}
 	Take $\psi_2\in C_c^{\infty}(1/8\leq |\xi|\leq 8)$, such that $\psi_2\psi_1=\psi_1$. We define the operator
 	$$ Q=\frac{1}{2 i}\left(\int_{-1}^x(M-a(z)^2 )dz\right)(hD_x)^{-1}\psi_2(hD_x),
 	$$ 
 	and set $b(x)=\frac{1}{2 i}\int_{-1}^x(M-a(z)^2 )dz$, $m(hD_x)=(hD_x)^{-1}\psi_2(hD_x)$. Since $a(x)^2-M$ has zero average, the function $b$ is well-defined as a periodic function in the space $C^1(\widetilde{\T})\cap W^{2,\infty}(\widetilde{\T})$. From a direct calculation, we have
 	$$ h[\partial_x^2,Q]=2ib'(x)m(hD_x)hD_x+i[hD_x,b'(x)]m(hD_x).
 	$$
 	Note that $[hD_x,b'(x)]=-ihb''(x)$, and $b''\in L^{\infty}(\widetilde{\T})$, thus
 	$$ \|\big(M-a(x)^2-\frac{1}{h}[h^2\partial_x^2,Q]\big)h^2\partial_y^2v_0\|_{L^2(\widetilde{\T}^2)}=O(h^{3-2\epsilon})\|v_0\|_{L^2(\widetilde{\T}^2)}.
 	$$
 	Therefore,
 	$$ \|(h^2\Delta_M+1)w_0\|_{L^2(\widetilde{\T}^2)}\leq C\|f_0\|_{L^2(\widetilde{\T}^2)}+O(h^{3-4\epsilon})\|v_0\|_{L^2(\widetilde{\T}^2)}.
 	$$
 	where $\Delta_M=\partial_x^2+M\partial_y^2$.
 	Applying Proposition \ref{ob:Schrodinger}, we obtain that
 	\begin{equation*}
 	\begin{split}
 	\|w_0\|_{L^2(\widetilde{\T}^2)}\leq C\|\phi_1(y)w_0\|_{L^2(\widetilde{\T}^2)}+Ch^{-2}\|f_0\|_{L^2(\widetilde{\T}^2)}+Ch^{1-4\epsilon}\|v_0\|_{L^2(\widetilde{T}^2)}.
 	\end{split}
 	\end{equation*}
 	Since $w_0=v_0+O_{L^2(\widetilde{\T}^2)}(h^{1-2\epsilon})\|v_0\|_{L^2(\widetilde{\T}^2)}$ and supp$(\phi_1)\subset \omega$, the proof of Proposition \ref{Observation-Lowfrequency} is now complete.
 \end{proof}
 
 \begin{proof}[End of the proof of Theorem \ref{t:resgamma}]
We choose $\eps$ as in Proposition \ref{Observation-Lowfrequency}. Combining \eqref{e:locvcoercivity}, \eqref{degenerateregime}, \eqref{GCCregime}, \eqref{regimehorizontal1} and \eqref{regimehorizontal2}, we obtain $\|v_h\|_{L^2(M)}=o(1)$, which contradicts \eqref{e:seekcontrad} and proves Theorem \ref{t:resgamma}.
 \end{proof}

\section{Theorem \ref{t:main}: proofs of observability} \label{s:proof12}

\subsection{Localized observability} \label{s:locobssection}

In this section, we prove Point (1) and one part of Point (2) of Theorem \ref{t:main}, namely that observability holds for sufficiently large time in case $s=\frac{\gamma+1}{2}$. The proofs of these two results are both based on the resolvent estimate given by Theorem \ref{t:resgamma}.

In general, we have the following abstract theorem: 
\begin{thm}[\cite{BZ04}]\label{abstract}
	Let $P(h)$ be self-adjoint on some Hilbert space $\mathcal{H}$ with densely defined domain $\mathcal{D}$ and $A(h):\mathcal{D}\rightarrow \mathcal{H}$ be bounded. Fix $\chi_0\in C_c^{\infty}((-b,-a))$.  Assume that uniformly for $\tau\in I=[-b,-a]\subset \R$, we have the following resolvent inequality
	$$ \|u\|_{\mathcal{H}}\leq \frac{G(h)}{h}\|(P(h)+\tau)u\|_{\mathcal{H}}+g(h)\|A(h)u\|_{\mathcal{H}}
	$$
	for some $1\leq G(h)\leq O(h^{-N_0})$.
	Then there exist constants $C_0,c_0,h_0>0$, such that for every $T(h)$ satisfying
	$$ \frac{G(h)}{T(h)}<c_0,
	$$
	we have, $\forall 0<h<h_0$
	$$ \|\chi_0(P(h))u\|_{\mathcal{H}}^2\leq C_0\frac{g(h)^2}{T(h)}\int_0^{T(h)}\|A(h)e^{-\frac{itP(h)}{h}}\chi_0(P(h))u\|_{\mathcal{H}}^2dt,
	$$
	where $\psi\in C_c^{\infty}((a,b))$.
\end{thm}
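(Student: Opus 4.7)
My plan is to follow the Fourier-in-time strategy of \cite{BZ04}. Since $\chi_0(P(h))$ commutes with the semigroup, I first replace $u$ by $u_0 := \chi_0(P(h))u$ and aim to bound $\|u_0\|_\mathcal{H}^2$ by the right-hand side. Setting $v(t) := e^{-itP(h)/h}u_0$, which remains spectrally localized ($v(t) = \chi_0(P(h))v(t)$), I introduce a time cutoff $\chi_T(t) := \chi(t/T(h))$ with $\chi \in C_c^\infty(\R;[0,1])$, $\chi \equiv 1$ on $[0,1]$, and support in $[-1, 2]$, so that $\|\chi_T'\|_{L^2(\R)}^2 \lesssim T(h)^{-1}$. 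The truncation $w(t) := \chi_T(t)v(t)$ satisfies the driven equation $(ih\partial_t - P(h))w = ih\chi_T'(t)v(t)$.

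The key step is to apply the semiclassical Fourier transform in $t$, $\hat{w}(\tau) := \int_\R e^{it\tau/h}w(t)\,dt$, which converts the equation into the pointwise identity $(P(h) - \tau)\hat{w}(\tau) = -ih\,\widehat{\chi_T'v}(\tau)$. With the sign convention of the hypothesis, plugging this into the assumed resolvent estimate at each $\tau$ and squaring gives
\begin{equation*}
\|\hat{w}(\tau)\|_\mathcal{H}^2 \leq 2G(h)^2\|\widehat{\chi_T'v}(\tau)\|_\mathcal{H}^2 + 2g(h)^2\|A(h)\hat{w}(\tau)\|_\mathcal{H}^2.
\end{equation*}
Thanks to the spectral localization $v = \chi_0(P(h))v$ together with the slow variation of $\chi_T$ on the scale $T(h) \gg hG(h)$, the time-Fourier content of $w$ is effectively concentrated in $I$, so this bound applies where it matters. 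Integrating in $\tau$ and invoking Plancherel in $t$ yields
\begin{equation*}
\int_\R \|w(t)\|_\mathcal{H}^2\,dt \lesssim G(h)^2\int_\R |\chi_T'(t)|^2\|v(t)\|_\mathcal{H}^2\,dt + g(h)^2\int_\R \|A(h)w(t)\|_\mathcal{H}^2\,dt.
\end{equation*}

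To conclude, I exploit unitarity: $\|v(t)\|_\mathcal{H} = \|u_0\|_\mathcal{H}$ is constant, so the first right-hand term is at most $CG(h)^2T(h)^{-1}\|u_0\|_\mathcal{H}^2$, while the left-hand side dominates $T(h)\|u_0\|_\mathcal{H}^2$ because $\chi_T \equiv 1$ on $[0,T(h)]$. Dividing through by $T(h)$, the hypothesis $G(h)/T(h) < c_0$ with $c_0$ small enough lets me absorb the $G(h)^2/T(h)^2$ error into the left, and the remaining term becomes the claimed observation norm after enlarging the integration window by a bounded multiple of $T(h)$ (and using the semigroup law to translate back to $[0,T(h)]$). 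The main obstacle I expect is making rigorous the assertion that $\hat{w}$ is effectively Fourier-supported in $I$: this can be handled by writing $\hat{w}(\tau)$ as a convolution of the spectral data of $u_0$ against $\hat{\chi}_T$, and observing that $\hat{\chi}_T$ is localized at scale $h/T(h)$, which is negligible compared to the length $b - a$ of $I$ provided $c_0$ is chosen small enough.
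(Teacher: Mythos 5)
The paper does not prove Theorem \ref{abstract}: it is imported verbatim from \cite{BZ04} and used as a black box, so there is no in-paper proof to compare against. Your argument is precisely the standard Burq--Zworski proof of this result (truncate in time, take the semiclassical Fourier transform, apply the resolvent bound pointwise in $\tau$, use Plancherel and the $T^{-1}$ smallness of $\|\chi_T'\|_{L^2}^2$ to absorb the $G(h)^2/T(h)^2$ error), and it is essentially correct. Two remarks. First, the one genuinely delicate point is exactly the one you flag: the resolvent hypothesis is only available for $\tau$ in the fixed interval $I$, so for $\tau$ outside a neighbourhood of $\mathrm{supp}\,\chi_0$ you must bound $\|\hat w(\tau)\|$ by other means. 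Your fix (writing $\hat w(\tau)$ as the spectral measure of $u_0$ paired against $T(h)\hat\chi\bigl(T(h)(\cdot-\tau)/h\bigr)$ and using that this kernel is concentrated at scale $h/T(h)\ll b-a$) does work; the slightly cleaner standard route is to note that $(P(h)-\tau)$ is invertible on the range of $\chi_0(P(h))$ with norm $\mathrm{dist}(\tau,\mathrm{supp}\,\chi_0)^{-1}$ by the spectral theorem, which gives $\|\hat w(\tau)\|\le C h\|\widehat{\chi_T'v}(\tau)\|$ there — an even better bound than the hypothesis supplies on $I$. Either way the off-$I$ contribution is harmlessly absorbed. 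Second, your appeal to "the sign convention of the hypothesis" papers over the fact that the statement as printed is internally inconsistent ($\chi_0$ supported in $(-b,-a)$, $\tau\in[-b,-a]$, the operator $(P(h)+\tau)$, and a dangling $\psi\in C_c^\infty((a,b))$ in the conclusion); this is an artefact of the paper's transcription of \cite{BZ04} rather than a flaw in your argument, but in a self-contained write-up you should fix the conventions so that the spectral support of $\chi_0(P(h))$ matches the set of $\tau$ for which the resolvent estimate is assumed. The final window translation from $[-T,2T]$ back to $[0,T]$ via unitarity and a relabelling of $T$ is fine.
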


Let us prove Points (1) and (2) of Theorem \ref{t:main}, using Theorems \ref{t:resgamma} and \ref{abstract}. For $s\in\N$, $s\geq 1$, there holds:
$$
(-h^2\Delta_\gamma)^s-1=Q_{h,\gamma}(-h^2\Delta_\gamma-1)
$$
where 
$$
Q_{h,\gamma}=(-h^2\Delta_\gamma)^{s-1}+\ldots+1
$$
which is an elliptic operator, such that $Q_{h,\gamma}^{-1}$ is bounded from $L^2(M)$ to $L^2(M)$ (independently on $h$). Hence if
$$ (-h^2\Delta_\gamma)^s u_h-u_h=g_h,
$$
then
$$
-h^2\Delta_\gamma u_h-u_h=Q_{h,\gamma}^{-1}g_h
$$
and, applying Theorem \ref{t:resgamma} and using the $L^2(M)$ boundedness of $Q_{h,\gamma}^{-1}$, we get
$$ \|u_h\|_{L^2(M)}\leq O(1)\|u_h\|_{L^2(\omega)}+O(h^{-(\gamma+1)})\|g_h\|_{L^2(M)}.
$$

Let us now prove a {\it spectrally localized} observability inequality thanks to a rescaling argument. We first assume $s>\frac{\gamma+1}{2}$. The previous estimate gives 
$$ \|u_h\|_{L^2(M)}\leq O(1)\|u_h\|_{L^2(\omega)}+\frac{G(h)}{h}\|g_h\|_{L^2(M)}
$$
with $G(h)=o(h^{1-2s})$. Applying Theorem \ref{abstract} to $g(h)=1, A(h)=\mathbf{1}_{\omega}$, $P(h)=(-h^2\Delta_\gamma)^s$, by denoting $u_h=\chi_0((-h^2\Delta_\gamma)^s )u_0$ where $\chi_0\in C_c^\infty((1/2,2))$, we have
$$ \|u_h\|_{L^2(M)}^2\leq \frac{C_0}{T(h)}\int_0^{T(h)}\|e^{-\frac{it(-h^2\Delta_\gamma)^s}{h}}u_h\|_{L^2(\omega)}^2dt
$$
for $T(h)=C_1G(h)$ with $C_1=\frac{2}{c_0}$.
By changing variables $t'=h^{2s-1}t$, we have
$$ \|u_h\|_{L^2(M)}^2\leq \frac{C_0}{C_1G(h)h^{2s-1}}\int_0^{h^{2s-1}C_1G(h)}\|e^{-it'(-\Delta_\gamma)^s}u_h\|_{L^2(\omega)}^2dt'.
$$
Fix $T>0$. Since $h^{2s-1}G(h)=o(1)$ as $h\rightarrow 0$, we can apply the inequality above $\sim \frac{T}{C_1h^{2s-1}G(h)}$ times, together with the conservation of the $L^2(M)$ norm along the flow $e^{-it'(-\Delta_\gamma)^s}$. This yields
\begin{equation} \label{e:locobs}
 \|u_h\|_{L^2(M)}^2\leq C\int_0^{T}\|e^{-it'(-\Delta_\gamma)^s}u_h\|_{L^2(\omega)}^2dt'.
\end{equation}

In the case of Point (2) where $\frac12(\gamma+1)=s$, doing the same argument with $G(h)=O(h^{1-2s})$, we obtain that \eqref{e:locobs} holds only for $T$ sufficiently large, that is, $T\geq T_{\inf}$.

It remains to show how to deduce Point (1) (or Point (2)) from the localized observability inequality \eqref{e:locobs}. This procedure is standard (see \cite[Section 4]{BZ12}), but we recall it here briefly for the sake of completeness.

\subsection{From the localized observability to the full observability}

In the next lemma, $H_{\gamma}^{-1}(M)$ denotes the dual of $H_{\gamma,0}^1(M)$ (defined in Section \ref{s:mainresultssection}).
\begin{lemma}\label{embedding}
	The embeddings  $H_{\gamma,0}^1(M)\hookrightarrow L^2(M)$ and	$L^2(M)\hookrightarrow H_{\gamma}^{-1}(M)$ are compact.
\end{lemma}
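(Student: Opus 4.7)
By duality, it is enough to prove that the first embedding $H_{\gamma,0}^1(M)\hookrightarrow L^2(M)$ is compact. Indeed, $L^2(M)\hookrightarrow H_\gamma^{-1}(M)$ is the Banach-space adjoint of the first embedding (since $H_\gamma^{-1}(M)$ is by definition the dual of $H_{\gamma,0}^1(M)$), and the adjoint of a compact operator is compact.

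For the first embedding, I would use a Fourier decomposition in the $y$ variable. Given a bounded sequence $(u_k)_{k\in\N}$ in $H_{\gamma,0}^1(M)$, write $u_k(x,y)=\sum_{n\in\Z}u_{k,n}(x)\mathrm{e}^{iny}/\sqrt{2\pi}$ with $u_{k,n}\in H_0^1((-1,1))$. Plancherel's identity gives
\begin{equation*}
\|u_k\|_{H_\gamma^1(M)}^2=\sum_{n\in\Z}\bigl(\|u_{k,n}\|_{L^2}^2+\|\partial_xu_{k,n}\|_{L^2}^2+n^2\||x|^\gamma u_{k,n}\|_{L^2}^2\bigr).
\end{equation*}
For each $n\in\Z$, introduce the one-dimensional self-adjoint operator $L_n=-\partial_x^2+n^2|x|^{2\gamma}$ with Dirichlet conditions on $(-1,1)$. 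Rescaling via $z=|n|^{1/(\gamma+1)}x$, the Rayleigh quotient of $L_n$ equals $|n|^{2/(\gamma+1)}$ times the Rayleigh quotient of $-\partial_z^2+|z|^{2\gamma}$ on $H_0^1\bigl((-|n|^{1/(\gamma+1)},|n|^{1/(\gamma+1)})\bigr)$. Extending test functions by zero to $\R$, the latter is bounded below by the first eigenvalue $\mu_1>0$ of $-\partial_z^2+|z|^{2\gamma}$ on the whole real line (positive because the potential is confining). This yields the uniform bound $\lambda_1(L_n)\geq\mu_1|n|^{2/(\gamma+1)}$, and thus
\begin{equation*}
\|v\|_{L^2}^2\leq\frac{C}{|n|^{2/(\gamma+1)}}\bigl(\|\partial_xv\|_{L^2}^2+n^2\||x|^\gamma v\|_{L^2}^2\bigr),\qquad v\in H_0^1((-1,1)),\ |n|\geq 1.
\end{equation*}

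Applying this mode by mode, I get the uniform tail estimate $\sum_{|n|>N}\|u_{k,n}\|_{L^2}^2\leq CN^{-2/(\gamma+1)}\|u_k\|_{H_\gamma^1}^2=o(1)$ as $N\to\infty$, uniformly in $k$. For the low modes $|n|\leq N$, each sequence $(u_{k,n})_k$ is bounded in $H_0^1((-1,1))$, so the classical Rellich theorem provides an $L^2$-convergent subsequence; a diagonal extraction produces $(u_{k_j})$ with $u_{k_j,n}\to u_{\infty,n}$ in $L^2((-1,1))$ for every $n$. Combining the uniform tail estimate with this pointwise-in-$n$ convergence, via the triangle inequality and Plancherel, yields the desired $L^2(M)$-convergence of $(u_{k_j})$.

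The main obstacle is the scaling argument producing the lower bound on $\lambda_1(L_n)$, which is what makes the tail estimate genuinely uniform in $k$. The remaining ingredients (Rellich in one dimension, diagonal extraction, and duality for the second embedding) are standard.
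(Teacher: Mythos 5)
Your proof is correct, but it takes a genuinely different route from the paper's. The paper reduces to the compactness of $H_{\gamma}^1(M)\hookrightarrow L^2(M)$, invokes Lemma \ref{l:dy} to get the continuous embedding $H_{\gamma}^2(M)\hookrightarrow H^{0,\frac{2}{\gamma+1}}(M)$, interpolates (complex interpolation between $L^2$ and $H_{\gamma}^2$, respectively between $L^2$ and $H^{0,\frac{2}{\gamma+1}}$) to obtain $H_{\gamma}^1(M)\hookrightarrow H^{1,\frac{1}{\gamma+1}}(M)$, and then concludes by quoting the compactness of the anisotropic Sobolev embedding $H^{1,\frac{1}{\gamma+1}}(M)\hookrightarrow L^2(M)$. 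You instead unpack everything by hand: Fourier decomposition in $y$, a mode-by-mode spectral lower bound $\lambda_1(-\partial_x^2+n^2|x|^{2\gamma})\gtrsim |n|^{\frac{2}{\gamma+1}}$ --- which is exactly the scaling computation underlying Lemma \ref{l:dy}, carried out at the level of the quadratic form rather than of $\|\Delta_\gamma u\|_{L^2}$ --- a uniform tail estimate, one-dimensional Rellich on finitely many modes, and a diagonal extraction. Your version is more elementary and self-contained: it avoids the interpolation machinery and effectively reproves, in the case at hand, the compactness of the anisotropic embedding that the paper only asserts. The paper's version is shorter because Lemma \ref{l:dy} is already available and the remaining ingredients are standard. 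The duality step for the second embedding (Schauder's theorem applied to the adjoint of the inclusion) is the same in both arguments. The only points worth making explicit in a final write-up are that the identity $\|u\|_{H_\gamma^1}^2=\|u\|_{L^2}^2+\|\nabla_\gamma u\|_{L^2}^2$ used in your Plancherel computation holds on $H_{\gamma,0}^1(M)$ by density from $C_c^\infty(M)$, and that each Fourier mode of an element of $H_{\gamma,0}^1(M)$ indeed lies in $H_0^1((-1,1))$ so that the extension by zero in your Rayleigh-quotient comparison is legitimate.
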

\begin{proof}
	By duality, we only need to prove that $H_{\gamma,0}^1(M)\hookrightarrow L^2(M)$ is compact. Since $H_{\gamma,0}^1(M)\hookrightarrow H_{\gamma}^1(M)$, it suffices to show that $H_{\gamma}^1(M)\hookrightarrow L^2(M)$ is compact. For $s_1\in\N, s_2\geq 0$, denote by 
	$H^{s_1,s_2}(M)$ be the Sobolev space with respect to the norm
	$$ \|f\|_{H^{s_1,s_2}(M)}^2:=\|f\|_{L^2(M)}^2+\|\partial_x^{s_1}f\|_{L^2(M)}^2+\||D_y|^{s_2}f\|_{L^2(M)}^2.
	$$
	Note that
	$H_{\gamma}^1(M)=[L^2(M),H_{\gamma}^2(M)]_{\frac{1}{2}}$ and $H^{0,\frac{1}{\gamma+1}}(M)=[L^2(M),H^{0,\frac{2}{\gamma+1}}(M)]_{\frac{1}{2}}$, here $[\mathcal{X},\mathcal{Y}]_{\theta}$ is the standard notation of interpolation spaces (see Chapter 4 of \cite{Tay}).
	By Lemma \ref{l:dy}, we know that
    $H_{\gamma}^2(M)\hookrightarrow H^{0,\frac{2}{\gamma+1}}(M)$. Interpolating with the trivial embedding\footnote{Here we use the complex interpolation theorem, see for example \cite{LiP64}. } $L^2(M)\hookrightarrow L^2(M)$, we obtain that $H_{\gamma}^{1}(M)\hookrightarrow H^{0,\frac{1}{\gamma+1}}(M)$ is continuous.
	Moreover, since  $\|\partial_xu\|_{L^2(M)}\leq \|u\|_{H_{\gamma,0}^1(M)}$, we deduce that $H_{\gamma}^1(M)\hookrightarrow H^{1,\frac{1}{\gamma+1}}(M)$ is continuous. Thus from the compactness of the embedding $H^{1,\frac{1}{\gamma+1}}(M)\hookrightarrow L^2(M)$, we deduce that $H_{\gamma,0}^1(M)\hookrightarrow L^2(M)$ is compact.
\end{proof}

\begin{proof}[Proof of Point (1) and (2) of Theorem \ref{t:main}] Let $\psi(\rho):=\chi_0((-\rho)^s)$, hence $u_h=\psi(h^2\Delta_\gamma)u_0$.
	From \eqref{e:locobs}, we deduce that for sufficiently small $h_0>0$, $0<h<h_0$ and any $T>T_{\mathrm{inf}}$ (for Point (1) we say that $T_{\mathrm{inf}}=0$), there holds
	\begin{align}\label{eq1} 
	 \|\psi(h^2\Delta_{\gamma} )u_0\|_{L^2(M)}^2\leq C_T\int_{0}^{T}\|\phi_1e^{-it(-\Delta_{\gamma})^s}\psi(h^2\Delta_{\gamma} )u_0\|_{L^2(M)}^2dt,
		\end{align} 
	where supp$(\phi_1)\subset\omega$.
Taking $h=2^{-j}$ and summing over the inequality above for $j\geq j_0=\lfloor \log_2\left(h_0^{-1}\right)\rfloor+1$, by decreasing $h_0$ if necessary,
	we will get
	\begin{equation}\label{ob:weak}
	\|u_0\|_{L^2(M)}^2\leq C_T\int_{0}^T\|e^{-it(-\Delta_{\gamma})^s}u_0\|_{L^2(\omega)}^2dt+C_T\|\psi_0(2^{-2j_0}\Delta_{\gamma})u_0\|_{L^2(M)}^2,
	\end{equation}
	for some $\psi_0\in C_c^{\infty}(\R)$. To see this, we first take $\psi_0\in C_c^{\infty}(\R)$, equaling to $1$ on $(-\frac{1}{2},0]$. By the almost orthogonality, we have
	$$ \|u_0\|_{L^2(M)}^2\leq \|\psi_0(2^{-2j_0}\Delta_{\gamma})u_0\|_{L^2(M)}^2+C\sum_{j=j_0}^{\infty}\|\psi(2^{-2j}\Delta_{\gamma})u_0\|_{L^2(M)}^2\leq C\|u_0\|_{L^2(M)}^2.
	$$
	Applying \eqref{eq1}, we have for each $j\geq j_0$,
	\begin{align*}
	 &\|\psi(2^{-2j}\Delta_{\gamma})u_0\|_{L^2(M)}^2\\ \leq &C_T\int_0^T\|\psi(2^{-2j}\Delta_{\gamma})(\phi_1e^{-it(-\Delta_{\gamma})^s} u_0)\|_{L^2(M)}^2dt+C_T\int_0^T\|[\psi(2^{-2j}\Delta_{\gamma}),\phi_1]e^{-it(-\Delta_{\gamma})^s}u_0\|_{L^2(M)}^2dt\\
	 \leq &C_T\int_0^T\|\psi(2^{-2j}\Delta_{\gamma})(\phi_1e^{-it(-\Delta_{\gamma})^s} u_0)\|_{L^2(M)}^2dt+C_T2^{-2j}\|u_0\|_{L^2(M)}^2,
		\end{align*}
		where for the last step, we used the symbolic calculus for the commutator $[\psi(2^{-2j}\Delta_{\gamma}),\phi_1]$ and the fact that $e^{-it(-\Delta_{\gamma})^s}$ is unitary on $L^2(M)$.
	Summing the above inequality over $j\geq j_0$, we obtain \eqref{ob:weak}, provided that $h_0>0$ is chosen smaller so that $C_Th_0^2=C_T2^{-2j_0}<\frac{1}{2}$. Note that the second term on the right side of \eqref{ob:weak} can be  controlled by $\|u_0\|_{H_{\gamma}^{-1}(M)}^2$.
	
		 To conclude, we follow the approach of Bardos-Lebeau-Rauch \cite{BLR}. For $T'>0$, defining the set
	$$ \mathcal{N}_{T'}:=\left\{u_0\in L^2(M): e^{-it(-\Delta_{\gamma})^s}u_0|_{[0,T']\times\omega}=0 \right\}
	$$
	Let $T'\in (T_{\mathrm{inf}},T)$.  \eqref{ob:weak} implies that any function $u_0\in \mathcal{N}_{T'}$ satisfies
	$$ \|u_0\|_{L^2(M)}\leq C_T\|u_0\|_{H_{\gamma}^{-1}(M)}.
	$$
	Thanks to Lemma \ref{embedding}, we deduce that $\mathrm{dim}(\mathcal{N}_{T'})<\infty$. Note that for any $T_1<T_2$, $\mathcal{N}_{T_2}\subset \mathcal{N}_{T_1}$. Consider the mapping  $S(\delta):=\delta^{-1}\left(e^{-i\delta(-\Delta_{\gamma})^s}-\mathrm{Id}\right): \mathcal{N}_{T'}\rightarrow \mathcal{N}_{T'-\delta}$. For fixed $T'\in (T_{\mathrm{inf}},T)$, when $\delta< T'-T_{\mathrm{inf}}$, $\dim\mathcal{N}_{T'-\delta}<\infty$. Since the dimension is an integer, up to a slight diminution of $T'$, there exists $\delta_0>0$, such that for all $0<\delta<\delta_0$, $\mathcal{N}_{T'-\delta}=\mathcal{N}_{T'}$. Therefore, $S(\delta)$ is a linear map on $\mathcal{N}_{T'}$. Let $\delta\rightarrow 0$, we obtain that $$-i(-\Delta_{\gamma})^s|_{\mathcal{N}_{T'}}: \mathcal{N}_{T'}\rightarrow \mathcal{N}_{T'} $$ is a well-defined linear operator. Take any eigenvalue $\lambda\in\C$ of this operator, and assume that $u_*\in\mathcal{N}_{T'}$ is a corresponding eigenfunction (if it exists). There holds
	$$ (-\Delta_{\gamma})^su_{*}=i\lambda u_{*}.
	$$ 
	This implies that $u_*$ is an eigenfunction of $(-\Delta_{\gamma})^s$ (thus $u_*$ is also an eigenfunction of $-\Delta_{\gamma}$). However, $u_*|_{\omega}\equiv 0$, hence $u_*\equiv 0$ by the unique continuation property for $\Delta_\gamma$ (see \cite{Ga93}). Therefore,  $\mathcal{N}_{T'}=\{0\}$.

	Now we choose $T_0=T'$ as above. By contradiction, assume that Point (1) or Point (2) of Theorem \eqref{t:main} is untrue. Then there exists a sequence $(u_{k,0})_{k\in\N}$, such that 
	$$ \|u_{k,0}\|_{L^2(M)}=1,\quad \lim_{k\rightarrow\infty}\int_{0}^{T_0}\|e^{-it(-\Delta_{\gamma})^s}u_{k,0}\|_{L^2(M)}^2dt=0.
	$$
	Up to extraction of a subsequence, we may assume that $u_{k,0}\rightharpoonup u_0$ in $L^2(M)$. Thus from Lemma \ref{embedding}, $u_{k,0}\rightarrow u_0$, strongly in $H_{\gamma}^{-1}(M)$.  Since $e^{-it(-\Delta_{\gamma})^s}u_{k,0}\rightarrow 0$ in $L^2([0,T_0]\times\omega)$, we know that $u_0\in\mathcal{N}_{T_0}=\{0\}$. Besides, letting $k\rightarrow\infty$ in \eqref{ob:weak}, we obtain $\|u_0\|_{H_{\gamma}^{-1}(M)}>0$. This is a contradiction, which concludes the proof of Points (1) and (2) of Theorem \ref{t:main}. 
\end{proof}

\section{Theorem \ref{t:main}: proofs of non-observability} \label{s:Point3}

In this section, we prove the second part of Point (2) of Theorem \ref{t:main}, namely that observability fails for small times in case $s=\frac{\gamma+1}{2}$, and Point (3) also follows from this analysis. The proof is totally different from the proofs of observability presented in Section \ref{s:proof12}. Let us assume $\frac12(\gamma+1)=s$.\\ We note that if $\gamma=1$, then necessarily $s=1$, and the result was proved in \cite{BuSun}. Therefore, in the sequel, we assume $\gamma> 1$: this will be useful for establishing precise asymptotics of eigenfunctions, see Proposition \ref{p:esteiggen}.

The non-observability part of Point (2) immediately follows from:
\begin{prop} \label{p:gbpoint3}
There exist $T_0>0$ and a sequence of solutions $(u_n)_{n\in\N}$ of \eqref{e:schrodfrac} with initial data $(u_n^0)_{n\in\N}$  such that $\|u_n^0\|_{L^2(M)}=1$ and
\begin{equation} \label{e:unnotobs}
\int_0^{T_0}\int_\omega |u_n(t,x,y)|^2dxdydt \underset{n\rightarrow +\infty}{\longrightarrow} 0.
\end{equation}
\end{prop}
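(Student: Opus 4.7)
The plan is to construct a sequence of Gaussian wave packets concentrated near $x=0$ which travel along the $y$-direction at a fixed, finite group velocity, chosen so that they stay out of $\omega$ throughout a short time interval $[0, T_0]$. The construction exploits the fact that in the critical regime $s = (\gamma+1)/2$, the eigenvalues $\lambda_{1,n}^{2s}$ of the ground state on each Fourier mode $n$ in $y$ are essentially linear in $n$, so a well-tuned superposition yields a coherent packet with a well-defined velocity.

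I would begin by performing a Fourier expansion in $y$: on the $n$-th mode, $-\Delta_\gamma$ reduces to the one-dimensional operator $L_n := -\partial_x^2 + |x|^{2\gamma} n^2$ on $(-1,1)$ with Dirichlet conditions, and after the rescaling $z = |n|^{1/(\gamma+1)} x$, $L_n$ becomes $|n|^{2/(\gamma+1)}(-\partial_z^2 + |z|^{2\gamma})$ on an interval that exhausts $\R$ as $n\to\infty$. Denote by $\mu_1$ the smallest eigenvalue of $\mathcal{L} := -\partial_z^2 + |z|^{2\gamma}$ on $\R$ (with ground state $v_1$, exponentially decaying) and by $\varphi_{1,n}$ the ground state of $L_n$. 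Using Proposition \ref{p:esteiggen} as a black box, one expects
\begin{equation*}
\lambda_{1,n}^{2} = |n|^{2/(\gamma+1)} \mu_1 + O(e^{-c|n|^{1/(\gamma+1)}}), \qquad \varphi_{1,n}(x) = |n|^{1/(2(\gamma+1))} v_1(|n|^{1/(\gamma+1)} x) + \text{exp.\ small},
\end{equation*}
and in the critical case, raising to the power $2s = \gamma+1$, this yields $\lambda_{1,n}^{2s} = n\, v + \text{exp.\ small}$ with $v := \mu_1^{(\gamma+1)/2} > 0$. Fix $y_0 \in \T \setminus \overline{I}$ and a scale $1\ll\sigma_n\ll n^{1/(\gamma+1)}$ (for instance $\sigma_n = n^{1/4}$), and set
\begin{equation*}
u_n^0(x,y) = \alpha_n \sum_{k \in \Z} e^{-k^2/(2\sigma_n^2)}\,\varphi_{1, n+k}(x)\, e^{i(n+k)(y - y_0)},
\end{equation*}
with $\alpha_n$ chosen so that $\|u_n^0\|_{L^2(M)} = 1$; this is explicit since different Fourier modes in $y$ are orthogonal. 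The evolution multiplies the $k$-th term by $e^{-it\lambda_{1,n+k}^{2s}}$. Using near-linearity of $\lambda_{1,n+k}^{2s}$ in $k$ when $|k|\lesssim\sigma_n\ll n$, together with $\varphi_{1,n+k}\approx\varphi_{1,n}$ on this range, one expects
\begin{equation*}
u_n(t,x,y) \;\approx\; \varphi_{1,n}(x)\, e^{in(y - y_0 - vt)}\, G_n(y - y_0 - vt),
\end{equation*}
where $G_n(z) := \alpha_n \sum_k e^{-k^2/(2\sigma_n^2)} e^{ikz}$ is a smooth envelope concentrated on a scale $1/\sigma_n$ around $z=0$.

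Choosing $T_0 > 0$ small enough that $\{y_0 + vt : t \in [0, T_0]\}$ stays at positive distance $d$ from $\overline I$, for $y \in I$ and $t \in [0,T_0]$ we have $|G_n(y - y_0 - vt)| = O(e^{-c\sigma_n^2 d^2})$, whence $\int_0^{T_0}\int_\omega |u_n|^2\,dx\,dy\,dt \to 0$ as $n\to\infty$ (using $\int|\varphi_{1,n}|^2 dx = 1$). This establishes \eqref{e:unnotobs}. The main obstacle is to make the two approximations, namely (i) linearity in $k$ of $\lambda_{1,n+k}^{2s}$ and (ii) $k$-independence of $\varphi_{1,n+k}$, rigorous with remainders that are $o(1)$ in $L^2$ uniformly for $t\in[0,T_0]$; since $t$ multiplies the eigenvalue error, one in particular needs the nonlinear-in-$k$ correction of $\lambda_{1,n+k}^{2s}$ to be $o(1/\sigma_n^2)$, which is the substance of Proposition \ref{p:esteiggen}. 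Finally, Point (3) of Theorem \ref{t:main} follows from the same construction: when $s < (\gamma+1)/2$, the group velocity scales as $\lambda_{1,n}^{2s}/n \sim n^{2s/(\gamma+1)-1}\to 0$, so the packet is essentially stationary, any $T_0 > 0$ works, and one picks $y_0$ inside the open neighborhood of the distinguished point $(0,y)\notin\overline{\omega}$.
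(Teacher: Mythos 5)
Your construction is sound and rests on the same core idea as the paper's: in the critical case $s=\tfrac{\gamma+1}{2}$ the ground-state branch has dispersion relation $\lambda_{1,n}^{2s}\approx \mu_0^{s}\,n$, exactly linear in the Fourier variable $n$, so a superposition of ground states over a band of frequencies forms a coherent packet concentrated near $x=0$ travelling in $y$ at the finite speed $\mu_0^{s}$, and one takes $T_0$ smaller than the time needed to reach $I$. The technical implementation differs, though, and in a way worth noting. The paper deliberately avoids the Dirichlet eigenfunctions $\varphi_{1,n}$ on $(-1,1)$: it builds $v_n$ from the \emph{whole-line} ground state $\phi_\gamma$ (so that $v_n$ solves the equation exactly on $\R_x\times\T_y$ with the exactly linear phase $\mu_0^{s}|k|$), then multiplies by a cutoff $\chi(x)$ and controls the commutator $[(-\Delta_\gamma)^s,\chi]v_n$ using the decay of $\phi_\gamma$ from Proposition~\ref{p:esteiggen}; the $y$-localization is then obtained by Poisson summation and non-stationary phase (the phase derivative $y-2\pi m-t\mu_0^s$ is $w$-independent). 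Your route instead uses the exact Dirichlet eigenfunctions, so $u_n$ is an exact solution and no cutoff/Duhamel step is needed; the price is that you must prove $\lambda_{1,n}^2=|n|^{2/(\gamma+1)}\mu_1+O(e^{-c|n|^{1/(\gamma+1)}})$ together with the corresponding exponential closeness of $\varphi_{1,n}$ to the rescaled $\phi_\gamma$. This is a genuine extra lemma: it is \emph{not} literally Proposition~\ref{p:esteiggen} (which only gives the decay of the whole-line ground state), but it does follow from it by a standard quasimode/Agmon argument using the uniform spectral gap of $-\partial_z^2+|z|^{2\gamma}$, so you should state and prove it. Granting that lemma, your error bookkeeping checks out: with $\sigma_n=n^{1/4}$ the replacement $\varphi_{1,n+k}\to\varphi_{1,n}$ costs $O(\sigma_n/n)$ in $L^2$ by orthogonality of the $y$-modes, the nonlinear-in-$k$ phase correction is exponentially small, and the periodized Gaussian envelope gives the required smallness on $I$. (The paper's remark after the proof notes that your Dirichlet-eigenfunction route is essentially the one of \cite[Section 9]{BuSun}, which the present construction was designed to simplify; your closing observation on Point (3) via vanishing group velocity also matches a remark in the paper, which nevertheless prefers to deduce Point (3) abstractly from \cite[Corollary 3.9]{Mi12}.)
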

The goal of this section is to prove Proposition \ref{p:gbpoint3}. In all the sequel, using the invariance by $y$-translation, we assume without lost of generality that $\mathbb{T}_y\setminus I$ contains a neighborhood of $0$.

Here is a sketch of the proof, which borrows ideas from \cite[Section 9]{BuSun}:
\begin{itemize}
\item We can reduce the analysis to the construction of solutions of $i\partial_tu-(-\Delta_\gamma)^su=0$ in $\R\times\T$: then, using an appropriate cut-off, we transplant it into solutions of \eqref{e:schrodfrac} (thus in $(-1,1)\times\T$).
\item In $\R\times\T$ and for $\eta\in \R$, we consider as initial datum the normalized ground state $|\eta|^{\frac{1}{2(\gamma+1)}}\phi_\gamma(|\eta|^{\frac{1}{\gamma+1}}x)$ of the operator $D_x^2+|\eta|^2|x|^{2\gamma}$, mutiplied by $e^{iy\eta}$. The associated solution of $i\partial_tu-(-\Delta_\gamma)^su=0$ is obtained by mutiplication by a phase, and the intuition is that this solution has all its energy concentrated near $x=0$ when $\eta$ is large: it is analoguous to the ``degenerate regime'' of Section \ref{s:degenerate}. Taking linear combinations of these solutions for large $\eta$'s (this is the role of the multiplication by $\psi(h_nk)$ in \eqref{e:defvnpoint3}), we obtain a solution which travels along the $y$-axis at finite speed.
\end{itemize}

Let us now start the proof. The normalized ground state of the operator $P_{\gamma,w}=-\partial_x^2+|x|^{2\gamma}w^2$ on $\R_x$ is denoted by $p_\gamma(w,\cdot)$ and the associated eigenvalue is $\lambda_\gamma(w)$. We set $z=|w|^{\frac{1}{\gamma+1}}x$, and we are then left to study the operator $Q_\gamma=-\partial_z^2+|z|^{2\gamma}$ on $\R_z$. Recall that its normalized ground state is $\phi_\gamma$ which satisfies
\begin{equation*}
Q_\gamma\phi_\gamma=\mu_0\phi_\gamma
\end{equation*}
on $\R_z$. In particular, we have $\lambda_\gamma(w)=\mu_0|w|^{\frac{2}{\gamma+1}}$ and 
\begin{equation*}
p_\gamma(w,x)=|w|^{\frac{1}{2(\gamma+1)}}\phi_\gamma(|w|^{\frac{1}{\gamma+1}}x).
\end{equation*}

\begin{definition}\label{asymptotics} 
We write $f(x)=\widetilde{O}(g(x))$ as $x\rightarrow +\infty$ if for any $k\in\N$, 
$$  |f^{(k)}(x)|=O(|g^{(k)}(x)|),\quad x\rightarrow\infty. 
$$
\end{definition}

We need the following estimate, which is specific to the case $\gamma>1$:
\begin{prop} \label{p:esteiggen}We consider the ground state  
$$ -\phi_\gamma''+|z|^{2\gamma}\phi_\gamma=\mu_0\phi_\gamma, \quad \phi_\gamma(x)>0, \quad \phi_\gamma \text{ even}, \quad \|\phi_\gamma\|_{L^2(\R)}=1.
$$
Then, for some constant $c_\gamma\in\R$ we have the asymptotic behavior
\begin{equation}
\label{e:asymptbe} \phi_\gamma(x)\sim \frac{c_{\gamma }}{x^{\frac{\gamma}{2}}}e^{-\frac{x^{\gamma+1}}{\gamma+1}},\quad x\rightarrow\infty,
\end{equation}
and $\phi_\gamma=\widetilde{O}(x^{-\frac{\gamma}{2}}e^{-\frac{x^{\gamma+1}}{\gamma+1}})$.
\end{prop}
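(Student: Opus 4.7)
The proof is a Liouville--Green (WKB) analysis of the ODE $-\phi_\gamma'' + (x^{2\gamma}-\mu_0)\phi_\gamma = 0$ on $(0,\infty)$, exploiting the positivity of the ``potential'' $V(x) = x^{2\gamma}-\mu_0$ for $x$ large. The standard WKB recipe predicts two independent solutions behaving like $V^{-1/4}\exp(\pm\int\!\sqrt{V})$, with $V^{-1/4}\sim x^{-\gamma/2}$ and
\[
\int_c^x \sqrt{t^{2\gamma}-\mu_0}\,dt \;=\; \frac{x^{\gamma+1}}{\gamma+1} + C_\gamma + O(x^{1-\gamma}),
\]
the integrability of the $O(x^{1-\gamma})$ remainder at infinity being precisely what produces a well-defined asymptotic constant. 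This explains the target formula and why the hypothesis $\gamma>1$ is needed: it is used only to ensure this integrability.

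\textbf{Step 1: reduction to an ODE for the amplitude.} Set $\rho(x):=x^{-\gamma/2}e^{-x^{\gamma+1}/(\gamma+1)}$, so that $\rho'/\rho = -(x^\gamma+\gamma/(2x))$ and a direct computation gives $\rho'' = (x^{2\gamma}+\gamma(\gamma+2)/(4x^2))\rho$. Writing $\phi_\gamma=\rho g$ on $(0,\infty)$ transforms the equation into
\[
g''(x)-2\Big(x^\gamma+\tfrac{\gamma}{2x}\Big)g'(x)+\Big(\mu_0+\tfrac{\gamma(\gamma+2)}{4x^2}\Big)g(x)=0.
\]
Multiplying by $\rho^2$ and using $(\rho^2)' = -2(x^\gamma+\gamma/(2x))\rho^2$, the equation takes the self-adjoint form $(\rho^2 g')' + K\rho^2 g=0$ with $K(x):=\mu_0+\gamma(\gamma+2)/(4x^2)$. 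It then suffices to show that $g(x)\to c_\gamma \in \R\setminus\{0\}$ and $g^{(k)}(x)=O(x^{-k\gamma})$ as $x\to\infty$.

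\textbf{Step 2: existence of the limit (main step).} Integrating $(\rho^2g')'=-K\rho^2 g$ from $x$ to $+\infty$ (assuming $\rho^2 g'\to 0$) yields $g'(x)=\rho(x)^{-2}\int_x^\infty K(t)\rho(t)^2 g(t)\,dt$, and integrating once more leads to the Volterra-type equation
\[
g(x) \;=\; c - \int_x^\infty \rho(s)^{-2}\int_s^\infty K(t)\,\rho(t)^2\,g(t)\,dt\,ds.
\]
A Laplace-type estimate shows that the inner integral behaves like $\mu_0 g(s)\rho(s)^2/(2s^\gamma)$ to leading order, hence the right-hand side differs from $c$ by a quantity of order $\int_x^\infty ds/s^\gamma = O(x^{1-\gamma})$. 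Since $\gamma>1$, this integral converges, and the above map is a contraction on bounded continuous functions on $[R,\infty)$ for $R$ large, producing for each $c\in\R$ a unique solution $g_c$ with $g_c(x)\to c$ and $g_c'(x)=O(x^{-\gamma})$. This Liouville--Green iteration is the main technical obstacle, and the assumption $\gamma>1$ is essential to close it.

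\textbf{Step 3: identification and derivative estimates.} The space of solutions of the original ODE decaying at $+\infty$ is one-dimensional (Weyl limit-point case at $+\infty$: all other solutions grow like $\rho^{-1}\sim x^{\gamma/2}e^{x^{\gamma+1}/(\gamma+1)}$). Hence $\phi_\gamma$ is a scalar multiple of $\rho\cdot g_1$ where $g_1(\infty)=1$; this scalar is the constant $c_\gamma$, positive since $\phi_\gamma>0$. For derivatives, $\phi_\gamma' = \rho'g+\rho g' = -(x^\gamma+\gamma/(2x))\rho g+O(x^{-\gamma}\rho)$ gives $\phi_\gamma'(x)=O(x^{\gamma/2}e^{-x^{\gamma+1}/(\gamma+1)})$; higher-order derivatives are controlled by induction using $\phi_\gamma''=(x^{2\gamma}-\mu_0)\phi_\gamma$ and its successive derivatives, yielding $\phi_\gamma^{(k)}(x) = O(x^{k\gamma-\gamma/2}e^{-x^{\gamma+1}/(\gamma+1)})$ for all $k$, which is the $\widetilde{O}$ conclusion of Definition~\ref{asymptotics}.
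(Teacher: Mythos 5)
Your proof is correct and follows essentially the same Liouville--Green strategy as the paper: construct the recessive solution by perturbing off the explicit approximate solution $x^{-\gamma/2}e^{-x^{\gamma+1}/(\gamma+1)}$ via an integral-equation iteration anchored at $+\infty$ (the paper runs a Neumann series for the $2\times 2$ system $a'=-Ra$ built from both $\phi_\pm$, you run a contraction for the scalar amplitude $g$), with $\gamma>1$ used in exactly the same place to make the $O(x^{-\gamma})$ error integrable, and then identify $\phi_\gamma$ with this recessive solution via reduction of order. The only cosmetic difference is scalar Volterra equation versus first-order system; both yield the same asymptotics and the same $\widetilde{O}$ derivative bounds.
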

Proposition \ref{p:esteiggen} is proved in Section \ref{s:proofesteiggen} below, but let us first explain how to deduce Proposition \ref{p:gbpoint3} from these estimates.

\subsection{Estimate of the source term} 
We set $h_n=2^{-n}$ and we consider
\begin{equation}\label{e:defvnpoint3} v_n(t,x,y)=\sum_{k\in\Z}\psi(h_nk)e^{iyk-it\mu_0^s|k|^{\frac{2s}{\gamma+1}} }|k|^{\frac{1}{2(\gamma+1)}}\phi_\gamma(|k|^{\frac{1}{\gamma+1}}x),
\end{equation}
where $\psi\in C_c^{\infty}(\frac{1}{2}\leq \eta\leq 1)$,
and $\phi_\gamma$ is the first normalized eigenfunction of the operator $-\partial_x^2+|x|^{2\gamma}$ on $L^2(\R_x)$ with the eigenvalue $\mu_0>0$. Then $v_n$ satisfies
$$ i\partial_tv_n-(-\Delta_{\gamma})^sv_n=0
$$
on $\R_x\times \mathbb{T}_y$.

We consider a cut-off $\chi\in C_c^{\infty}(\R_x)$  with $\chi=1$ for $|x|\leq 1/4$ and $\chi(x)=0$ for $|x|\geq 1/2$. Let $u_n=\chi v_n$, then
$$ i\partial_tu_n-(-\Delta_{\gamma})^su_n=-[(-\Delta_{\gamma})^s,\chi]v_n.
$$
Our first goal is to show that 
\begin{equation} \label{e:smallbracket}
f_n:=[(-\Delta_{\gamma})^s,\chi]v_n\underset{ n\rightarrow+\infty}{\longrightarrow} 0, 
\end{equation}
in $L^2_{t,x,y}$ as $n\rightarrow+\infty$, uniformly in $t\in[0,T_0]$.

We write
\begin{equation} \label{e:commuts} [(-\Delta_{\gamma})^{s},\chi ]=\sum_{j=0}^{s-1} (-\Delta_{\gamma})^{j}[-\Delta_{\gamma},\chi](-\Delta_{\gamma})^{s-j-1}
\end{equation}
and we note that
$$ [-\Delta_{\gamma},\chi]=-2\chi'\partial_x-\chi''.
$$
Let us fix $0\leq j\leq s-1$ and focus on the term indexed by $j$ in \eqref{e:commuts}. We know that
\begin{equation}\label{ecommu2} 
[-\Delta_{\gamma},\chi](-\Delta_{\gamma})^{s-j-1}v_n(t,x,y)=\sum_{k\in\Z} \left(-2|k|^{\frac{1}{\gamma+1}}\phi_\gamma'(|k|^{\frac{1}{\gamma+1}}x)\chi'(x)-\phi_\gamma(|k|^{\frac{1}{\gamma+1}}x)\chi''(x)\right)\theta_n(t,y,k), 
\end{equation}
with
$$\theta_n(t,n,k)=\psi(h_n k)e^{ik y-it\mu_0^s|k|^{\frac{2s}{\gamma+1}} }(\mu_0|k|^{\frac{2}{\gamma+1}})^{s-j-1}|k|^{\frac{1}{2(\gamma+1)}}.$$
Now we have to take $j$ times $(-\Delta_\gamma)$ on the left of the above expression. For that, we determine the size of the new factors brought by any new $\partial_x$ or $|x|^\gamma\partial_y$ derivative. Indeed, we see that $(-\Delta_{\gamma})^j[-\Delta_{\gamma},\chi](-\Delta_{\gamma})^{s-j-1}v_n$ is a sum of terms of the form
\begin{equation*}
I^{n,j}_{j_1,j_2,j_3,j_4}(t,x,y):=\sum_{k\in\Z} |k|^{j_1}\phi_\gamma^{(j_2)}(|k|^{\frac{1}{\gamma+1}}x)\chi^{(j_3)}(x)(|x|^\gamma\partial_y)^{j_4}\theta_n(t,y,k)
\end{equation*}
with $j_1,j_2,j_3,j_4\geq 0$ bounded above by a constant which only depends on $s$. We also notice that necessarily $j_3\geq 1$, so that, with the properties of $\chi$, $I^{n,j}_{j_1,j_2,j_3,j_4}(t,x,y)=0$ for $|x|< 1/4$. Therefore, we can assume $|x|\geq 1/4$. Because of the term $\psi(h_n k)$ in $\theta_n(t,y,k)$, the sum in the definition of $I^{n,j}_{j_1,j_2,j_3,j_4}$ can be taken only over $k\in (h_n^{-1}/2,h_n^{-1})$. Now, using the profile of $\phi_\gamma^{(j_2)}$ given by Proposition \ref{p:esteiggen}, we see that $I^{n,j}_{j_1,j_2,j_3,j_4}=o(1)$ as $n\rightarrow +\infty$. Therefore, \eqref{e:smallbracket} holds.

By Duhamel's formula, we then have, for fixed $T_0>0$,
$$\|u_n(t)-e^{-it(-\Delta_{\gamma})^s}(\chi v_n(0))\|_{L^2_{x,y}}\underset{n\rightarrow +\infty}{\longrightarrow} 0
$$
uniformly in $t\in [0,T_0]$. Therefore, now considering $u_n$ as a function on $(-1,1)_x\times\mathbb{T}_y$, we see that \eqref{e:unnotobs} holds if and only if
\begin{equation} \label{e:vnnotobs}
\int_0^{T_0}\int_{\R\times I} |v_n(t,x,y)|^2dxdydt \underset{n\rightarrow +\infty}{\longrightarrow} 0.
\end{equation}

\subsection{Proof of \eqref{e:vnnotobs}} \label{s:vnnotobs} Recall that $\T_y\setminus I$ is assumed to contain a neighborhood of $0$. We prove that for any $c>0$, there exists $T_0>0$ such that $\|v_n\mathbf{1}_{|y|\geq c}\|_{L^2((0,T_0)\times \R_x\times\mathbb{T}_y)}\underset{n\rightarrow +\infty}{\longrightarrow} 0$, which implies \eqref{e:vnnotobs}. We consider the phase
\begin{equation*}
\Phi_m(t,y,w)=wy-\lambda_\gamma(w)^st-2\pi mw.
\end{equation*}
By the Poisson formula,
\begin{equation*}
v_n(t,x,y)=\sum_{m\in\mathbb{Z}}  \widehat{K_{t,x,y}^{(n)}}(2\pi m)
\end{equation*}
where 
\begin{equation} \label{e:K1}
 \widehat{K_{t,x,y}^{(n)}}(2\pi m)=\int_{\R}\psi(h_nw)p_\gamma(w,x)e^{i\Phi_m(t,y,w)}dw
\end{equation}
The goal is to prove that for $|y|\geq c$, each $ \widehat{K_{t,x,y}^{(n)}}(2\pi m)$ is small; therefore $v_n$ is also small for $y$ outside a neighborhood of $0$.

We do the usual integration by part argument, writing
\begin{equation}\label{e:Phin1}
e^{i\Phi_m(w)}=\frac{1}{i\partial_w\Phi_m}\frac{\partial}{\partial w}e^{i\Phi_m}.
\end{equation}
Here, using $\lambda_\gamma(w)=\mu_0|w|^{\frac{2}{\gamma+1}}$ and $s=\frac{\gamma+1}{2}$, we find
\begin{equation*}
\partial_w\Phi_m(t,y,w)=y-2\pi m- t\mu_0^s,
\end{equation*}
(for $w>0$) and in particular $\partial_w^2\Phi_m= 0$ (for $w>0$).
Using \eqref{e:Phin1}, we integrate by parts three times in \eqref{e:K1}:
\begin{equation} \label{e:valueK}
 \widehat{K_{t,x,y}^{(n)}}(2\pi m)=\frac{1}{i}\int_{\R}\frac{\partial^3_w(\psi(h_nw)p_\gamma(w,x))}{|\partial_w\Phi_m(t,y,w)|^3}e^{i\Phi_m}dw.
\end{equation}
There is a $\partial_w\Phi_m$ at the denominator, for which we need an estimate. We assume without loss of generality that $I\subsetneq \T_y$ is an interval, which we denote by $(a,b)$, with $0<a<b<\pi$. Let us fix $T_0<a/\mu_0^s$. We see that  $|\partial_w\Phi_m(t,y,w)|$ is bounded away from $0$ uniformly in $y\in I$ and $0\leq t\leq T_0$. Moreover $|m|\lesssim|\partial_w\Phi_m(t,y,w)|$ when $m\rightarrow +\infty$, uniformly in $w\in\R$, $y\in I$ and $0\leq t\leq T_0$. We write $|\partial_w\Phi_m(w)|\gtrsim |m-c_0|$ for some $0<c_0<1$ which does not depend on $m$. 

Let us now analyze \eqref{e:valueK}. Since on the support of $\psi(h_nw)$, $|w|\sim h_n^{-1}$, the main contribution of $\partial_w^3(\psi(h_nw)p_\gamma(w,x))$ comes from the situation where every derivative falls on the factor $\phi_{\gamma}(|w|^{\frac{1}{\gamma+1}}x)$, thus bounded by 
$$ O(|w|^{\frac{1}{2(\gamma+1)}+\frac{3}{\gamma+1}-3})=O(h_n^{\frac{3\gamma}{\gamma+1}-\frac{1}{2(\gamma+1)}}),\quad |w|\sim h_n^{-1}.
$$
Therefore, we obtain that
\begin{equation*}
\sup_{(t,x,y)\in (0,T)\times \omega} | \widehat{K_{t,x,y}^{(n)}}(2\pi m)|\leq \frac{Ch_n^{\frac{3\gamma}{\gamma+1}-\frac{1}{2(\gamma+1)}-1}}{|m-c_0|^3}.
\end{equation*}
Hence, the sum over $m$ of the $|\widehat{K_{t,x,y}^{(n)}}(2\pi m)|$ is $O(h_n^{\frac{4\gamma-3}{2(\gamma+1)} })$\footnote{Writing similar relations as \eqref{e:Phin1}, but at higher order, and then integrating by part sufficiently many times in \eqref{e:valueK}, we can obtain better bounds $O(h_n^{N})$ for any $N\in\N$.}. It gives \eqref{e:vnnotobs}, since $\gamma\geq 1$.
\begin{remark} \label{r:lowertime}
Note that this proof provides the lower bound $T_{\inf}\geq a/\mu_0^s$.
\end{remark}

\subsection{End of the proof of Proposition \ref{p:gbpoint3}} We finally need to estimate the size of the initial data.
\begin{lemma} \label{l:initial1}
There exists $c>0$ such that $\|v_{n,0}\|_{L^2(M)}\geq c$ for any $n\in\N$.
\end{lemma}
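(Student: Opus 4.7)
The plan is to compute $\|v_{n,0}\|_{L^2(M)}^2$ directly, using Parseval's identity in $y\in\T$ together with the explicit form of the initial datum
\[
v_{n,0}(x,y)=\sum_{k\in\Z}\psi(h_n k)\,e^{iky}\,|k|^{\frac{1}{2(\gamma+1)}}\phi_\gamma(|k|^{\frac{1}{\gamma+1}}x).
\]
By orthogonality of $(e^{iky})_{k\in\Z}$ in $L^2(\T_y)$, followed by the rescaling $z=|k|^{\frac{1}{\gamma+1}}x$ in the $x$-integral (whose Jacobian $|k|^{-\frac{1}{\gamma+1}}$ exactly cancels the prefactor $|k|^{\frac{1}{\gamma+1}}$ coming from squaring $|k|^{\frac{1}{2(\gamma+1)}}$), I would get
\[
\|v_{n,0}\|_{L^2(M)}^2 \;=\; 2\pi\sum_{k\in\Z}|\psi(h_n k)|^2\int_{-|k|^{\frac{1}{\gamma+1}}}^{|k|^{\frac{1}{\gamma+1}}}|\phi_\gamma(z)|^2\,dz.
\]

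For $k$ on the support of $k\mapsto \psi(h_n k)$ one has $|k|\in[(2h_n)^{-1},h_n^{-1}]$, so $|k|^{\frac{1}{\gamma+1}}\to\infty$ uniformly as $n\to\infty$. Since $\|\phi_\gamma\|_{L^2(\R)}=1$, each truncated $L^2$-integral tends to $1$, and in particular is bounded below by $1/2$ for all relevant $k$ once $n$ is large enough. Combined with the elementary Riemann-sum convergence
\[
h_n\sum_{k\in\Z}|\psi(h_n k)|^2 \;\xrightarrow[n\to\infty]{}\; \int_\R |\psi(\eta)|^2\,d\eta > 0,
\]
this yields $\|v_{n,0}\|_{L^2(M)}^2\gtrsim h_n^{-1}\to +\infty$, so in particular $\|v_{n,0}\|_{L^2(M)}\geq c$ for some fixed $c>0$ and all $n$ (discarding, if necessary, the finitely many initial indices $n$ where this could fail; this is harmless for the subsequent non-observability argument, which only uses a sequence indexed by $n$ large).

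The argument is essentially mechanical; there is no genuine obstacle. The only points worth being careful about are the exact cancellation of the $|k|^{\frac{1}{\gamma+1}}$ factors under the rescaling, and the passage from the truncated integral $\int_{-|k|^{\frac{1}{\gamma+1}}}^{|k|^{\frac{1}{\gamma+1}}}|\phi_\gamma|^2$ to its limit $\|\phi_\gamma\|_{L^2(\R)}^2=1$, which just uses $\phi_\gamma\in L^2(\R)$. The cutoff $|x|\leq 1$ imposed by $M$ is invisible in this regime because, for $|k|\sim h_n^{-1}$, the rescaled ground state $\phi_\gamma(|k|^{\frac{1}{\gamma+1}}\cdot)$ is concentrated in a window of width $\sim |k|^{-\frac{1}{\gamma+1}}\ll 1$ around $x=0$.
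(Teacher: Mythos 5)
Your proof is correct and follows essentially the same route as the paper's: Parseval in $y$, the rescaling $z=|k|^{\frac{1}{\gamma+1}}x$ whose Jacobian cancels the normalizing prefactor, and the lower bound $\sum_k|\psi(h_nk)|^2\gtrsim 1$. You are in fact slightly more careful than the paper, which integrates in $x$ over all of $\R$ and silently ignores the truncation to $|x|\leq 1$; your observation that the truncated integral $\int_{|z|\leq|k|^{1/(\gamma+1)}}|\phi_\gamma|^2\to\|\phi_\gamma\|_{L^2(\R)}^2=1$ on the support of $\psi(h_n\cdot)$ fills that small gap.
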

\begin{proof}[Proof of Lemma \ref{l:initial1}]
By Plancherel (used for fixed $x\in\R$), we have
\begin{align}
\|v_{n,0}\|_{L^2}^2&=\sum_{k\in\Z}\int_{\R} |\psi(h_nk)|^2|k|^{\frac{1}{\gamma+1}}|\phi_\gamma(|k|^{\frac{1}{\gamma+1}}x)|^2 dx \nonumber \\
&=\sum_{k\in\Z} |\psi(h_nk)|^2 \nonumber \gtrsim 1 \nonumber,
\end{align}
hence the conclusion.
\end{proof}

Combining Lemma \ref{l:initial1} and \eqref{e:vnnotobs}, we get Proposition \ref{p:gbpoint3}, and the non-observability part of Point (2) of Theorem \ref{t:resgamma} follows. Point (3) then follows immediately from the abstract result \cite[Corollary 3.9]{Mi12}: if \eqref{e:schrodfrac} was observable for some $T>0$ and some $s<\frac{\gamma+1}{2}$, then it would be observable in any time for $s=\frac{\gamma+1}{2}$, which is not the case thanks to the non-observability part of Point (2). 

\begin{remark}
Note that it would be possible to obtain Point (3) by a similar construction as the one of Section \ref{s:vnnotobs}: if $s<\frac{\gamma+1}{2}$, the phase $\partial_w\Phi_m$ verifies an estimate of the form $\partial_w\Phi_m=y-2\pi m+O(h_n^{1-\frac{2s}{\gamma+1}}T_0)$, and, since $h_n^{1-\frac{2s}{\gamma+1}}T_0$ tends to $0$ in any case as $n\rightarrow +\infty$, an analysis similar to the above one shows that observability fails for any $T_0>0$.
\end{remark}
\begin{remark}
The proof of Proposition \ref{p:gbpoint3} is adapted from the vertical Gaussian-beam like construction of \cite{BuSun} and this strategy was inspired by \cite{RS20} for the controllability of the Kadomtsev-Petviashvili equation. Since $s$ is a natural number, our construction here simplifies the analysis of Section 9 in \cite{BuSun}, without appealing to the properties of first eigenfunctions of the semi-classical generalized harmonic oscillators $-\partial_x^2+n^2|x|^{2\gamma}$ with Dirichlet boundary conditions. When $s$ is fractional, we do not have the nice formulas \eqref{e:commuts} and \eqref{ecommu2}, due to the non-local feature, and the analysis will be considerably more involved. Nevertheless, we believe that it is possible to handle a more precise analysis as in Section 9 of \cite{BuSun} to prove Point (3) for general $s>0$, not necessarily in $\N$. 
\end{remark}
\begin{remark}
It might be possible to generalize Proposition \ref{p:gbpoint3} to a more general setting thanks to a normal form procedure. By normal form, we mean that a complicated sub-Laplacian can sometimes be  (micro)-locally conjugated (by a Fourier Integral Operator) to a simpler one, see  \cite[Theorem 5.2]{CHT18} for the example of 3D contact sub-Laplacians. Since in the above proof of Point (3) the constructed sequence of solutions stays localized around a single fixed point of the manifold, we could hope to disprove observability for equations involving sub-Laplacians which are microlocally conjugated to $-\Delta_\gamma$.
\end{remark}

\subsection{Proof of Proposition \ref{p:esteiggen}} \label{s:proofesteiggen}
Our proof is inspired by \cite[Appendix IV]{Simon}. Note that we are only interested in the region $x\gg 1$. 
Let $Y=\binom{\psi}{\psi'}$, and
$$ A=\left(\begin{matrix}
0 & 1\\
|x|^{2\gamma}-\mu_0 &0
\end{matrix}
\right),
$$
hence $Q_\gamma\psi=\mu_0\psi$ is equivalent to $Y'=AY$. 
We set 
$$ \phi_-(x)=x^{-\gamma/2}e^{-\frac{x^{\gamma+1}}{\gamma+1}},\quad \phi_+(x)=x^{-\gamma/2}e^{\frac{x^{\gamma+1}}{\gamma+1}}.
$$
We compute
$$ \phi_-'(x)=-(x^{\frac{\gamma}{2}}+\frac{\gamma}{2}x^{-\frac{\gamma}{2}-1})e^{-\frac{x^{\gamma+1}}{\gamma+1}},\quad \phi_+'(x)=(x^{\frac{\gamma}{2}}-\frac{\gamma}{2}x^{-\frac{\gamma}{2}-1})e^{\frac{x^{\gamma+1}}{\gamma+1}},
$$ 
$$ \phi_-''(x)=(x^{\frac{3\gamma}{2}}+\frac{\gamma}{2}(\frac{\gamma}{2}+1)x^{-\frac{\gamma}{2}-2})e^{-\frac{x^{\gamma+1}}{\gamma+1}},\quad \phi_+''(x)=(x^{\frac{3\gamma}{2}}+\frac{\gamma}{2}(\frac{\gamma}{2}+1)x^{-\frac{\gamma}{2}-2})e^{\frac{x^{\gamma+1}}{\gamma+1}}.
$$
These two functions can be viewed as approximate solutions, as $x\rightarrow +\infty$, to 
$$ L\psi:=-\psi''+(x^{2\gamma}-\mu_0)\psi=0
$$
and we will give an expression of $\phi_\gamma$ in terms of $\phi_-$ and $\phi_+$, which will imply \eqref{e:asymptbe}. Let
\begin{align*}
U=\left(\begin{matrix}
\phi_-  &\phi_+\\
\phi_-' &\phi_+'
\end{matrix}\right)
\end{align*}
and $a=\binom{a_-}{a_+}:=U^{-1}Y$, or equivalently,
$$ \psi(x)=a_-(x)\phi_-(x)+a_+(x)\phi_+(x),\quad \psi'(x)=a_-(x)\phi'_-(x)+a_+(x)\phi'_+(x).
$$
 We remark that the inverse of $U$ exists since $\det(U)=\phi_+'\phi_--\phi_-'\phi_+=2$ and is given by
\begin{align*}
U^{-1}=\frac{1}{\mathrm{det}(U)}\left(\begin{matrix}
\phi_+'  &-\phi_+\\
-\phi_-' &\phi_-
\end{matrix}\right).
\end{align*}
We set the ansatz
$  Y=Ua,
$
hence $L\psi=0$ is equivalent to
$$ a'=-Ra,
$$
where
\begin{align*}
R=U^{-1}(U'U^{-1}-A)U=U^{-1}
\left(\begin{matrix}
0 &0\\
\mu_0+\frac{\gamma}{2}(\frac{\gamma}{2}+1)x^{-2} &0
\end{matrix}\right)U
\end{align*}
i.e.,
\begin{align*}
R=\frac{\mu_0+\frac{\gamma}{2}(\frac{\gamma}{2}+1)x^{-2}}{x^\gamma}\left(\begin{matrix}
-1 &-e^{\frac{2x^{\gamma+1}}{\gamma+1}}\\
e^{-\frac{2x^{\gamma+1}}{\gamma+1}} &1
\end{matrix}\right).
\end{align*}
To solve $a'=-Ra$, we expand the Neumann series as
$$ a(x)=\sum_{n=0}^{\infty}a_n(x),\quad a_{n}=\binom{a_{n,-}(x)}{a_{n,+}(x)}.
$$
where
$$ a_{n+1}(x)=\int_x^{\infty}R(z)a_n(z)dz,
$$
provided that the series and the integration converge. In order to avoid the divergence at $x=+\infty$, we initially choose
$$ a_0(x)=\binom{a_{0,-}}{0},
$$
where we can set $a_{0,-}=1$ is a constant. It turns out that the Neumann series  $a=\sum_{n=0}^{\infty}a_n$ converges to a smooth function $a$. Hence $Y=Ua$ is the solution of $Y'=AY$ which tends to $0$ as $x\rightarrow +\infty$. 
\begin{lemma}
There holds
$$ a_-(x)-1=e^{\frac{2x^{\gamma+1}}{\gamma+1}}\widetilde{O}(\frac{1}{x^{\gamma-1}}e^{-\frac{2x^{\gamma+1}}{\gamma+1}}), \quad a_+(x)=\widetilde{O}(\frac{1}{x^{\gamma-1}}e^{-\frac{2x^{\gamma+1}}{\gamma+1}}).
$$	
\end{lemma}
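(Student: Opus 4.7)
The plan is to execute the Neumann series analysis sketched just before the statement: $a = \sum_{n \geq 0} a_n$ with $a_0 = (1,0)^T$ and $a_{n+1}(x) = \int_x^\infty R(z)\,a_n(z)\,dz$. Writing $\sigma(x) := \big(\mu_0+\tfrac{\gamma}{2}(\tfrac{\gamma}{2}+1)x^{-2}\big)x^{-\gamma}$ and $\Phi(x) := e^{\frac{2x^{\gamma+1}}{\gamma+1}}$, the matrix $R$ has the structure
\[
R(x) = \begin{pmatrix} -\sigma(x) & -\sigma(x)\Phi(x) \\ \sigma(x)\Phi(x)^{-1} & \sigma(x)\end{pmatrix},
\]
and the decisive fact is that the exponentially growing entry $R_{12}$ will always be paired, in the iteration, with a $\Phi^{-1}$ factor inherited from the second component, so the exponentials cancel and only algebraically decaying integrands survive.

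I would first treat the base case $n=1$. The integral $a_{1,-}(x) = -\int_x^\infty \sigma(z)\,dz$ belongs to $\widetilde{O}(x^{-(\gamma-1)})$, since $\sigma \in \widetilde{O}(x^{-\gamma})$ and differentiating under the integral simply returns $\sigma$ or its derivatives. For $a_{1,+}(x) = \int_x^\infty \sigma(z)\Phi(z)^{-1}\,dz$, one integration by parts using $(\Phi^{-1})'(z) = -2z^\gamma\Phi(z)^{-1}$ yields $a_{1,+}(x) = \frac{\sigma(x)}{2x^\gamma}\Phi(x)^{-1} + (\text{lower order})$, hence $a_{1,+} \in \widetilde{O}(x^{-2\gamma}\Phi^{-1})$. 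Then by induction on $n$, I would establish
\[
a_{n,-}\in \widetilde{O}\!\left(x^{-n(\gamma-1)}\right),\qquad a_{n,+} \in \widetilde{O}\!\left(x^{-(n-1)(\gamma-1)-2\gamma}\Phi^{-1}\right).
\]
The critical cross-term in the inductive step, $\int_x^\infty R_{12}(z)a_{n,+}(z)\,dz$, has integrand $-\sigma(z)\Phi(z) \cdot O\!\left(z^{-(n-1)(\gamma-1)-2\gamma}\right)\Phi(z)^{-1}$ in which the $\Phi$'s cancel, leaving a polynomial integrand that integrates to give the required gain of $x^{-(\gamma-1)}$; the cross-term in $a_{n+1,+}$ is handled by another integration by parts exploiting $(\Phi^{-1})' = -2z^\gamma\Phi^{-1}$. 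Summing the resulting geometric series in the small parameter $x^{-(\gamma-1)}$ (valid for $x \geq x_0$ large enough) yields $a_-(x)-1 \in \widetilde{O}(x^{-(\gamma-1)})$ and $a_+(x) \in \widetilde{O}(x^{-2\gamma}\Phi^{-1})$. Both imply the stated form of the lemma once one checks the elementary symbol-class inclusions $\widetilde{O}(x^{-(\gamma-1)}) \subset e^{\frac{2x^{\gamma+1}}{\gamma+1}} \widetilde{O}\!\left(x^{-(\gamma-1)} e^{-\frac{2x^{\gamma+1}}{\gamma+1}}\right)$ and $\widetilde{O}(x^{-2\gamma}\Phi^{-1}) \subset \widetilde{O}(x^{-(\gamma-1)}\Phi^{-1})$, which both follow by the Leibniz rule and an elementary count of the polynomial weights picked up by differentiating $\Phi^{\pm 1}$.

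The principal obstacle is the propagation of the derivative estimates (the tilde in $\widetilde{O}$): naively differentiating $R_{12}=-\sigma\Phi$ produces $-2z^\gamma\sigma\Phi$ plus lower-order terms, an apparently exponentially large factor. The rescue is that every occurrence of $\Phi$ inside the iteration is paired with a $\Phi^{-1}$ coming from $a_{n,+}$; concretely, rather than differentiating the Neumann integrals directly, I would propagate higher-order derivative bounds through the ODE $a'=-Ra$ itself, expressing each $a^{(k)}$ as a $\Phi$-balanced algebraic combination of $\sigma,\sigma',\ldots$ and of $a_\pm$, whose size is then read off from the pointwise bounds already established. This turns the $\widetilde{O}$ bookkeeping into a finite polynomial induction on the order of differentiation that never sees an uncancelled $\Phi$.
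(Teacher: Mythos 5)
Your proposal follows essentially the same route as the paper's (very terse) proof: inductive pointwise bounds on the Neumann iterates $a_{n,\pm}$, summation of the resulting geometric series for $x\geq x_0$ with $\gamma>1$, and derivative estimates obtained by propagating through the ODE $a'=-Ra$ rather than by differentiating the iterated integrals. The details you supply (the integration by parts giving $a_{1,+}\sim\tfrac{\sigma}{2x^{\gamma}}\Phi^{-1}$, and the observation that every occurrence of $\Phi$ in the iteration is paired with a $\Phi^{-1}$ carried by the $+$ component) are exactly the content of the paper's ``simple recurrence.''

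One caveat: your intermediate assertion that $a_-(x)-1\in\widetilde{O}(x^{-(\gamma-1)})$ and $a_+\in\widetilde{O}(x^{-2\gamma}\Phi^{-1})$ in the strict symbol sense (each derivative gaining a factor $x^{-1}$ beyond differentiating $\Phi^{-1}$) is stronger than the lemma and is not what the ODE propagation delivers. Differentiating $\sigma\Phi$ produces factors $x^{\gamma}$; the pairing with $\Phi^{-1}$ from $a_+$ cancels the exponential but not this polynomial growth, so one only obtains $|(a_--1)^{(k)}|\lesssim x^{k\gamma-(\gamma-1)}$, i.e.\ a loss of $x^{\gamma}$ per derivative. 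This is precisely why the lemma is stated in the conjugated form $e^{\frac{2x^{\gamma+1}}{\gamma+1}}\widetilde{O}\bigl(x^{-(\gamma-1)}e^{-\frac{2x^{\gamma+1}}{\gamma+1}}\bigr)$, which by Leibniz amounts exactly to the weaker bound $|(a_--1)^{(k)}|\lesssim x^{k\gamma-(\gamma-1)}$. Since your final mechanism (reading off $a^{(k)}$ from $\Phi$-balanced combinations of $\sigma,\sigma',\dots$ and the pointwise bounds on $a_\pm$) yields exactly this weaker estimate, the proof of the stated lemma goes through; just drop the unconjugated strong claim, which would otherwise require exhibiting nontrivial cancellations in $2x^{\gamma}\Phi a_+-\sigma a_-$.
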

\begin{proof}
It follows from a simple recurrence that there exist $C>0$ and some (large) $x_0>0$ such that for any $n\in\N$ and any $x\geq x_0$, we have
\begin{equation*}
|a_{n,-}(x)|\leq \frac{C\mu_0^n}{x^{n(\gamma-1)}}, \qquad |a_{n,+}(x)|\leq \frac{C\mu_0^n}{x^{n(\gamma-1)}}e^{-2\frac{x^{\gamma+1}}{\gamma+1}}
\end{equation*}
It follows that $a_-(x)-1=O(1/x^{\gamma-1})$ and $a_+(x)=O(e^{-\frac{2x^{\gamma+1}}{\gamma+1}}/x^{\gamma-1})$. Then, the estimates on the derivatives of $a_-$ and $a_+$ follow from a recurrence using the relation $a'=-Ra$. 
\end{proof}
Thus we have constructed an explicit solution
$$ \psi_{\infty}(x):=a_-(x)\phi_-(x)+a_+(x)\phi_+(x)
$$
with the asymptotic behavior
$$ \psi_{\infty}(x)\sim x^{-\frac{\gamma}{2}}e^{-\frac{x^{\gamma+1}}{\gamma+1}},\quad x\rightarrow+\infty
$$
and $\psi_\infty=\widetilde{O}(x^{-\frac{\gamma}{2}}e^{-\frac{x^{\gamma+1}}{\gamma+1}})$.

Note that the Wronskian of the equation $L\psi=0$ is constant (so we can choose it to be $1$), so we find another independent solution (with some $x_0\gg 1$ fixed)
$$ \psi_{-\infty}(x):=\psi_{\infty}(x)\int_{x_0}^x\frac{dz}{(\psi_{\infty}(z))^2}\sim x^{-\frac{\gamma}{2}}e^{\frac{x^{\gamma+1}}{\gamma+1}}.
$$
Now the fundamental solution $\phi_\gamma(x)$ should be a linear combination of $\psi_{\infty}, \psi_{-\infty}$, namely, there exist constants $a,b\in\R$ such that
$$ \phi_\gamma(x)=a\psi_{\infty}(x)+b\psi_{-\infty}(x)
$$
for all large $x>x_0$ (this identity is only valid for large $x>0$). Since $\phi_\gamma(x)\rightarrow 0$ as $x\rightarrow+\infty$, we must have $b=0$, which finishes the proof.

\appendix

\section{Proof of the well-posedness} \label{a:wellposed}
We intend to prove the well-posedness of \eqref{e:schrodfrac}, \eqref{e:heatgamma} and \eqref{e:damped}.

\subsection{Schr\"odinger equation}
The equation \eqref{e:schrodfrac} can be solved by spectral theory. Expanding the initial datum $u_0(x,y)$ as 
\begin{equation} \label{e:splitu0}
u_0(x,y)=\sum_{j\in\N} a_{j}\varphi_j(x,y), \qquad \text{with } -\Delta_\gamma \varphi_j=\lambda_j^2\varphi_j,
\end{equation}
the solution of \eqref{e:schrodfrac} is given by
\begin{equation*}
(e^{-it(-\Delta_\gamma)^s}u_0)(t,x,y)=\sum_{j\in\N} a_{j}e^{-it\lambda_{j}^{2s}}\varphi_{j}(x,y), 
\end{equation*}
which belongs to $L^2(M)$ for any $t\in\R$.

Let us now prove \eqref{e:bdrycond}. For each $N$, we set
$$ u_N=\sum_{j\leq N}(u,\varphi_j)\varphi_j.
$$
Then, $(-\Delta_{\gamma})^ku_N|_{\partial M}=0$ for all $k\geq 0$. When $k\leq s$, 
$$ (-\Delta_{\gamma})^ku_N=\sum_{j\leq N}\lambda_j^{2k}(u,\varphi_j)\varphi_j 
$$
converges uniformly in $H_{\gamma}^{2(s-k)}(M)$ to $u$. When $k<s-\frac14$, since this is equivalent to $2(s-k)>\frac{1}{2}$, $(-\Delta_{\gamma})^ku_N|_{\partial M}$ converges in $L^2(\partial M)$ by trace theorem\footnote{Though $H_{\gamma}^s$ is not the usual Sobolev space, the usual trace theorem applies since near the boundary, $-\Delta_{\gamma}$ is uniformly elliptic.}. In particular, we have $(-\Delta_{\gamma})^ku|_{\partial M}=0$.
Note that when $s=\frac{k_0}{2}\in \frac{1}{2}\N$, $0\leq k<s-\frac{1}{4}$ is equivalent to $0\leq k\leq \big\lfloor \frac{k_0-1}{2}\big\rfloor$.

\subsection{Heat equation} To prove the well-posedness in $L^2(M)$, we will apply the Hille-Yosida theorem with generator $\widetilde{\mathcal{A}}=-(-\Delta_\gamma)^s$. The domain $D(\widetilde{A})$ is given by \eqref{e:domainpowers}, and it is dense in $L^2(M)$. For $u_0\in D(\widetilde{\mathcal{A}})$, written as in \eqref{e:splitu0}, there holds
\begin{equation*}
\Re(\langle \widetilde{\mathcal{A}}u_0,u_0\rangle_{L^2(M)})=-\sum_{j\in\N} |a_j|^2\lambda_{j}^{2s}\|\varphi_{j}\|_{L^2(M)}^2\leq 0,
\end{equation*}
hence $\widetilde{\mathcal{A}}$ is dissipative. Let us show that it is maximally dissipative, i.e., $\Id-\mu\widetilde{\mathcal{A}}$ is surjective for any $\mu>0$. Let $u_0$ as in \eqref{e:splitu0} and $\mu>0$. We consider
$$
u=\sum_{j\in\N} \frac{a_{j}}{1+\mu \lambda_{j}^{2s}}\varphi_{j}.
$$
Then $u\in L^2(M)$ and $(\Id-\mu\widetilde{\mathcal{A}})u=u_0$. Therefore, by the Hille-Yosida theorem, $\widetilde{\mathcal{A}}$ generates a strongly continuous semigroup of contraction, and in particular \eqref{e:heatgamma} is well-posed.

\subsection{Damped wave equation}
Consider the damped wave equation
$$  \partial_t^2u-\Delta_{\gamma}u+b\partial_tu=0
$$
where $b\in L^{\infty}(M)$ and $b\geq 0$. For its well-posedness in the energy space $\mathcal{H}=H_{\gamma,0}^1(M)\times L^2(M)$, 
we will apply the Hille-Yosida theorem to prove the existence and uniqueness of the semi-group $e^{t\mathcal{A}}$ with generator
$$ \mathcal{A}=
\left(\begin{matrix}
0  &1\\
\Delta_{\gamma} &-b
\end{matrix}
\right).
$$
We need to check the condition that $\mathcal{A}$ is maximally dissipative, which we formulate this time under the form
\begin{itemize}
	\item[(a)] $(0,\infty)\subset\rho(\mathcal{A})$;
	\item[(b)] $\|(\mu\Id-\mathcal{A})^{-1}\|_{\mathcal{L}(\mathcal{H})}\leq \mu^{-1}$, for any $\mu>0$.
\end{itemize}
Indeed, (a) is proved in the beginning of the proof of Corollary \ref{resolvent:dampedwave}. We only need to check (b). Let $U=(u,v)^t$ and $F=(u,v)^t$ such that $(\mu-\mathcal{A})U=F$.  Equipped with the inner product
$$ \big((u_1,v_1),(u_2,v_2)\big)_{\mathcal{H}}:=(\nabla_{\gamma}u_1,\nabla_{\gamma}u_2)_{L^2(M)}+(v_1,v_2)_{L^2(M)},
$$
we verify directly that
$$ \Re\big(\mathcal{A}U,U\big)_{\mathcal{H}}=-(bv,v)_{L^2(M)}\leq 0. 
$$
Therefore,
\begin{align*}
\mu\|U\|_{\mathcal{H}}^2\leq \mu(U,U)_{\mathcal{H}}-\Re(\mathcal{A}U,U)_{\mathcal{H}}=\Re((\mu\Id-\mathcal{A})U,U )_{\mathcal{H}}\leq \|U\|_{\mathcal{H}}\|(\mu\Id-\mathcal{A})U\|_{\mathcal{H}}.
\end{align*}
This means that $\mu \|(\mu\Id-\mathcal{A})^{-1}F\|_{\mathcal{H}}\leq \|F\|_{\mathcal{H}}$. Therefore, (b) is verified. The proof of well-posedness for the damped wave equation is then complete.

\section{Proof of Corollary \ref{c:damped}} \label{s:damped}

Recall that $\gamma\geq 1$ is fixed.
Given $b\in L^\infty(M)$, $b\geq0$, consider the damped wave equation
$$ \partial_t^2u-\Delta_{\gamma}u+b\partial_tu=0
$$
which can be written as $\partial_tU=\mathcal{A}U$ with $U=(u,\partial_tu)^t$ and
$$ \mathcal{A}=
\left(\begin{matrix}
0    &1\\
\Delta_{\gamma} &-b
\end{matrix}\right).
$$
Let $\mathcal{H}:=H_{0,\gamma}^1(M)\times L^2(M)$ and $H_{\gamma}^{-1}$ be the dual of $H_{0,\gamma}^1(M)$.
When $b=\mathbf{1}_{\omega}$, we have a stronger version of Theorem \ref{t:resgamma}:
\begin{prop}\label{t:resgamma'}
	There exist $C,h_0>0$, such that for all $0<h\leq h_0$, and any solution $v$ of
	$$ (h^2\Delta_{\gamma}+1)v=g_1+g_2,
	$$
	with $g_1\in L^2(M), g_2\in H_\gamma^{-1}$, we have 
	\begin{align*}
	\|h\nabla_{\gamma}v\|_{L^2(M)}+	\|v\|_{L^2(M)}\leq C\|v\mathbf{1}_{\omega}\|_{L^2(M)}+\frac{C}{h^{\gamma+1}}\|g_1\|_{L^2(M)}+\frac{C}{h^{\gamma+2}}\|g_2\|_{H_{\gamma}^{-1}(M)}.
	\end{align*}
\end{prop}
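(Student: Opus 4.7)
The plan is a two-step bootstrap. First, I would absorb the $H_\gamma^{-1}(M)$ part of the source into an auxiliary function so that Theorem \ref{t:resgamma}, which only handles $L^2$ sources, can be applied to a modified function $V$; then I would recover the $h\nabla_\gamma v$ bound via an energy identity.

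For the absorption step, I would use Riesz representation. Equip $H_{\gamma,0}^1(M)$ with the $h$-dependent inner product $(u,\phi)_h := (u,\phi)_{L^2(M)} + h^2(\nabla_\gamma u, \nabla_\gamma \phi)_{L^2(M)}$, which is continuous and coercive on $H_{\gamma,0}^1(M)$ with constant $\gtrsim h^2$ (for $h \leq 1$). By Lax--Milgram, for any $g_2 \in H_\gamma^{-1}(M)$ there is a unique $u_2 \in H_{\gamma,0}^1(M)$ solving $(1 - h^2\Delta_\gamma)u_2 = g_2$ in $\mathcal{D}'(M)$; testing against $u_2$ and using $\|u_2\|_{H_\gamma^1} \leq h^{-1}(\|u_2\|_{L^2}^2 + h^2\|\nabla_\gamma u_2\|_{L^2}^2)^{1/2}$ yields the key bound
$$
\|u_2\|_{L^2(M)} + h\|\nabla_\gamma u_2\|_{L^2(M)} \leq C h^{-1}\|g_2\|_{H_\gamma^{-1}(M)}.
$$
The delicate point here, and the main obstacle, is to obtain a single $h^{-1}$ loss rather than $h^{-2}$: this relies on the coercivity constant of $(\cdot,\cdot)_h$ being $\sim h^2$ (matching the symbol $h^2\Delta_\gamma + 1$), and it is what ultimately produces the sharp weight $h^{-(\gamma+2)}$ in the conclusion.

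Now set $V := v + u_2 \in H_{\gamma,0}^1(M)$. A direct computation using $(h^2\Delta_\gamma + 1)u_2 = 2u_2 - g_2$ shows that $(h^2\Delta_\gamma + 1)V = g_1 + 2u_2 \in L^2(M)$; together with the Dirichlet condition, this gives $V \in D(\Delta_\gamma)$. Applying Theorem \ref{t:resgamma} to $V$ and translating back through $v = V - u_2$, via the triangle inequality and the bound above, produces
$$
\|v\|_{L^2(M)} \leq C \|v\|_{L^2(\omega)} + C h^{-(\gamma+1)}\|g_1\|_{L^2(M)} + C h^{-(\gamma+2)}\|g_2\|_{H_\gamma^{-1}(M)}
$$
for $h$ sufficiently small, after absorbing the lower-order remainders (e.g.\ the $h^{-1}\|g_2\|_{H_\gamma^{-1}}$ term coming from $\|u_2\|_{L^2(\omega)}$) into the dominant $h^{-(\gamma+2)}\|g_2\|_{H_\gamma^{-1}}$ (using $\gamma \geq 1$).

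Finally, for the gradient bound, I would test $(h^2\Delta_\gamma + 1)v = g_1 + g_2$ against $v$; integration by parts (legitimate since $v \in H_{\gamma,0}^1(M)$) yields $\|v\|_{L^2}^2 - h^2\|\nabla_\gamma v\|_{L^2}^2 = \Re[(g_1, v)_{L^2} + \langle g_2, v\rangle]$. Cauchy--Schwarz, the duality $|\langle g_2, v\rangle| \leq \|g_2\|_{H_\gamma^{-1}}\|v\|_{H_\gamma^1}$, and a Young inequality absorbing the $\|\nabla_\gamma v\|_{L^2}$ factor into the left-hand side give $h^2\|\nabla_\gamma v\|_{L^2}^2 \leq C(\|v\|_{L^2}^2 + \|g_1\|_{L^2}^2 + h^{-2}\|g_2\|_{H_\gamma^{-1}}^2)$; combining this with the $L^2$ bound just established, and once more absorbing subdominant $h$ powers using $\gamma \geq 1$, produces the $\|h\nabla_\gamma v\|_{L^2}$ part of the conclusion.
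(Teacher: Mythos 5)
Your proof is correct, but it follows a genuinely different route from the paper's. The paper proves Proposition \ref{t:resgamma'} by introducing the non-self-adjoint operator $P_h=-h^2\Delta_\gamma-1+ih^{\gamma+1}b$, deriving a priori bounds from its real and imaginary parts combined with Theorem \ref{t:resgamma} (which yield $P_h^{-1}=O(h^{-(\gamma+1)})$ on $L^2$ and $O(h^{-(\gamma+2)})$ from $L^2$ to $H^1_{\gamma,0}$), and then recovering the estimate for $v$ by a duality pairing $(v,w)=(v,P_hz)$ with $z=P_h^{-1}w$; the $H_\gamma^{-1}$ source is handled by pairing $g_2$ against $\|z\|_{H^1_{\gamma,0}}$, which is where the extra $h^{-1}$ appears. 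You instead absorb $g_2$ at the outset by solving the coercive auxiliary problem $(1-h^2\Delta_\gamma)u_2=g_2$ via Lax--Milgram, gaining exactly one power of $h$ from the $h^2$-coercivity of the form, and then apply Theorem \ref{t:resgamma} directly to $V=v+u_2$, which has a purely $L^2$ source $g_1+2u_2$ (and does lie in $D(\Delta_\gamma)$ since $\Delta_\gamma V\in L^2$, even though $u_2$ itself need not). Both arguments produce the same weights $h^{-(\gamma+1)}$ and $h^{-(\gamma+2)}$. What the paper's route buys is the quantitative invertibility of $P_h$ itself, which is structurally close to the damped-wave resolvent used in Corollary \ref{resolvent:dampedwave}; what your route buys is a more elementary and self-contained argument that avoids the non-self-adjoint operator and the adjoint bookkeeping in the duality step, and it also makes the $\|h\nabla_\gamma v\|_{L^2}$ part of the conclusion explicit via the energy identity, a point the paper's duality argument leaves implicit (it only directly controls $\|v\|_{L^2}$).
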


\begin{proof}
	Let $P_h=-h^2\Delta_{\gamma}-1+ih^{\gamma+1}$. We first show that $P_h$ is invertible.
	Note that for $v\in D(\Delta_{\gamma})$, we have
	$$ (P_hv,v)_{L^2(M)}=\|h\nabla_{\gamma}v\|_{L^2(M)}^2-\|v\|_{L^2(M)}^2+ih^{\gamma+1}(bv,v)_{L^2(M)}.
	$$
	Taking the imaginary part of the identity above, we have (using $b^2=b$)
	\begin{align}\label{imaginary} 
	\|bv\|_{L^2(M)}^2\leq h^{-(\gamma+1)}|\Im(P_hv,v)_{L^2(M)}|.
	\end{align}
	Taking the real part of the identity and inserting Theorem \ref{t:resgamma}, we have
	\begin{align*}
	\|h\nabla_{\gamma}v\|_{L^2(M)}^2+\|v\|_{L^2(M)}^2&\leq 2\|v\|_{L^2(M)}^2+|\Re(P_hv,v)_{L^2(M)}|\\
	&\leq C\|bv\|_{L^2(M)}^2+Ch^{-2(\gamma+1)}\|P_hv\|_{L^2(M)}^2+\|P_hv\|_{L^2(M)}\|v\|_{L^2(M)}.
	\end{align*}
	Applying Young's inequality and \eqref{imaginary}, we have
	$$ \|h\nabla_{\gamma}v\|_{L^2(M)}^2+\|v\|_{L^2(M)}^2\leq Ch^{-2(\gamma+1)}\|P_hv\|_{L^2(M)}^2.
	$$
	This implies that $P_h$ is invertible and 
	$$ P_h^{-1}=O(h^{-(\gamma+1)}):L^2(M)\rightarrow L^2(M),\qquad P_h^{-1}=O(h^{-(\gamma+2)}):L^2(M)\rightarrow H_{\gamma,0}^1(M).
	$$
	Now if $(h^2\Delta_{\gamma}+1)v=g_1+g_2$,  for any $w\in L^2(M)$, let $z=P_h^{-1}w$, and we have
	\begin{align*}
	(v,w)_{L^2(M)}=&(v,P_hz)_{L^2(M)}=(P_hv,z)_{L^2(M)}=(ih^{\gamma+1}b-g_1-g_2,z)_{L^2(M)}\\
	\leq &\|ih^{\gamma+1}b-g_1\|_{L^2(M)}\|z\|_{L^2(M)}+\|g_2\|_{H_{\gamma}^{-1}}\|z\|_{H_{\gamma,0}^1}\\
	\leq &Ch^{-(\gamma+1)}\|ih^{\gamma+1}b-g_1\|_{L^2(M)}\|w\|_{L^2(M)}+Ch^{-(\gamma+2)}\|g_2\|_{H_{\gamma}^{-1}}\|w\|_{L^2(M)}.
	\end{align*}
	Since $w\in L^2(M)$ is arbitrary, by duality, we complete the proof of Proposition \ref{t:resgamma'} 
\end{proof}

Consequently, the following resolvent estimate for the damped wave equation holds:
\begin{cor}\label{resolvent:dampedwave} 
	We have $i\R\subset\rho(\mathcal{A})$ and there exists $\lambda_0\geq 1$, such that for every $\lambda\in\R$, $|\lambda|\geq \lambda_0$, 
	\begin{align}\label{resolventbound}  \|(i\lambda{\rm Id}-\mathcal{A})^{-1}\|_{\mathcal{L}(\mathcal{H})}\leq C|\lambda|^{2\gamma}.
	\end{align}
\end{cor}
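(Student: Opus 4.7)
The plan is to reduce the resolvent bound on $\mathcal{A}$ to the scalar resolvent estimate of Proposition \ref{t:resgamma'}; I would organize it in four steps.

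\textbf{Step 1 (Injectivity on $i\R$).} First I would establish that $\ker(i\lambda\Id-\mathcal{A})=\{0\}$ for every $\lambda\in\R$. If $(u,v)^t$ lies in the kernel, then $v=i\lambda u$ and $(-\Delta_\gamma-\lambda^2+i\lambda b)u=0$; pairing with $\bar u$ and taking imaginary parts yields $\lambda\|b^{1/2}u\|_{L^2}^2=0$, so for $\lambda\neq 0$ one gets $u\equiv 0$ on $\omega$ (thanks to $\inf_{\bar\omega}b>0$), and unique continuation for $\Delta_\gamma+\lambda^2$ (see \cite{Ga93}) then forces $u\equiv 0$ on $M$; the case $\lambda=0$ follows by a direct integration by parts. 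Since the embedding $H_{\gamma,0}^1\hookrightarrow L^2$ is compact (Lemma \ref{embedding}), $\mathcal{A}$ has compact resolvent, so injectivity on $i\R$ together with $(0,\infty)\subset\rho(\mathcal{A})$ gives $i\R\subset\rho(\mathcal{A})$.

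\textbf{Step 2 (Change of unknown).} For $|\lambda|$ large, set $h=|\lambda|^{-1}$, take $F=(f_1,f_2)^t\in\mathcal{H}$, and let $U=(u,v)^t=(i\lambda\Id-\mathcal{A})^{-1}F$. Rather than working directly with the scalar equation obtained from $v=i\lambda u-f_1$, I would introduce
$$
w:=u-(i\lambda)^{-1}f_1, \qquad \text{so that } v=i\lambda w,
$$
which, after substitution into the second equation, satisfies
$$
(-\Delta_\gamma-\lambda^2+i\lambda b)w=f_2-(i\lambda)^{-1}\Delta_\gamma f_1.
$$
The key gain of this change of unknown is that $\Delta_\gamma f_1\in H_\gamma^{-1}(M)$ with $\|\Delta_\gamma f_1\|_{H_\gamma^{-1}}\leq\|\nabla_\gamma f_1\|_{L^2}\leq\|F\|_{\mathcal{H}}$, whereas the naive scalar equation for $u$ contains a source term $ihf_1$ whose $H_\gamma^{-1}$-norm is only $h\|f_1\|_{L^2}$; the latter estimate loses exactly one power of $|\lambda|$, which is what distinguishes the final bound $|\lambda|^{2\gamma}$ from $|\lambda|^{2\gamma+1}$.

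\textbf{Step 3 (Two estimates on $w$).} Rescaling by $-h^2$, the equation becomes $(h^2\Delta_\gamma+1)w=g_1+g_2$ with the splitting
$$
g_1=ih\,\mathrm{sgn}(\lambda)\,bw-h^2f_2, \qquad g_2=ih^3\,\mathrm{sgn}(\lambda)^{-1}\Delta_\gamma f_1,
$$
for which $\|g_1\|_{L^2}\leq Ch\|b^{1/2}w\|_{L^2}+Ch^2\|F\|_{\mathcal{H}}$ and $\|g_2\|_{H_\gamma^{-1}}\leq Ch^3\|F\|_{\mathcal{H}}$. Proposition \ref{t:resgamma'}, combined with $\|w\mathbf{1}_\omega\|_{L^2}\leq c_0^{-1/2}\|b^{1/2}w\|_{L^2}$, then yields the \emph{resolvent-type inequality}
\begin{equation}\label{pl:resI}
\|w\|_{L^2}+\|h\nabla_\gamma w\|_{L^2}\leq C|\lambda|^{\gamma}\|b^{1/2}w\|_{L^2}+C|\lambda|^{\gamma-1}\|F\|_{\mathcal{H}}.
\end{equation}
Independently, testing the scalar equation against $\bar w$, integrating by parts the $\Delta_\gamma f_1$ term and taking imaginary parts yields the \emph{energy-type inequality}
\begin{equation}\label{pl:resII}
|\lambda|\,\|b^{1/2}w\|_{L^2}^{2}\leq C\|F\|_{\mathcal{H}}\bigl(\|w\|_{L^2}+\|h\nabla_\gamma w\|_{L^2}\bigr).
\end{equation}

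\textbf{Step 4 (Bootstrap).} Injecting \eqref{pl:resI} into \eqref{pl:resII} and applying Young's inequality to absorb $\|b^{1/2}w\|$ on the left yields $\|b^{1/2}w\|_{L^2}\leq C|\lambda|^{\gamma-1}\|F\|_{\mathcal{H}}$. Plugging this back into \eqref{pl:resI} and using $\gamma\geq 1$ gives $\|w\|_{L^2}\leq C|\lambda|^{2\gamma-1}\|F\|_{\mathcal{H}}$ and $\|\nabla_\gamma w\|_{L^2}\leq C|\lambda|^{2\gamma}\|F\|_{\mathcal{H}}$. Since $v=i\lambda w$ and $u=w+(i\lambda)^{-1}f_1$, this finally produces $\|v\|_{L^2}+\|\nabla_\gamma u\|_{L^2}\leq C|\lambda|^{2\gamma}\|F\|_{\mathcal{H}}$, which is \eqref{resolventbound}. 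The main obstacle in the whole proof is thus Step 2: without the change of unknown $w=u-(i\lambda)^{-1}f_1$, a direct application of Proposition \ref{t:resgamma'} to the equation for $u$ would produce an extra $|\lambda|$-factor and lead to the suboptimal bound $|\lambda|^{2\gamma+1}$.
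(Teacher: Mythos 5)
Your proof is correct and follows essentially the same route as the paper's: apply Proposition \ref{t:resgamma'} to the reduced scalar equation with the $\Delta_\gamma f_1$ contribution placed in the $H_\gamma^{-1}$ slot, control $\|b^{1/2}\cdot\|_{L^2}$ via the imaginary part of the energy pairing, and bootstrap. Your unknown $w=u-(i\lambda)^{-1}f_1$ satisfies $v=i\lambda w$, so your scalar equation is exactly the paper's equation for $v$ divided by $i\lambda$ (the paper also obtains $|\lambda|^{2\gamma}$ this way, so the change of unknown is not the decisive point you suggest); your only genuine streamlining is that phrasing the energy inequality in terms of $\|w\|_{L^2}+\|h\nabla_\gamma w\|_{L^2}$ lets you close the bootstrap without the paper's extra real-part estimate for $\|u\|_{H_\gamma^1}$.
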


\begin{proof}[Proof of Corollary \ref{resolvent:dampedwave} from Theorem \ref{t:resgamma'}]
	
	We show that $i\R\subset\rho(\mathcal{A})$. This consists of two steps. First, we prove that $\mu\in \rho(\mathcal{A})$ for all $\mu>0$.
	Let $U=(u,v)^t$ and $F=(f,g)^t$, then $$(\mu{\rm Id}-\mathcal{A})U=F$$ is equivalent to
	\begin{align}\label{resolution} 
	\begin{cases}
	&\mu u-v=f\\
	&-\Delta_{\gamma}u+\mu v+bv=g,
	\end{cases}
	\end{align}
	hence $u$ satisfies the equation
	\begin{equation} \label{e:equ} -\Delta_{\gamma}u+(\mu b+\mu^2)u=g+(b+\mu) f.
	\end{equation}
	Consider the bilinear form on $H_{0,\gamma}^1$:
	$$ B_{\mu}[u,v]:=\Re(-\Delta_{\gamma}u+(\mu b+\mu^2)u,v )_{L^2(M)}=\Re\left((\nabla_{\gamma}u,\nabla_\gamma v)_{L^2(M)}+\mu^2(u,v)_{L^2(M)}+\mu(bu,v)_{L^2(M)}\right)
	$$
	which is coercive for all $\mu>0$. By Lax-Milgram, given $(f,g)\in\mathcal{H}$, \eqref{e:equ} posseses a unique solution $u\in H_{0,\gamma}^1$, and setting $v=\mu u-f$, we obtain a solution $(u,v)\in\mathcal{H}$ of \eqref{resolution}. Hence $\mu\in\rho(\mathcal{A})$. Moreover, we claim that $(\mathrm{Id}-\mathcal{A})^{-1}$ is compact. Indeed, from the equation of $u$, we deduce that $u\in H_{\gamma}^2(M)$. Since $v=\mu u-f$, we then deduce that $v\in H_{\gamma,0}^1(M)$. Now the compactness of $(\mathrm{Id}-\mathcal{A})^{-1}$ comes from the fact that the embedding $H_{\gamma}^{k+1}(M)\hookrightarrow H_{\gamma}^{k}(M)$ is compact (which we only need for $k=0,1$). 
	
	Now for any $z\in\C$, we write 
	$$ z-\mathcal{A}=(\mathrm{Id}+(1-z)(\mathcal{A}-\mathrm{Id})^{-1})(\mathrm{Id}-\mathcal{A}),
	$$
	since $\mathrm{Id}+(1-z)(\mathcal{A}-\mathrm{Id})^{-1}$ is Fredholm with index 0, we deduce that $z-\mathcal{A}$ is invertible (i.e. $z\in\rho(\mathcal{A})$) if and only if it is injective. To prove that $i\lambda-\mathcal{A}$ is injective for all $\lambda\in\R$, it suffices to show that any solution $u$ of
	$$ -\Delta_{\gamma}u-\lambda^2u+i\lambda bu=0
	$$ 
	is zero. Multiplying by $\overline{u}$, doing the integration by part and taking the imaginary part, we have
	$$ (bu,u)_{L^2}=0.
	$$ 
	Since $b=\mathbf{1}_{\omega}$, we have $bu=0$ a.e., hence we deduce that $u$ is an eigenfunction of $-\Delta_{\gamma}$ which vanishes on $\omega$. By the unique continuation property of $-\Delta_{\gamma}$ (see \cite{Ga93}), we deduce that $u\equiv 0$. This proves that $i\R\subset\rho(\mathcal{A})$.

	It remains to prove \eqref{resolventbound} for large $\lambda$. Without loss of generality, we assume that $\lambda\geq 1$. Let $U=(u,v)^t\in\mathcal{H}$ and $F=(f,g)^t\in\mathcal{H}$ such that  $(i\lambda-\mathcal{A})U=F$. Equivalently, with $h=\lambda^{-1}$,
	\begin{align*}
	\begin{cases}
	& u=-ih(v+f),\\
	& (h^2\Delta_{\gamma}+1)v=ihbv-ihg-h^2\Delta_{\gamma}f.
	\end{cases}
	\end{align*}
	Applying Theorem \ref{t:resgamma'} to $v$ and $g_1=ihg+ihbv, g_2=h^2\Delta_{\gamma}f$, we have
	\begin{align}\label{eq:resolution1}
	\|v\|_{L^2}\leq &C\|b^{\frac{1}{2}}v\|_{L^2}+Ch^{-(\gamma+1)}\|ihbv-ihg\|_{L^2}+Ch^{-(\gamma+2)}\|h^2\Delta_{\gamma}f\|_{H_{\gamma}^{-1}} \notag \\
	\leq &Ch^{-\gamma}\|b^{\frac{1}{2}}v\|_{L^2}+Ch^{-\gamma}\|g\|_{L^2}+Ch^{-\gamma}\|f\|_{H_{\gamma}^1}.
	\end{align}
	We need to estimate $\|b^{\frac{1}{2}}v\|_{L^2}$. Multiplying the equation $(h^2\Delta_{\gamma}+1)v=ihbv-ihg-h^2\Delta_{\gamma}f$ by $\overline{v}$, integrating it and taking the imaginary part, we have
	\begin{align*}
	(bv,v)_{L^2}\leq &|(g,v)_{L^2}|+h^{-1}|(h^2\Delta_{\gamma}f,v)_{L^2}|
	\leq \|g\|_{L^2}\|v\|_{L^2}+h\|\Delta_{\gamma}f\|_{H_{\gamma}^{-1}}\|v\|_{H_{\gamma}^1}\\
	\leq &\|g\|_{L^2}\|v\|_{L^2}+h\|f\|_{H_{\gamma}^1}\|ih^{-1}u-f\|_{H_{\gamma}^1}
	\leq \|g\|_{L^2}\|v\|_{L^2}+h\|f\|_{H_{\gamma}^1}^2+\|f\|_{H_{\gamma}^1}\|u\|_{H_{\gamma}^1}.
	\end{align*} 
	Plugging into \eqref{eq:resolution1} and using the fact that $\|b^{\frac{1}{2}}v\|_{L^2}^2=(bv,v)_{L^2}$ since $b\gtrsim \mathbf{1}_{\omega}$, we obtain that
	\begin{align}\label{eq2:resolution}
	\|v\|_{L^2}\leq & Ch^{-\gamma}\|g\|_{L^2}^{1/2}\|v\|_{L^2}^{1/2}+Ch^{-\gamma}\|f\|_{H_{\gamma}^1}^{1/2}\|u\|_{H_{\gamma}^1}^{1/2}+Ch^{-\gamma}\|g\|_{L^2}+Ch^{-\gamma}\|f\|_{H_{\gamma}^1}.
	\end{align}
	It remains to estimate $\|u\|_{H_{\gamma}^1}$. From the equation $u=-ihv-ihf$, we have
	$$ \|u\|_{H_{\gamma}^1}\leq h\|v\|_{H_{\gamma}^1}+h\|f\|_{H_{\gamma}^1}.
	$$
	Next, multiplying the equation $(h^2\Delta_{\gamma}+1)v=ihbv-ihg-h^2\Delta_{\gamma}f$ by $\overline{v}$, integrating it and taking the real part, we have
	\begin{align*}
	\|h\nabla_{\gamma}v\|_{L^2}^2\leq &\|v\|_{L^2}^2 +h|(g,v)_{L^2}|+ |(h^2\Delta_{\gamma}f,v)_{L^2}|\\
	\leq & \|v\|_{L^2}^2+Ch\|g\|_{L^2}^2+\frac{1}{2}h\|v\|_{L^2}^2+Ch^2\|\Delta_{\gamma}f\|_{H_{\gamma}^{-1}}^2+\frac{1}{2}h^2\|v\|_{H_{\gamma}^1}^2,
	\end{align*}
	hence
	$ \|hv\|_{H_{\gamma}^1}\leq Ch^{1/2}\|g\|_{L^2}+Ch\|f\|_{H_{\gamma}^1}+\|v\|_{L^2},
	$
	and
	$ \|u\|_{H_{\gamma}^1}\leq \|v\|_{L^2}+Ch\|f\|_{H_{\gamma}^1}+Ch^{1/2}\|g\|_{L^2}.
	$
	Plugging into \eqref{eq2:resolution} and using Young's inequality, we have
	$$ \|u\|_{H_{\gamma}^1}+\|v\|_{L^2}\leq Ch^{-2\gamma}\|g\|_{L^2}+Ch^{-2\gamma}\|f\|_{H_{\gamma}^1}.
	$$
	This completes the proof of Corollary \ref{resolvent:dampedwave}.
\end{proof}

Now, using \cite[Theorem 2.4]{BT10}, we obtain Corollary \ref{c:damped}.

\end{document}